\newtheorem{thm}{Theorem}[section]
\newtheorem{theorem}[thm]{Theorem}
\newtheorem{corollary}[thm]{Corollary}
\newtheorem{lemma}[thm]{Lemma}
\newtheorem{proposition}[thm]{Proposition}
\newtheorem{conjecture}[thm]{Conjecture}
\newtheorem{thm-dfn}[thm]{Theorem-Definition}
\newtheorem{example}[thm]{Example}
\newtheorem{definition}[thm]{Definition}
\newtheorem{remark}[thm]{Remark}
\numberwithin{equation}{section}
\newcommand{\nc}{\newcommand}
\newcommand{\cW}{{\mathcal W}}
\newcommand{\cO}{{\mathcal O}}
\newcommand{\cA}{{\mathcal A}}
\newcommand{\cF}{{\mathcal F}}
\newcommand{\cN}{{\mathcal N}}
\newcommand{\cH}{{\mathcal H}}
\newcommand{\cT}{{\mathcal T}}
\newcommand{\cP}{{\mathcal P}}
\newcommand{\cL}{{\mathcal L}}
\newcommand{\cE}{{\mathcal E}}
\newcommand{\cM}{{\mathcal{M}}}
\newcommand{\cG}{{\mathcal{G}}}
\newcommand{\cK}{{\mathcal{K}}}
\newcommand{\bC}{{\mathbb C}}
\newcommand{\bZ}{{\mathbb Z}}
\newcommand{\SO}{\mathrm{SO}}
\newcommand{\SL}{\mathrm{SL}}
\newcommand{\GL}{\mathrm{GL}}
\newcommand{\La}{{\mathfrak{a}}}
\newcommand{\Lg}{{\mathfrak g}}
\newcommand{\Ln}{{\mathfrak{n}}}
\newcommand{\Ll}{{\mathfrak{l}}}
\newcommand{\Lp}{{\mathfrak{p}}}
\newcommand{\fg}{{\mathfrak g}}
\newcommand{\fl}{{\mathfrak{l}}}
\newcommand{\fp}{{\mathfrak{p}}}
\newcommand{\fm}{{\mathfrak{m}}}
\newcommand{\fF}{{\mathfrak{F}}}
\nc{\II}{\mathrm{II}}
\nc{\I}{\mathrm{I}}
\newcommand{\p}{\perp}
\newcommand{\la}{\langle}
\newcommand{\ra}{\rangle}
\nc{\ot}{\otimes}
\nc{\on}{\operatorname}
\nc{\oh}{{\operatorname{H}}}
\nc{\gr}{{\operatorname{gr}}}
\nc{\rk}{{\operatorname{rank}}}
\nc{\codim}{{\operatorname{codim}}}
\nc{\img}{{\operatorname{Im}}}
\nc{\IC}{{\operatorname{IC}}}
\nc{\lp}{{\left(}}
\nc{\rp}{{\right)}}
\newcommand{\beqn}{\begin{equation*}}
\newcommand{\eeqn}{\end{equation*}}
\newcommand{\beq}{\begin{equation}}
\newcommand{\eeq}{\end{equation}}
\newcommand{\bern}{\begin{eqnarray*}}
\newcommand{\eern}{\end{eqnarray*}}
\begin{document}

\title{Springer correspondence for the split symmetric pair in type $A$}

\author{Tsao-Hsien Chen}
\email{chenth@math.uchicago.edu}
        \address{Department of Mathematics, University of Chicago, Chicago, 60637, USA}
        
        \thanks{Tsao-Hsien Chen was supported in part by the AMS-Simons travel grant and the NSF grant DMS-1702337.}
        \author{Kari Vilonen}
        \email{kari.vilonen@unimelb.edu.au}
        \address{School of Mathematics and Statistics, University of Melbourne, VIC 3010, Australia, and Department of Mathematics and Statistics, University of Helsinki, Helsinki, 00014, Finland}
         
         \thanks{Kari Vilonen was supported in part by  the ARC grants DP150103525 and DP180101445, the Academy of Finland, the Humboldt Foundation, the Simons Foundation,  and the NSF grant DMS-1402928.}
         \author{Ting Xue}
         \email{ting.xue@unimelb.edu.au}

         \address{ School of Mathematics and Statistics, University of Melbourne, VIC 3010, Australia, and Department of Mathematics and Statistics, University of Helsinki, Helsinki, 00014, Finland}
         \thanks{Ting Xue was supported in part by the ARC grants DE160100975, DP150103525 and the Academy of Finland.}

\subjclass[2010]{20G99, 14L99.}
\keywords{Springer correspondence, nearby cycle sheaves, symmetric pairs, Hecke algebras, Hessenberg varieties, Fourier transform.}

\begin{abstract}
In this paper we establish Springer correspondence for the symmetric pair $(\SL(N),\SO(N))$ using Fourier transform, parabolic induction functor, and a nearby cycle sheaf construction.
As an application of our results we see that the cohomology of Hessenberg varieties can be expressed in terms of  irreducible representations of Hecke algebras of symmetric groups at $q = -1$. Conversely, we see that the irreducible representations of Hecke algebras of symmetric groups at $q = -1$ arise in geometry.
\end{abstract}

\maketitle

\section{Introduction}

In this paper we consider the Springer correspondence in the case of symmetric spaces. We will concentrate on the split case of type $A$, i.e., the case of $\SL(n,\mathbb{R})$. The case of $\SL(n,\mathbb{H})$ was considered by Henderson in~\cite{H}, Grinberg in~\cite{G2} and Lusztig in \cite{L5}, and the case of $\mathrm{U}(p,q)$ was considered by Lusztig in~\cite{Lu3} where he treats the general case of semi-simple inner automorphisms. In both of these cases Springer theory closely resembles the classical situation. This turns out not to be so in the split case we consider here. In \cite{CVX1,CVX2} we have computed Fourier transforms of IC sheaves supported on certain nilpotent orbits using resolutions of singularities of nilpotent orbit closures. In this paper we study the problem in general in the split case of type $A$ replacing the resolutions with a nearby cycle sheaf construction in~\cite{GVX} based on earlier ideas of Grinberg \cite{G1,G2}. 

Let us call an irreducible IC sheaf supported on a nilpotent orbit  a nilpotent orbital complex. We show that the Fourier transform gives a bijection between nilpotent orbital complexes and certain representations of (extended) braid groups. We identify these representations of (extended) braid groups and construct them explicitly in terms of  irreducible representations of 
Hecke algebras of symmetric groups at $q=-1$. This bijection can be viewed as Springer correspondence for the symmetric pair $(\SL(N),\SO(N))$. Let us note that the fact that representations of (affine) Hecke algebras at $q=-1$ arise in this situation was already observed by Grojnowski in his thesis~\cite{Gr}. 

The proof of our main result, Theorem~\ref{main theorem}, makes use of 
a nearby cycle sheaf construction in~\cite{GVX} and smallness property of  
maps associated to certain $\theta$-stable parabolic subgroups.
The nearby cycle sheaves 
produce nilpotent orbital complexes whose Fourier transforms have full support.
Those IC sheaves behave like ``cuspidal sheaves'' in the sense that 
they do not appear as direct summands of parabolic inductions. 
On the other hand, the 
smallness property mentioned above implies a simple description of the images of 
parabolic induction functors 
(Proposition \ref{small}, Proposition \ref{prop-induction2}). Those results together with 
a counting lemma (Lemma \ref{lemma-count})
imply Theorem \ref{main theorem}. 
As corollaries,
we obtain criteria for nilpotent orbital complexes to have full support Fourier transforms (Corollary \ref{coro-full supp1}, Corollary \ref{coro-full supp2}) and results on cohomology of 
Hessenberg varieties (Theorem \ref{coh of Hess}). 

Our method appears to be applicable to
general symmetric pairs and, more generally, polar representations studied in \cite{G2}
and we
hope to return to this in future work. 

Let us mention that in \cite{LY}, the authors 
show that one can obtain all nilpotent orbital complexes 
using 
spiral induction functors introduced in \cite{LY} (in fact, they consider more general 
setting of cyclically graded Lie algebras). 
Using their results and Theorem \ref{main theorem}, we show that 
all irreducible representations of Hecke algebras of symmetric groups at $q=-1$
appear in the intersection cohomology of
of Hessenberg varieties, with coefficient in certain local systems 
(see Theorem \ref{geometric rel of rep}). This gives geometric constructions 
of irreducible representations of Hecke algebras of symmetric groups at $q=-1$ and provides them with a Hodge structure.

The paper is organized as follows. In Section~\ref{sec-pre} we 
recall some facts about symmetric pairs and  
introduce a class of 
representations of equivariant fundamental groups. Moreover, we recall the definition of the parabolic induction functor. 
In Section~\ref{sec-parabolic} we study parabolic induction functors for certain $\theta$-stable 
parabolic subgroups. In Section~\ref{sec-FT}, we prove Theorem \ref{main theorem}:
the Fourier transform defines a bijection between the set of nilpotent 
orbital complexes and the class of representations of equivariant fundamental groups 
introduced in Section~\ref{sec-pre}.  In Section~\ref{sec-Hess}
and Section~\ref{Rep}, we discuss applications of our results to 
cohomology of Hessenberg varieties and representations of Hecke algebras of symmetric groups  
at $q=-1$.
Finally, in Section~\ref{sec-conj}, we propose 
a conjecture that gives a more precise description of the bijection in Theorem \ref{main theorem}.

{\bf Acknowledgements.}
We thank Misha Grinberg for helpful conversations. TC thanks Cheng-Chiang Tsai for useful discussions and thanks the Institute 
of Mathematics Academia Sinica in Taipei for support, hospitality, and a nice research environment. TX thanks Syu Kato for helpful discussions. Furthermore, KV and TX thank the Research Institute for Mathematical Sciences in Kyoto  for support, hospitality, and a nice research environment. We also would like to thank the referee for carefully reading our manuscript and for helpful comments.

\section{Preliminaries}\label{sec-pre}
For convenience we work over $\bC$. We adopt the usual convention of cohomological degrees for perverse sheaves by having them be symmetric around 0. We also use the convention that all functors are derived, so we write, for example, $\pi_*$ instead of $R\pi_*$. If $X$ is smooth we write $\bC_X[-]$ for the constant sheaf placed in degree $-\dim X$ so that $\bC_X[-]$ is perverse. If $U\subset X$ is a smooth open dense subset of a variety $X$ and $\mathcal{L}$ is a local system on $U$, we write $\IC(X,\mathcal{L})$ for the IC-extension of $\cL[-]$ to $X$; in particular, it is perverse. For simplicity of notation, when we have a pair $(\cO,\cE)$, where $\cO$ is an orbit and $\cE$ is a local system on $\cO$, we also write $\IC(\cO,\cE)$ instead of $\IC(\bar\cO,\cE)$. 

\subsection{Notations} For $e\geq 2$, a partition $\lambda$ of a positive integer $k$ is called $e$-regular if the multiplicity of any part of $\lambda$ is {\em less than} $e$. In particular, a partition is $2$-regular if and only if it has distinct parts. Let us denote by $\cP(k)$ the set of all partitions of $k$ and by $\cP_2(k)$ the set of all $2$-regular  partitions of $k$.

We denote by $\cH_{k,-1}$ the Hecke algebra of the symmetric group $S_k$ with parameter $-1$. More precisely, $\cH_{k,-1}$ is the $\bC$-algebra generated by $T_i,\ i=1,\ldots,k-1$, with the following relations
\beq\label{hecke relations}
\begin{gathered}
T_iT_j=T_jT_i\text{ if }|i-j|\geq 2,\,i,j\in[1,k-1],\ T_iT_{i+1}T_i=T_{i+1}T_iT_{i+1},\,i\in[1,k-2],\\
T_i^2=q+(q-1)T_i,\text{ where }q=-1,\, i\in[1,k-1].
\end{gathered}
\eeq
It is shown in \cite{DJ} that the set of isomorphism classes of irreducible representations of $\cH_{k,-1}$ is parametrized by $\cP_2(k)$.  For $\mu\in\cP_2(k)$, we write $D_\mu$ for the irreducible representation of $\cH_{k,-1}$ corresponding to $\mu$.

For a real number $a$, we write $[a]$ for its integer part.

\subsection{The split symmetric pair \texorpdfstring{$(\SL(N),\SO(N))$}{lg}}Let $G=\SL(N)$ and $\theta:G\to G$ an involution such that $K=G^\theta=\SO(N)$ and write $\Lg=\on{Lie}G$. We have $\Lg=\Lg_0\oplus\Lg_1$, where $\theta|_{\Lg_i}=(-1)^i$. The pair $(G,K)$ is a split symmetric pair, i.e., there exists a maximal torus $A$ of $G$ that is $\theta$-split, where $\theta$-split means that for all $x\in A$, $\theta(x)=x^{-1}$.  We also think of the pair $(G,K)$ concretely as $(\SL(V),\SO(V))$, where $V$ is a vector space of dimension $N$ equipped with a non-degenerate quadratic form $Q$ such that $\SO(V)=\SO(V,Q)$. We write the non-degenerate bilinear form associated to $Q$ as $\langle\ ,\,\rangle$. 

Let $\Lg^{rs}$ denote the set of regular semisimple elements in $\Lg$ and let $\Lg_1^{rs}=\Lg_1\cap\Lg^{rs}$. Similarly, let $\Lg^{reg}$ denote the set of regular elements in $\Lg$ and let $\Lg_1^{reg}=\Lg_1\cap\Lg^{reg}$.

Let $\cN$ be the nilpotent cone of $\Lg$ and let $\cN_1=\cN\cap\Lg$. When $N$ is odd, the set of $K$-orbits in $\cN_1$ is parametrized by $\cP(N)$. When $N$ is even, the set of $\mathrm{O}(N)$-orbits in $\cN_1$  is parametrized by $\cP(N)$, moreover, each $\mathrm{O}(N)$-orbit remains one $K$-orbit if $\lambda$ has at least one odd part, and splits into two $K$-orbits otherwise. For $\lambda\in\cP(N)$, we write $\cO_\lambda$ for the corresponding nilpotent $K$-orbit in $\cN_1$ when $\lambda$ has at least one odd part, and write $\cO_{\lambda}^{\I}$ and $\cO_\lambda^{\II}$ for the corresponding two nilpotent $K$-orbits in $\cN_1$ when $\lambda$ has only even parts.  Suppose that $\lambda=(\lambda_1\geq\lambda_2\geq\cdots\geq\lambda_s>0)$. 
For $x\in\cO_\lambda$ (or $\cO_{\lambda}^{\omega}$, $\omega=\I,\II$), we have (see  \cite{S})
\beq\label{dim-centralizer}
\dim Z_K(x)=\sum_{i=1}^s(i-1)\lambda_i.
\eeq

Let $\La$ be a maximal abelian subspace of $\Lg_1$, which is also a Cartan subspace of $\Lg$. We have the ``little" Weyl group 
\beqn
W=N_K(\La)/Z_K(\La)=S_N.
\eeqn

\subsection{Equivariant fundamental group and its representations}\label{ssec-representations of tB}

As was discussed in \cite{CVX1}, the equivariant fundamental group 
\beqn
\pi_1^K(\Lg_1^{rs})\cong Z_K(\La)\rtimes B_N\cong(\bZ/2\bZ)^{N-1}\rtimes B_{N},
\eeqn
where $B_N$ is the braid group of $N$ strands and it acts on 
\beqn
Z_K(\La)\cong\{(i_1,\ldots,i_{N})\in (\bZ/2\bZ)^{N}\,|\,\sum_{k=1}^N i_k=0\}\cong (\bZ/2\bZ)^{N-1}
\eeqn
via the natural map $B_{N}\to S_{N}$. For simplicity we write 
\beqn
\text{$\widetilde{B}_{N}=(\bZ/2\bZ)^{N-1}\rtimes B_{N}$ and $I_{N}=(\bZ/2\bZ)^{N-1}$.}
\eeqn

It is easy to see that the action of $B_N$ on $I_{N}^\vee$ has $[N/2]+1$ orbits. We choose a set of representatives 
\beqn
\chi_m\in I_N^\vee,\ 0\leq m\leq[N/2],
\eeqn of the $B_N$-orbits as follows. Let $\tau_i'\in(\bZ/2\bZ)^N$ be the element with all entries 0 except the $i$-th position. Then $\{\tau_i=\tau_i'+\tau_{i+1}',\  i=1,\ldots,N-1\}$, is a set of generators for $I_N$. For $0\leq m\leq [N/2]$, we define a character $\chi_m$ as follows:
\beq\label{def of chim}
\chi_m(\tau_m)=-1\text{ and }\chi_m(\tau_i)=1\text{ for }i\neq m.
\eeq

For $\chi\in I_N^\vee$, we set
\beqn
B_\chi=\on{Stab}_{B_{N}}\chi.
\eeqn
Let $s_i$, $i=1,\ldots,N-1$, be the simple reflections in $W=S_N$. It is easy to check that 
\beq\label{stabilizer in sn}
\begin{gathered}
\text{$\on{Stab}_{S_N}(\chi_m)=\langle s_i,i\neq m\rangle\cong S_m\times S_{N-m}$ if $m\neq N/2$, and}\\
\text{$\on{Stab}_{S_N}(\chi_m)$ contains $S_m\times S_{m}$ as an index 2 normal subgroup if $m=N/2$.}
\end{gathered}
\eeq
Let us define 
\beqn
B_{m,N-m}=\text{ the inverse image of $S_m\times S_{N-m}\cong\langle s_i,i\neq m\rangle$ under the map $B_{N}\to S_{N}$}.
\eeqn
Then it follows from~\eqref{stabilizer in sn} that 
\beq\label{stabilizer in bn}
\begin{gathered}
\text{$B_{\chi_m}=B_{m,N-m}$  when $m\neq N/2$,}\\
\text{ and $B_{\chi_{m}}$ contains $B_{m,N-m}$ as an index 2 normal subgroup when $m=N/2$.}
\end{gathered}
\eeq

Let $\sigma_i,i=1,\ldots,N-1$, be the standard generators of $B_{N}$ which are lifts of the $s_i$'s under the map $B_N\to S_N$. Then $B_{m,N-m}$ is generated by $\sigma_i$, $i\neq m$, and $\sigma_m^2$. We have a natural quotient map 
\beq\label{quotient}
\bC[B_{m,N-m}]\twoheadrightarrow \cH_{m,-1}\otimes\cH_{N-m,-1}\cong\bC[B_{m,N-m}]/\langle(\sigma_i-1)^2,i\neq m,\sigma_m^2-1\rangle.
\eeq
Note that in the above formula the $\sigma_i$ corresponds to $-T_i$ of the Hecke algebra in~\eqref{hecke relations}. For us  the $\sigma_i$ are more natural generators since they arise from geometry as unipotent monodromy operators.

Let us write 
\beqn\cH_{m,-1}\otimes\cH_{N-m,-1}=\cH_{\chi_m,-1}.
\eeqn We consider a family of representations of $\widetilde{B}_{N}$ as follows.  For $0\leq m\leq [N/2]$, we define
\beq\label{induced module}
L_{\chi_m}\coloneqq\on{Ind}_{\bC[B_{m,N-m}]}^{\bC[B_{N}]}\cH_{\chi_m,-1}\cong \bC[B_{N}]\otimes_{\bC[B_{m,N-m}]}\cH_{\chi_m,-1}
\eeq
where in the tensor product $\bC[B_{m,N-m}]$ acts on $\cH_{\chi_m,-1}$ via the quotient map~\eqref{quotient} and on $\bC[B_{N}]$ by right multiplication. The module $L_{\chi_m}$ has a natural $\widetilde{B}_{N}$-action defined as follows. We let $B_{N}$ act on $L_{\chi_m}$ by left multiplication and we let $I_{N}$ act on $L_{\chi_m}$ via $a.(b\otimes v)=\left((b.\chi_m)(a)\right)( b\ot v)$ for $a\in I_{N}$, $b\in B_{N}$ and $v\in\cH_{\chi_m,-1}$. We view $L_{\chi_m}$ as a representation of the equivariant fundamental group $\widetilde{B}_{N}$ in this manner.

We will next identify the composition factors of the modules $L_{\chi_m}$.
Let $\mu^1\in\cP_2(m)$ and $\mu^2\in\cP_2(N-m)$, $m\in[0,[N/2]]$. Proceeding just as in the definition of $L_{\chi_m}$, one obtains the following  representation of $\widetilde{B}_{N}$:
\beq\label{induced module L}
V_{\mu^1,\mu^2}\coloneqq\on{Ind}_{\bC[B_{m,N-m}]}^{\bC[B_{N}]}(D_{\mu^1}\otimes D_{\mu^2})\cong \bC[B_{N}]\otimes_{\bC[B_{m,N-m}]}(D_{\mu^1}\otimes D_{\mu^2}).
\eeq
Using~\eqref{stabilizer in bn}, one readily checks that $V_{\mu^1,\mu^2}$ is an irreducible representation of $\widetilde{B}_{N}$ when $m\neq N/2$, or when $m=N/2$ and $\mu^1\neq\mu^2$. When $m=N/2$ and $\mu^1=\mu^2$, $V_{\mu^1,\mu^2}$ breaks into the direct sum of two non-isomorphic irreducible representations of $\widetilde{B}_N$, which we denote by $V_{\mu^1,\mu^2}^{\I}$ and $V_{\mu^1,\mu^2}^{\II}$, i.e., we have
\beq\label{two modules}
V_{\mu,\mu}\cong V_{\mu,\mu}^{\I}\oplus V_{\mu,\mu}^{\II}.
\eeq
Moreover, 
\beqn
\begin{gathered}
\text{when $m\neq N/2$, $V_{\mu^1,\mu^2}\cong V_{\nu^1,\nu^2}$ if and only if $(\mu^1,\mu^2)=(\nu^1,\nu^2)$;}\\
\text{ when $m=N/2$, $V_{\mu^1,\mu^2}\cong V_{\nu^1,\nu^2}$ if and only if}\\\text{ either $(\mu^1,\mu^2)=(\nu^1,\nu^2)$ or  $(\mu^1,\mu^2)=(\nu^2,\nu^1)$. }
\end{gathered}
\eeqn

As the $D_{\mu^1}\otimes D_{\mu^2}$ are the composition factors of $\cH_{\chi_m,-1}$ we conclude:

\begin{lemma} 
\label{composition factors}
The composition factors of $L_{\chi_m}$ consist of the  $V_{\mu^1,\mu^2}$, $\mu^1\neq\mu^2$, $\mu^1\in\cP_2(m)$, $\mu^2\in\cP_2(N-m)$, and when $N=2m$ we have two additional composition factors $V_{\mu,\mu}^{\I}$ and $V_{\mu,\mu}^{\II}$ for $\mu\in\cP_2(m)$.
\end{lemma}

\subsection{Parabolic induction functor and Fourier transform}Let $L$ be a $\theta$-stable Levi subgroup contained in a $\theta$-stable parabolic subgroup $P\subset G$. We write 
\beqn
\text{$\Ll=\on{Lie}L$, $\Lp=\on{Lie}P$, $L_K=L\cap K$, $P_K=P\cap K$, $\Ll_1=\Ll\cap\Lg_1$, $\Lp_1=\Lp\cap\Lg_1$.}
\eeqn
We will make use of the parabolic  induction functor 
\beqn\on{Ind}_{\fl_1\subset\mathfrak{p}_1}^{\Lg_1}:D_{L_K}(\fl_1)\to D_K(\Lg_1)
\eeqn defined in \cite{H,Lu3}. 
In the following we recall its definition. Let 
 \beqn
 \on{pr}:\Lp_1=\Ll_1\oplus(\Ln_P)_1\to\Ll_1
 \eeqn be the natural projection map, where $\Ln_P$ is the nilpotent radical of $\Lp$ and $(\Ln_P)_1=\Ln_P\cap\Lg_1$. Consider the diagram
\beq\label{induction diagram}
\xymatrix{\mathfrak{l}_1&\Lp_1\ar[l]_-{\on{pr}}&K\times\fp_1\ar[l]_-{p_1}\ar[r]^-{p_2}&K\times^{P_K}\fp_1\ar[r]^-{\check{\pi}}&\Lg_1},
\eeq
where $p_1$ and $p_2$ are natural projection maps and $\check{\pi}:(k,x)\mapsto \on{Ad}(k)(x)$. 

The maps in~\eqref{induction diagram} are $K\times P_K$-equivariant, where $K$ acts trivially on $\Ll_1$ and $\Lp_1$, by left multiplication on the $K$-factor on $K\times\Lp_1$ and on $K\times^{P_K}\Lp_1$, and by adjoint action on $\Lg_1$, and $P_K$ acts on $\Ll_1$ by $a.l=\on{pr}(\on{Ad}a(l))$, by adjoint action on $\Lp_1$, by $a.(k,p)=(ka^{-1},\on{Ad}a(p))$ on $K\times\Lp_1$, trivially on $K\times^{P_K}\Lp_1$ and $\Lg_1$. 

Let $A$ be a complex in $D_{L_K}(\fl_1)$. Then $(\on{pr}\circ p_1)^*A\cong p_2^*A'$ for a well-defined complex $A'$ in  $D_K(K\times^{P_K}\fp_1)$. Define
\beqn
\on{{I}nd}_{\fl_1\subset\mathfrak{p}_1}^{\Lg_1}A=\check{\pi}_{!}A'[\dim P-\dim L].
\eeqn
Let $\fF:D_K(\Lg_1)\to D_K(\Lg_1)$ be the Fourier transform functor (we also use the same notation $\fF$ for the functor defined for $(L_K,\Ll_1)$). Here we have identified $\Lg_1$ with $\Lg_1^*$ via a $K$-invariant non-degenerate bilinear form on $\Lg_1$. It is shown in \cite{H,Lu3} that the induction functor commutes with Fourier transform, i.e.,
\beq\label{commutativity}
\fF( \on{Ind}_{\fl_1\subset\mathfrak{p}_1}^{\Lg_1}A)\cong\on{Ind}_{\fl_1\subset\mathfrak{p}_1}^{\Lg_1}(\fF(A)).
\eeq

\section{Maximal $\theta$-stable parabolic subgroups and parabolic induction}\label{sec-parabolic}

In this section, we study the parabolic induction functor  with respect to a chosen family of $L^m\subset P^m$, $1\leq m<N/2$, and two more pairs $L^{n,\omega}\subset P^{n,\omega}$, $\omega=\I,\II$, if $N=2n$, where $P^m$ (resp. $P^{n,\omega}$) is a maximal  $\theta$-stable parabolic subgroup and $L^m$ (resp. $L^{n,\omega}$) is a $\theta$-stable Levi subgroup of $P^m$ (resp. $P^{n,\omega}$) defined as follows. 

Fix a basis $\{e_i, 1\leq i\leq N\}$ of $V$ such that $\la e_i,e_j\ra=\delta_{i+j,N+1}$. For $1\leq m< N/2$, we define $P^m$ to be the parabolic subgroup of $G$ that stabilizes the flag
\beqn
0\subset V_{m}^0\subset V_m^{0\p}\subset \bC^{N},
\eeqn 
where $V_m^0=\on{span}\{e_i,1\leq i\leq m\}$.
We define $L^m$ to be the $\theta$-stable Levi subgroup  of $P^m$ which consists of diagonal block matrices of sizes $m,N-2m,m$. When $N=2n$, for $\omega=\I,\II$, we define $P^{n,\omega}$ to be the parabolic subgroup of $G$ that stabilizes the flag
\beqn
0\subset V_{n}^\omega\subset V_n^{\omega\p}\subset \bC^{2n},
\eeqn 
where $V_n^\I=\on{span}\{e_i,1\leq i\leq n\}$ and $V_n^{\II}=\on{span}\{e_i,1\leq i\leq n-1,e_{n+1}\}$. Let  $L^{n,\omega}$ be a $\theta$-stable Levi subgroup of $P^{n,\omega}$. According to \cite{BH}, every maximal $\theta$-stable parabolic subgroup of $G$ is $K$-conjugate to one of the above form. 

Let $\fp^m=\on{Lie}\,P^m$, $\fp^m_1=\fp^m\cap\Lg_1$, and $(\Ln_{P^m})_1=\Ln_{P^m}\cap\Lg_1$, where $\Ln_{P^m}$ is the nilpotent radical of $\Lp^m$, etc.  

\begin{proposition}\label{small}We have:
\begin{enumerate}
\item The map
$${\pi}_m^N:K\times^{P^{m}_K}(\Ln_{P^m})_1\to\cN_1,\ (k,x)\mapsto\on{Ad}k(x)$$
is a small map onto its image, generically one-to-one.
\item The map \beqn
\check{\pi}_m^N:K\times^{P^m_K}\fp^m_1\to\Lg_1,\ (k,x)\mapsto\on{Ad}k(x).
\eeqn is a small map onto its image, generically one-to-one.
\end{enumerate}
The same holds for the two maps ${\pi}_n^{2n,\omega}$ and $\check{\pi}_n^{2n,\omega}$ defined using $P^{n,\omega}$, $\omega=\I,\II$.
 \end{proposition}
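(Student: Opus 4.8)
The plan is to verify the standard codimension criterion for smallness: a proper map $f\colon\widetilde X\to X$ with $\widetilde X$ smooth and irreducible is small onto its image $Y=f(\widetilde X)$ (which is then closed and irreducible) provided $\codim_Y\{x\in Y:\dim f^{-1}(x)\ge d\}>2d$ for all $d>0$, and is moreover generically one-to-one once the generic fibre is a single reduced point. Here $K/P^m_K$ and $K/P^{n,\omega}_K$ are projective, so all four maps are proper with smooth irreducible source; thus it suffices to (a) identify each image together with its stratification into $K$-orbits, and then (b) check, for each non-open orbit $\cO_\mu$ in the image, that $2\dim f^{-1}(y)<\codim_Y\cO_\mu$ for $y\in\cO_\mu$, and that the fibre over a point of the open orbit is a single reduced point.

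For the images: since $\fp^m$ and its nilradical are $\theta$-stable, a generic $y\in(\Ln_{P^m})_1$ satisfies $y^3=0$, and reading off its action on the flag $0\subset V_m^0\subset (V_m^0)^\perp\subset V$ shows that its Jordan type is $\lambda(m)$, the transpose of $(N-2m,m,m)$; hence $\img\pi_m^N=\overline{\cO_{\lambda(m)}}$. A dimension count with $\dim K/P^m_K=m(N-m)-\binom{m+1}{2}$ and $\dim(\Ln_{P^m})_1=m(N-2m)+\binom{m+1}{2}$ shows that the source has dimension $m(2N-3m)=\dim\cO_{\lambda(m)}$, so $\pi_m^N$ is generically finite; one then checks that the unique $W$ in a generic fibre is $\img(y^2)$ (or $\ker y$ when $N<3m$), so $\pi_m^N$ is generically one-to-one. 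For $\check\pi_m^N$ the image is the closed set of $x\in\Lg_1$ admitting an $x$-stable isotropic $m$-dimensional subspace, stratified by $K$-orbits, with the analogous dimension count now involving $\dim\fl^m_1$. The maps $\pi_n^{2n,\omega}$, $\check\pi_n^{2n,\omega}$ are treated identically with $V_n^\omega$ Lagrangian (so $(V_n^\omega)^\perp=V_n^\omega$) and $\lambda(n)=(2^n)$; here $\cO_{(2^n)}$ splits into two $K$-orbits $\cO^{\I}_{(2^n)}$, $\cO^{\II}_{(2^n)}$, and the choice of $\omega$ selects which of the two is the image.

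The fibre of $\pi_m^N$ over $y\in\cO_\mu$ is $\cF_y=\{W\subseteq V:\dim W=m,\ W\text{ isotropic},\ yW=0,\ yW^\perp\subseteq W\}$. Since $yW^\perp\subseteq W\subseteq\ker y$ forces $\img(y^2)=(\ker y^2)^\perp\subseteq W$, and since $\cO_\mu\subseteq\overline{\cO_{\lambda(m)}}$ forces $\mu^t$ to have at most three parts (equivalently $y^3=0$), the variety $\cF_y$ lies inside a Grassmannian of subspaces of $\ker y/\img(y^2)$, cut down by the isotropy condition and by the relation $yW^\perp\subseteq W$; this yields an explicit upper bound for $\dim\cF_y$ in terms of $\mu$. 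One then verifies $2\dim\cF_y<\dim\cO_{\lambda(m)}-\dim\cO_\mu$ for every $\mu\neq\lambda(m)$ lying below $\lambda(m)$; in the Lagrangian case this reduces to $2\binom{n-a}{2}<(n-a)^2$, and in general it is a finite, elementary, but close-to-sharp computation (for instance, for $m=1$ and $\mu=(1^N)$ it reads $2N-4<2N-3$). I expect this to be the main obstacle: producing an upper bound for $\dim\cF_y$ tight enough to beat $\tfrac12\codim_{\overline{\cO_{\lambda(m)}}}\cO_\mu$ uniformly over all admissible $\mu$ — an inequality that fails for non-maximal parabolics, so the argument must exploit the maximality of $P^m$ in an essential way.

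Finally, for $\check\pi_m^N$ I would induct on $N$. Its fibre over $x=s+n$ (Jordan decomposition, with $s,n\in\Lg_1$) is $\{W:xW\subseteq W,\ W\text{ isotropic},\ \dim W=m\}$; the self-adjoint $s$ has mutually orthogonal nondegenerate eigenspaces $V_\alpha$ on which $n$ acts by a nilpotent self-adjoint operator $n_\alpha$, and an $x$-stable isotropic $W$ splits as $\bigoplus_\alpha W_\alpha$, so the fibre is a disjoint union, over tuples with $\sum_\alpha m_\alpha=m$, of products of $\check\pi$-fibres for the smaller split pairs $(\GL(V_\alpha),\mathrm O(V_\alpha))$ at the nilpotents $n_\alpha$. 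When $s\neq0$ every $\dim V_\alpha<N$, so the inductive hypothesis applies to each factor, and since the relevant codimensions add, the product obeys the smallness bound; when $s=0$, $x$ is nilpotent and the fibre is the isotropic Spaltenstein variety $\{W:xW\subseteq W\}$, whose dimension I would bound by stratifying according to $xW$ and invoking the estimate from the previous paragraph. The same reduction handles $\check\pi_n^{2n,\omega}$, so the proposition reduces to the combinatorial fibre-dimension inequality above.
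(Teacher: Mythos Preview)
For part (1) your outline matches the paper's, and you correctly identify the fibre $\cF_y$, the image $\overline{\cO_{\lambda(m)}}$, and the fact that the required inequality is close to sharp. The paper carries out the computation you leave open as follows. First it reduces from $y\in\cO_{3^i2^j1^{N-3i-2j}}$ to $y_0\in\cO_{2^j1^{N-3i-2j}}$: one checks directly that the fibre over $y$ for $\tau_m^N$ is isomorphic to the fibre over $y_0$ for $\tau_{m-i}^{N-3i}$, and that the codimensions agree, so it suffices to treat orbits with no $3$'s. Then for $y_0\in\cO_{2^j1^{N-2j}}$ the paper stratifies the fibre by $k=\dim(W\cap y_0V)$; each stratum is an affine bundle over a product of two orthogonal Grassmannians, giving an explicit quadratic in $k$ whose maximum can be located and compared with $\tfrac12\codim\cO_{2^j1^{N-2j}}$. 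Your proposed bound via the ambient Grassmannian of $\ker y/\img(y^2)$ is cruder and is unlikely to be tight enough on its own; the stratification by $k$ is what makes the estimate go through.

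For part (2) the paper takes a completely different and much shorter route that you miss. The bundle $K\times^{P^m_K}\fp^m_1$ is the fibrewise orthogonal complement of $K\times^{P^m_K}(\Ln_{P^m})_1$ inside the trivial bundle $K/P^m_K\times\Lg_1$, so by functoriality of the Fourier transform one has $\fF\bigl((\pi_m^N)_*\bC[-]\bigr)\cong(\check\pi_m^N)_*\bC[-]$. Part (1) says the left side is $\fF$ of a simple IC sheaf, hence itself a simple IC sheaf; therefore $(\check\pi_m^N)_*\bC[-]\cong\IC(\img\check\pi_m^N,\bC)$, which forces $\check\pi_m^N$ to be small and generically one-to-one. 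No induction and no fibre computation for $\check\pi_m^N$ are needed.

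Your inductive approach to (2) may be salvageable, but the $s=0$ step is a genuine gap as written. For nilpotent $x$ the fibre of $\check\pi_m^N$ is $\{W:xW\subseteq W,\ W\text{ isotropic},\ \dim W=m\}$, which is \emph{not} the variety $\cF_x$ from part~(1) (there one demands $xW=0$ and $xW^\perp\subseteq W$). ``Stratifying according to $xW$'' does not obviously reduce one to the other, so your final sentence --- that the whole proposition reduces to the single inequality from part~(1) --- is not justified; you would in effect need a second, independent smallness estimate, which is exactly what the Fourier transform trick bypasses.
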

 
We define 
\beq\label{definition of g1m}
\Lg_1^m=\on{Im}\check{\pi}_m^N, \ 1\leq m<N/2,\ \Lg_1^{n,\omega}=\on{Im}\check{\pi}_n^{2n,\omega},\,\omega=\mathrm{I},\mathrm{II}.
\eeq
For $m<N/2$, $\Lg_1^m$ consists of elements in $\Lg_1$ with eigenvalues $a_1,a_1,\ldots,$ $a_m,a_m,$ $a_j, j\in[2m+1,N]$, where $\sum _{k=1}^m2a_k+\sum_{j=2m+1}^{N}a_j=0$. Let 
\beqn
\begin{gathered}
\text{$Y^r_m=\{x\in\Lg_1^{reg}\,|\, x$ has eigenvalues $a_1,a_1,\ldots, a_m,a_m, a_j, j\in[2m+1,N]$,}\\\hspace{.5in}\text{ where $a_i\neq a_j$ for $i\neq j$\}.}
\end{gathered}
\eeqn One checks readily that $\overline{Y^r_m}=\Lg_1^m$. 

Consider the case $m=N/2=n$. For $\omega=\I,\II$, let
\beqn
\begin{gathered}
\text{$Y^{r,\omega}_n=\{x\in\Lg_1^{reg}\,|\, x$ has eigenvalues $a_1,a_1,\ldots, a_n,a_n$, where $a_i\neq a_j$ for $i\neq j$,}\\\hspace{.5in}\text{  and the nilpotent part of $x$ lies in the orbit $\cO_{2^n}^{\omega}$\},}
\end{gathered}
\eeqn
where $\cO_{2^n}^\omega$ is the nilpotent orbit given by the partition $2^m$ and defined by the equation $\on{Im}\pi_n^{2n,\omega}=\bar\cO_{2^n}^\omega$. 
Then $Y^{r,\omega}_n$ is an open dense subset in $\Lg_1^{n,\omega}$.

Let $(\fp^m_1)^r=\fp^m_1\cap Y_m^r$ and $(\Ll^m_1)^{rs}=\Ll_1^m\cap(\Ll^m)^{rs}$. 
\begin{proposition}\label{prop-induction2}
(1) Suppose that $1\leq m<N/2$. There is a natural surjective map 
\beq\label{map of fd}
\pi_1^K(Y_m^r)\twoheadrightarrow\pi_1^{L^m_K}\left((\Ll^m_1)^{rs}\right)\cong B_m\times\widetilde{B}_{N-2m}
\eeq
such that for   an $L^m_K$-equivariant  local system $\cT$ on $(\Ll^m_1)^{rs}$ associated to a $\pi_1^{L^m_K}((\Ll_1^m)^{rs})$-representation $E$, we have
\beqn
\on{Ind}_{\Ll_1^m\subset\Lp_1^m}^{\Lg_1}\IC(\Ll_1^m,\cT)\cong\IC(\Lg_1^m,\cT'),
\eeqn
where $\cT'$ is the $K$-equivariant local system on $Y_m^r$  associated to the representation of $\pi_1^K(Y_m^r)$  which is obtained from $E$ by pull-back under the map~\eqref{map of fd}.

(2) We have a natural surjective map 
\beq\label{map of fd2}
\pi_1^K\left(Y_n^{r,\omega}\right)\twoheadrightarrow\pi_1^{L^{n,\omega}_K}\left(\left(\Ll^{n,\omega}_1\right)^{rs}\right)\cong B_n,\ \omega=\I,\II,
\eeq
such that for an  $L^{n,\omega}_K$-equivariant local system $\cT$ on $(\Ll^{n,\omega}_1)^{rs}$ associated  to a $\pi_1^{L^{n,\omega}_K}((L^{n,\omega}_1)^{rs})$-representation $E$, we have 
\beqn
\on{Ind}_{\Ll^{n,\omega}_1\subset\Lp^{n,\omega}_1}^{\Lg_1}\IC(\Ll^{n,\omega}_1,\cT)\cong\IC\left(\Lg_1^{n,\omega},\cT'\right),
\eeqn
where $\cT'$ is the $K$-equivariant local system on $Y_n^{r,\omega}$ associated to the representation of  $\pi_1^K(Y_n^{r,\omega})$ which is obtained from $E$ by pull-back under the map~\eqref{map of fd2}.
\end{proposition}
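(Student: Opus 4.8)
The plan is to unwind the definition of the parabolic induction functor, reduce it to a pushforward along the map $\check\pi^N_m$, and then apply the decomposition theorem together with the smallness recorded in Proposition~\ref{small}. Recall that $\on{Ind}_{\Ll^m_1\subset\Lp^m_1}^{\Lg_1}$ is built from the correspondence $K\times^{P^m_K}\Ll^m_1\xleftarrow{\,r\,}K\times^{P^m_K}\Lp^m_1\xrightarrow{\,\check\pi^N_m\,}\Lg_1$, where $r$ is induced by the linear (hence smooth) projection $\Lp^m_1=\Ll^m_1\oplus(\Ln_{P^m})_1\twoheadrightarrow\Ll^m_1$: for $\cF\in D_{L^m_K}(\Ll^m_1)$ one has $\on{Ind}(\cF)=(\check\pi^N_m)_!\,r^*\cF$, up to the standard cohomological shift of \cite{H,Lu3}. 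Since $\check\pi^N_m$ factors through the proper projection $\Lg_1\times K/P^m_K\to\Lg_1$ we may replace $(\check\pi^N_m)_!$ by $(\check\pi^N_m)_*$, and since Proposition~\ref{small}(2) says $\check\pi^N_m$ is small and generically one-to-one onto $\Lg_1^m$, the decomposition theorem will force the output to be a single intersection cohomology sheaf on $\Lg_1^m$; the real content is to identify the local system.

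To make this precise I would first observe that $K\times^{P^m_K}\Lp^m_1$ is a vector bundle over the connected flag variety $K/P^m_K$, hence smooth and irreducible, and that $r$ is a composite of smooth morphisms with affine-space fibers, so that $r^*\IC(\Ll^m_1,\cT)$ is, up to shift, the intermediate extension $\IC(K\times^{P^m_K}\Lp^m_1,\widetilde\cT)$, where $\widetilde\cT$ is the $K$-equivariant local system on $r^{-1}((\Ll^m_1)^{rs})$ corresponding to $E$ under the canonical isomorphism $\pi_1^K\big(r^{-1}((\Ll^m_1)^{rs})\big)\cong\pi_1^{L^m_K}((\Ll^m_1)^{rs})$. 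Applying $(\check\pi^N_m)_*$ and using smallness (no summand is supported on a proper closed subvariety of $\Lg_1^m$), the result is $\IC(\Lg_1^m,\cT')$ for a local system $\cT'$ defined on the maximal open $V\subseteq\Lg_1^m$ over which $\check\pi^N_m$ is finite and $\widetilde\cT$ is lisse on the preimage; since $\overline{Y^r_m}=\Lg_1^m$, it then suffices to show $Y^r_m\subseteq V$ and to compute $\cT'|_{Y^r_m}$.

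The heart of the argument — and the step I expect to be the main obstacle — is this last geometric analysis. For $x\in Y^r_m$, the element $x$ is a symmetric operator for $\la\,,\,\ra$ and, being regular, has one Jordan block per eigenvalue; a short computation with the form shows that the eigenline of each doubled eigenvalue $a_i$ is isotropic and that the span $W_x$ of these eigenlines is the only $x$-stable isotropic $m$-dimensional subspace of $V$. Since $m<N/2$, Witt's theorem gives that $K=\SO(V,Q)$ is transitive on $m$-dimensional isotropic subspaces, so $(\check\pi^N_m)^{-1}(x)$ consists of the single $P^m_K$-coset carrying $V^0_m$ onto $W_x$; being proper and bijective with smooth irreducible source, $\check\pi^N_m$ is an isomorphism over $Y^r_m$. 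Choosing $g\in K$ with $gV^0_m=W_x$, one computes that the Levi part of $\on{Ad}_{g^{-1}}x\in\Lp^m_1$ is the block-diagonal element $(\on{diag}(a_i),\on{diag}(a_j),\on{diag}(a_i))$, which lies in $(\Ll^m_1)^{rs}$; hence $(\check\pi^N_m)^{-1}(Y^r_m)$ maps into $r^{-1}((\Ll^m_1)^{rs})$, and composing $\pi_1^K(Y^r_m)\cong\pi_1^K\big((\check\pi^N_m)^{-1}(Y^r_m)\big)\to\pi_1^K\big(r^{-1}((\Ll^m_1)^{rs})\big)\cong\pi_1^{L^m_K}((\Ll^m_1)^{rs})$ produces the map~\eqref{map of fd}. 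Proper base change along the isomorphism $\check\pi^N_m|_{Y^r_m}$ then identifies $\on{Ind}(\IC(\Ll^m_1,\cT))|_{Y^r_m}$ with the transport of $\widetilde\cT$, i.e.\ with the local system attached to the pullback of $E$; this shows $Y^r_m\subseteq V$ and gives the claimed isomorphism. For surjectivity of~\eqref{map of fd} I would note that $(\check\pi^N_m)^{-1}(Y^r_m)$ is a dense open subset of the restriction of the vector bundle $K\times^{P^m_K}\Lp^m_1\to K\times^{P^m_K}\Ll^m_1$ over the dense open locus $K\times^{P^m_K}((\Ll^m_1)^{rs})_\circ\subseteq K\times^{P^m_K}(\Ll^m_1)^{rs}$ where the two $\mathfrak{gl}_m$-blocks and the middle block have pairwise disjoint spectra; thus the comparison map factors through a dense open immersion followed by a vector-bundle projection followed by a dense open immersion, each surjective on $\pi_1$.

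Finally, part (2) is proved in the same way, now with $\Ll^{n,\omega}_1\cong\mathfrak{gl}_n$, $(\Ll^{n,\omega}_1)^{rs}=(\mathfrak{gl}_n)^{rs}$ and $\pi_1^{L^{n,\omega}_K}((\Ll^{n,\omega}_1)^{rs})\cong B_n$, using Proposition~\ref{small} for $\check\pi^{2n,\omega}_n$. The one new feature is that for $N=2n$ the group $\SO(V,Q)$ has two orbits on maximal isotropic subspaces; for $x\in Y^{r,\omega}_n$ the unique $x$-stable isotropic $n$-space $W_x$ lies in exactly one of them, and the condition on the nilpotent part of $x$ in the definition of $Y^{r,\omega}_n$ is precisely the requirement that $W_x$ lie in the $\SO(V,Q)$-orbit of $V^\omega_n$ (so $(\check\pi^{2n,\omega}_n)^{-1}(x)=\emptyset$ for $x\in Y^{r,\omega'}_n$, $\omega'\neq\omega$). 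Hence $\check\pi^{2n,\omega}_n$ is again an isomorphism over $Y^{r,\omega}_n$, and the argument above yields the map~\eqref{map of fd2} together with the asserted formula.
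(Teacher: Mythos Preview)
Your proposal is correct and follows essentially the same approach as the paper's proof: both arguments hinge on showing that $\check\pi^N_m$ is one-to-one over $Y^r_m$, that the preimage lands in $r^{-1}((\Ll^m_1)^{rs})$, and then combining this with the smallness of Proposition~\ref{small} to conclude that the induced sheaf is a single IC complex. The paper establishes the one-to-one claim by conjugating to an explicit normal form and invoking the Weyl group action on complete $x_0$-stable flags, whereas you argue directly via the Jordan structure of a regular symmetric operator (the eigenline of each doubled eigenvalue is isotropic, and these are the only $x$-stable isotropic pieces); both routes are equally valid. Your surjectivity argument for~\eqref{map of fd} via a factorization through a dense open in a vector bundle is likewise the same idea as the paper's observation that $\Lp_1^r$ is open in $\on{pr}^{-1}(\Ll_1^{rs})$, just phrased on the $K$-side rather than the $P_K$-side.

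One small point: the assertion that $\check\pi^N_m|_{Y^r_m}$ is literally an \emph{isomorphism} of varieties would need $Y^r_m$ to be known normal (a proper bijection from a smooth source onto a cuspidal target is not an isomorphism). You do not actually need this: proper plus bijective over $\mathbb{C}$ already gives a homeomorphism in the analytic topology, which suffices both for identifying equivariant fundamental groups and for transporting the local system via proper base change. The paper avoids this issue by only ever asserting bijectivity.
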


\subsection{Proof of Proposition~\ref{small}}
We begin with the proof of (1). 
Consider the following projection
\beqn
\tau_m^N:\{(x,0\subset V_m\subset V_m^\p\subset V=\bC^{N})\,|\,x\in\Lg_1,\ xV_m=0,\ xV_m^\p\subset V_{m}\}\to\cN_1.
\eeqn
When $m\neq N/2$, the map $\tau_m^N$ can be identified with the map $\pi_m^N$. When $N=2m$, the image of the map $\tau_m^{2m}$ has two irreducible components, i.e., closures of the two orbits $\cO_{2^m}^\I$ and $\cO_{2^m}^{\II}$. The two maps ${\pi}_m^{N,\I}$ and ${\pi}_m^{N,\II}$ can be identified with the map $\tau_m^{2m}$ restricted to $\overline{(\tau_m^{2m})^{-1}(\cO_{2^m}^\I)}$ and $\overline{(\tau_m^{2m})^{-1}(\cO_{2^m}^\II)}$ respectively.
Thus it suffices to show that
\beq\label{smallness of pi}
\text{ the map ${\tau}_m^N$ is small over its image and generically one-to-one}.
\eeq
When $m\neq N/2$, one can check that the image of $\tau_m^N$ is as follows
\beqn
\on{Im}\tau_m^N=\bar\cO_{3^{m}1^{N-3m}}\text{ if }N\geq 3m,\ \ \on{Im}\tau_m^N=\bar\cO_{3^{N-2m}2^{3m-N}}\text{ if }N<3m.
\eeqn
Assume that $N\geq 3m$ and $x\in\cO_{3^{m}1^{N-3m}}$. Then $(\tau_m^N)^{-1}(x)$ consists of the flag $0\subset V_m\subset V_m^\p\subset V$, where $V_m=\on{Im}x^2$. Assume that $N<3m$ and $x\in\cO_{3^{N-2m}2^{3m-N}}$. Then $(\tau_m^N)^{-1}(x)$ consists of the flag $0\subset V_m\subset V_m^\p\subset V$, where $V_m=\ker x$. This proves that $\tau_m^N$ is generically one-to-one.

Let $x\in\cO_{3^i2^j1^{N-3i-2j}}\subset\on{Im}\tau_m^N$. We assume that $3^i2^j1^{N-3i-2j}\neq 3^m1^{N-3m}$ if $N\geq 3m$, and $3^i2^j1^{N-3i-2j}\neq 3^{N-2m}2^{3m-N}$ if $N< 3m$.  It suffices to show that
\beqn
\dim(\tau_m^N)^{-1}(x)<\on{codim}_{\on{Im}\tau_m^N}\cO_{3^i2^j1^{N-3i-2j}}/2.
\eeqn  
Let $x_0\in\cO_{2^j1^{N-3i-2j}}\subset\on{Im}\tau_{m-i}^{N-3i}$. (Note that $\tau_{m-i}^{N-3i}$ is defined since $m-i\leq (N-3i)/2$.) One checks readily that (using~\eqref{dim-centralizer} for the second equality)
\beqn
(\tau_m^{N})^{-1}(x)\cong(\tau_{m-i}^{N-3i})^{-1}(x_0)
\text{ and }
\on{codim}_{\on{Im}\tau_m^N}\cO_{3^i2^j1^{N-3i-2j}}=\on{codim}_{\on{Im}\tau_{m-i}^{N-3i}}\cO_{2^j1^{N-3i-2j}}.
\eeqn
Thus it suffices to show that
\beqn
\dim(\tau_{m-i}^{N-3i})^{-1}(x_0)<\on{codim}_{\on{Im}\tau_{m-i}^{N-3i}}\cO_{2^j1^{N-3i-2j}}/2.
\eeqn
Let us write 
\beqn
\Omega_{m,j}^N=(\tau_m^N)^{-1}(\zeta_j)\text{ for }\zeta_j\in\cO_{2^j1^{N-2j}}\subset \on{Im}\tau_m^N
\eeqn
\beqn
 \text{ and }a_{m,j}^N=\on{codim}_{\on{Im}\pi_{m}^{N}}\cO_{2^j1^{N-2j}}=m(2N-3m)-j(N-j).
 \eeqn
  To prove that the map $\tau_m^N$ is small, we are reduced to proving that
\beq\label{dimest}
\dim\Omega_{m,j}^N<\frac{a_{m,j}^N}{2}.
\eeq
To prove this we recall the partitioning of  $\Omega_{m,j}^N$ into $\Omega_{m,j}^{N,k}$ given in \cite[Section2]{CVX2} as follows:
$$
\Omega_{m,j}^{N,k}=\{(0\subset V_m\subset V_m^\p\subset V=\bC^{N})\,|\,\dim(V_m\cap \zeta_jV)=k\}.
$$
We have
\beqn \Omega_{m,j}^{N,k}\neq\emptyset\Leftrightarrow\max\{m+j-N/2,j/2\}\leq k\leq\min\{j,m\}.
\eeqn
Recall that we have a surjective map $\Omega_{m,j}^{N,k}\to\on{OGr}(j-k,j)\times\on{OGr}(m-k,N-2j)$ with fibers being affine spaces $\mathbb{A}^{(m-k)(j-k)}$. We have
\beqn
\dim \overline{\Omega_{m,j}^{N,k}}=-2k^2+(-N+3j+2m+1)\,k+mN-mj-\frac{j^2+3m^2+j+m}{2}.
\eeqn
One checks that
\beqn
\begin{gathered}
\text{if }j\geq N-2m,\ \dim\overline{ \Omega_{m,j}^{N,k}} \text{ is maximal when }k=m+j-[\frac{N}{2}],\\
\text{if }j<N-2m,\  \dim\overline{ \Omega_{m,j}^{N,k}} \text{ is maximal when }k=[\frac{j+1}{2}].
\end{gathered}
\eeqn
Thus a direct calculation shows that
\beqn
\dim(\pi_m^{N})^{-1}(\zeta_j)=\left\{\begin{array}{cc}\frac{a_{m,j}^N}{2}+\frac{j+m-N}{2}&\text{if } j\geq N-2m\text{ and }N\text{ even},\text{ or }j< N-2m\text{ and }j\text{ odd}\\
\frac{a_{m,j}^N}{2}-\frac{m}{2}&\text{if }j\geq N-2m\text{ and }N\text{ odd}, \text{ or }j< N-2m\text{ and }j \text{ even}.
\end{array}\right.
\eeqn
This proves \eqref{dimest} (note that $m+j<N$). The proof of \eqref{smallness of pi} is complete. This finishes the proof of the claim (1) in the proposition.
 
 It then follows that  we have
 \beq\label{image1}
(\pi_m^N)_*\bC[-]\cong\on{IC}(\cO_\lambda,\bC),\ (\text{resp. }((\pi^N_{N/2})^\omega)_*\bC[-]\cong\on{IC}(\cO_{\lambda}^\omega,\bC),\ \omega=\I,\II,)
 \eeq
 where \beqn
\lambda={3^{m}1^{N-3m}}\text{ if }N\geq 3m,\ \ \lambda={3^{N-2m}2^{3m-N}}\text{ if }N<3m.
\eeqn
Note that $K\times^{P^m_K}\Lp^m_1$ is the orthogonal complement of $K\times^{P^m_K}(\Ln_{P^m})_1$ in the trivial bundle $K\times\Lg_1$ over $K/P^m_K$. 
By the functoriality of Fourier transform, we have that
\beq\label{FT in prop}
\mathfrak{F}\left((\pi_m^N)_*\bC[-]\right)\cong(\check{\pi}_m^N)_*\bC[-].
\eeq
Since Fourier transform sends simple perverse sheaves to simple perverse sheaves, we can conclude from~\eqref{image1} and~\eqref{FT in prop} that 
\beqn
(\check{\pi}_m^N)_*\bC[-]\cong \on{IC}(\on{Im}\check{\pi}_m^N,\bC).
\eeqn
This proves the claim (2) of the proposition. The argument for $(\check{\pi}_n^{2n })^{\omega}$, $\omega=\I,\II$, is the same.
The proof of the proposition is complete.

\subsection{Proof of Proposition~\ref{prop-induction2}}

Note that we have that 
\beq\label{LK and l1}
\text{$L^m_K\cong \GL(m)\times \SO(N-2m)$ and $(\Ll^m)_1\cong\mathfrak{gl}(m)\oplus \mathfrak{sl}(N-2m)_1$.}
 \eeq
To ease  notations, let us write now that $L=L^m$, $P=P^m$, and $\check{\pi}=\check{\pi}_m^N$ etc.

We first show that
\beq\label{claim-one-to-one}
\begin{gathered}
\text{The map $\check{\pi}$ (resp. $\check{\pi}_n^\omega$), when restricted to $\check{\pi}^{-1}(Y^r)$ (resp. $\check{\pi}^{-1}(Y_n^{r,\omega})$),}\\
\text{ is one-to-one. }
\end{gathered}
\eeq
Each element in $Y^r$ is $K$-conjugate to an element $x_0\in\fp_1$ (see \cite[Theorem 7]{KR}), where
\beq\label{normal form}
\begin{gathered}
x_0e_i=a_ie_i,\ x_0e_{N+1-i}=e_i+a_ie_{N+1-i}\text{ for }i\in[1,m],\\
x_0e_j=b_je_j+c_je_{N+1-j},\ x_0e_{N+1-j}=c_je_j+b_je_{N+1-j}\text{ for }j\in[m+1,[N/2]]\\
x_0e_{(N+1)/2}=b_{(N+1)/2}e_{(N+1)/2} \text{ if $N$ is odd}
\end{gathered}
\eeq
and the numbers $a_i,i=1,\ldots,m,b_j+c_j,b_j-c_j,j=m+1,\ldots, [N/2],b_{(N+1)/2}$ are distinct.

Thus it suffices to show that $\check{\pi}^{-1}(x_0)$ consists of one point. Note that $\check{\pi}^{-1}(x_0)$ consists of $x_0$-stable $m$-dimensional isotropic subspaces of $V$.  Assume that $U_m\in\check{\pi}^{-1}(x_0)$. Then $U_m$ is a direct sum of generalized eigenspaces of $x_0$. The generalized eigenspace of $x_0$ with eigenvalue $a_i$ is $\on{span}\{e_i,e_{N+1-i}\}$, $i=1,\ldots,m$, and the eigenspace of $x_0$ with eigenvalue $b_j+c_j$, $b_j-c_j$, $b_{(N+1)/2}$ is $\on{span}\{e_j+e_{N+1-j}\}$, $\on{span}\{e_j+e_{N+1-j}\}$, $\on{span}\{e_{(N+1)/2}\}$, respectively. Since $U_m$ is isotropic, $U_m$ has to equal $\on{span}\{e_i,i=1,\ldots,m\}$. This proves~\eqref{claim-one-to-one} for $\check{\pi}_m$, $m<N/2$. The proof for $\check{\pi}_n^\omega$ is entirely similar and omitted.

Now we show that
\beq\label{image of p}
\text{The image of $\fp^r_1$ under the map $\on{pr}:\Lp_1\to\Ll_1$ is $\Ll_1^{rs}$.}
\eeq
Let $x\in \Lp_1^r$. By the above proof of~\eqref{claim-one-to-one} we can assume that $\on{Ad}(k)x=x_0$ for some $k\in K$,  where  $x_0$ is as in~\eqref{normal form}. Thus $(k,x)\in\check{\pi}^{-1}(x_0)$. It follows from~\eqref{claim-one-to-one} that $(k,x)=(1,x_0)\in K\times^{P_K}\Lp_1$. Hence $k\in P_K$. Assume that $k=lu$ where $l\in L_K$ and $u\in U_K=U\cap K$ ($U$ is the unipotent radical of $P$). Then we have $\on{pr}(x)=\on{pr}(\on{Ad}(u^{-1}l^{-1})x_0)=\on{pr}(\on{Ad}(l^{-1})x_0)=\on{Ad}(l^{-1})\on{pr}(x_0)$. It is clear that $\on{pr}(x_0)\in\Ll^{rs}$. Thus~\eqref{image of p} follows.

By~\eqref{claim-one-to-one} and~\eqref{image of p}, we have the following diagram, when restricting~\eqref{induction diagram} to $Y^r$,
\beqn
\xymatrix{\mathfrak{l}_1^{rs}&\Lp_1^r\ar[l]_-{\on{pr}}&K\times\fp_1^r\ar[l]_-{p_1}\ar[r]^-{p_2}&K\times^{P_K}\fp_1^r\ar[r]^-{\check{\pi}}&Y^r}.
\eeqn
Using~\eqref{claim-one-to-one}, we see that 
\beqn
\pi_1^K(Y^r)\cong\pi_1^{K\times P_K}(Y^r)\cong\pi_1^{K\times P_K}(K\times^{P_K}\Lp_1^r)\cong\pi_1^{K\times P_K}(K\times\Lp_1^r)\cong\pi_1^{P_K}(\Lp_1^r).
\eeqn
Finally, the canonical map $\pi_1^{P_K}(\Lp_1^r)\rightarrow\pi_1^{P_K}(\Ll_1^{rs})\cong\pi_1^{L_K}(\Ll_1^{rs})$ is surjective. We see this as follows. First, the canonical map above can be identified with the canonical map  $\pi_1^{P_K}(\Lp_1^r)\rightarrow\pi_1^{P_K}(\on{pr}^{-1}(\Ll_1^{rs}))$. Now, because $\Lp_1^r$ is an open subset in $\on{pr}^{-1}(\Ll_1^{rs})$, which is smooth, the map $\pi_1(\Lp_1^r)\rightarrow\pi_1(\on{pr}^{-1}(\Ll_1^{rs}))$ is a surjection. To conclude that this property persists when we pass to the equivariant fundamental group it suffices to remark that the equivariant fundamental group is always a quotient of the ordinary fundamental group as long as the group is connected. 
We now conclude the argument making use of Proposition \ref{small}.

\section{Fourier transform of nilpotent orbital complexes for \texorpdfstring{$(\SL(N), \SO(N))$}{lg}}\label{sec-FT}
Consider the symmetric pair $(G,K)=(\SL(N), \SO(N))$. Let us write
$
\cA_{N}$ for the set of all simple $K$-equivariant perverse sheaves on $\cN_1$ (up to isomorphism), that is, the set of IC complexes $\IC(\cO,\cE)$, where $\cO$ is a $K$-orbit in $\cN_1$ and $\cE$ is an irreducible $K$-equivariant local system on $\cO$ (up to isomorphism). An IC complex in $\cA_{N}$ is called a nilpotent orbital complex.

Let $n=[N/2]$. We set
\beqn
\begin{gathered}
\Sigma_{N} = \{(\nu;\mu^1,\mu^2)\mid 0\leq m \leq n, \ \nu\in\cP(m)\\
 0\leq k \leq n-m , \ \mu^1\in\cP_2(k), \mu^2\in\cP_2(N-2m-k)  \}\,.
\end{gathered}
\eeqn
In the case when $N$ is even, we identify the triple $(\nu;\mu^1,\mu^2)$ with $(\nu;\mu^2,\mu^1)$ if $|\mu^1|=|\mu^2|$ and $\mu^1\neq\mu^2$, and the triples $(\nu;\mu,\mu)$ attain two labels $\I$ and $\II$.

Given a triple $(\nu;\mu^1,\mu^2)\in\Sigma_{N}$ (resp. $(\nu;\mu,\mu)^{\omega}\in\Sigma_N$, $\omega=\I,\II$), where $|\nu|=m<N/2$,  we define  an irreducible $K$-equivariant local system $\cT(\nu;\mu^1,\mu^2)$ (resp. $\cT(\nu;\mu,\mu)^{\omega}$) on $Y^r_m$ (here we write $Y^r_0=\Lg_1^{rs}$) as follows. We obtain  a map
\beqn
\tau:\pi_1^K(Y_m^r)\to B_m\times\widetilde{B}_{N-2m}\to  S_m\times \widetilde{B}_{N-2m}
\eeqn
by composing the map in~\eqref{map of fd} with the natural map $B_m\times\widetilde{B}_{N-2m}\to S_m\times \widetilde{B}_{N-2m}$. 
Note that the map $\tau$ is surjective. Then $\cT(\nu;\mu^1,\mu^2)$ (resp. $\cT(\nu;\mu,\mu)^{\omega}$) is the irreducible local syste associated to the irreducible representation of $\pi_1^K(Y^r_m)$ given by pulling back the irreducible representation $\rho_\nu\boxtimes V_{\mu^1,\mu^2}$ (resp. $\rho_\nu\boxtimes V_{\mu,\mu}^\omega$) via the map $\tau$; here $\rho_\nu\in S_m^\vee$ is the irreducible representation of $S_m$ corresponding to $\nu\in\cP(m)$  and $V_{\mu^1,\mu^2}$ (resp. $V_{\mu,\mu}^\omega$) is the irreducible representation of $\widetilde{B}_{N-2m}$ defined in~\eqref{induced module} (resp. \eqref{two modules}).

Assume now that $N=2n$.  Given a triple $(\nu;\emptyset,\emptyset)^\omega\in\Sigma_{N}$, $\omega=\I,\II$, we define the irreducible $K$-equivariant local system $\cT(\nu;\emptyset,\emptyset)^{\omega}$ on $(Y_n^r)^\omega$ as the local system associated to the representation of $\pi_1^K((Y_n^r)^\omega)$ obtained by pulling back the representation $\rho_\nu\in S_n^\vee$ corresponding to $\nu\in\cP(n)$ under that map
\beqn
\pi_1^K((Y_n^r)^\omega)\twoheadrightarrow B_n \twoheadrightarrow S_n\,.
\eeqn

Now we are ready to formulate our main result:
\begin{theorem}\label{main theorem}
The Fourier transform $\fF:\on{Perv}_K(\Lg_1)\to\on{Perv}_K(\Lg_1)$ induces a bijection
\beqn
\begin{gathered}
\fF:\cA_{N}\xrightarrow{\sim}\left\{\IC\left(\Lg_1^m,\cT(\nu;\mu^1,\mu^2)\right)|\left(\nu;\mu^1,\mu^2\right)\in\Sigma_{N},\mu^1\neq\mu^2,\ |\nu|=m<N/2\right\}\\
\qquad\qquad\cup \left\{\IC\left(\Lg_1^m,\cT(\nu;\mu,\mu)^\omega\right)|\left(\nu;\mu,\mu\right)^\omega\in\Sigma_{N},\omega=\I,\II,\ |\nu|=m<N/2\right\}\text{ (if $N$ is even)},\\
\qquad\qquad\cup \left\{\IC\left(\Lg_1^{n,\omega},\cT(\nu;\emptyset,\emptyset)^\omega\right)|\left(\nu;\emptyset,\emptyset\right)^\omega\in\Sigma_{N},\omega=\I,\II,\ |\nu|=n=N/2\right\}\text{ (if $N$ is even)},
\end{gathered}
\eeqn
where $\Lg_1^0=\Lg_1$, $\Lg^m_1$ and $\Lg^{n,\omega}_1$ are defined in~\eqref{definition of g1m}.
\end{theorem}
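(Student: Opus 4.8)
The plan is to deduce Theorem~\ref{main theorem} from three ingredients: (i) Grinberg's nearby cycle construction (Theorem~\ref{Misha}) and its twists; (ii) the parabolic induction statements Proposition~\ref{small} and Proposition~\ref{prop-induction2}; and (iii) the combinatorial identity of Lemma~\ref{lemma-count}. Write $T_N$ for the target set in the statement. Since $\fF\colon\on{Perv}_K(\Lg_1)\to\on{Perv}_K(\Lg_1)$ is an equivalence of abelian categories, it induces a bijection on isomorphism classes of simple objects; in particular it is injective on $\cA_N$. Hence, granting that $\cA_N$ and $T_N$ are finite of the same cardinality, it suffices to prove the one inclusion $\fF^{-1}(T_N)\subseteq\cA_N$. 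I would first record that the members of $T_N$ are pairwise non-isomorphic: distinct values of $m$ give varieties $\Lg_1^m$, $\Lg_1^{n,\omega}$ of distinct dimension, and for fixed $m$ the local systems $\cT(\nu;\mu^1,\mu^2)$, $\cT(\nu;\mu,\mu)^\omega$ are pulled back along the surjective maps $\tau$ (resp.\ \eqref{map of fd2}) from the representations $\rho_\nu\boxtimes V_{\mu^1,\mu^2}$, $\rho_\nu\boxtimes V_{\mu,\mu}^\omega$, which are pairwise non-isomorphic by the discussion preceding Lemma~\ref{composition factors}. So $|T_N|$ is the number of triples in $\Sigma_N$ modulo the stated identifications, and Lemma~\ref{lemma-count} is the assertion that this equals $|\cA_N|$, the number of pairs (nilpotent $K$-orbit in $\cN_1$, irreducible $K$-equivariant local system on it).

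The inclusion $\fF^{-1}(T_N)\subseteq\cA_N$ I would prove by induction on $N$, treating separately the ``cuspidal'' full-support members and the rest. The cuspidal part consists of the $\IC(\Lg_1,\cT(\emptyset;\mu^1,\mu^2))$ and, for $N$ even, the $\IC(\Lg_1,\cT(\emptyset;\mu,\mu)^\omega)$. These are supplied by Grinberg's construction: taking the nearby cycles, along the adjoint quotient $\Lg_1\to\La/W$, of the rank-one local systems twisted by the characters $\chi_k\in I_N^\vee$ produces (sums of) nilpotent orbital complexes whose Fourier transforms are the simple perverse sheaves $\IC(\Lg_1,\cM)$ with $\cM$ running over the irreducible local systems on $\Lg_1^{rs}$ whose monodromy is a composition factor of $L_{\chi_k}$. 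Identifying those monodromy representations with the modules $D_{\mu^1}\boxtimes D_{\mu^2}$ over $\cH_{k,-1}\times\cH_{N-k,-1}$, hence with the $V_{\mu^1,\mu^2}$ and (when $N=2k$) the $V_{\mu,\mu}^\omega$ by Lemma~\ref{composition factors}, identifies the corresponding $\cM$ with $\cT(\emptyset;\mu^1,\mu^2)$ and $\cT(\emptyset;\mu,\mu)^\omega$. So the cuspidal part of $T_N$ lies in $\fF(\cA_N)$.

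For the remaining members --- $\IC(\Lg_1^m,\cT(\nu;\mu^1,\mu^2))$ with $1\le m=|\nu|$, and, for $N$ even, $\IC(\Lg_1^m,\cT(\nu;\mu,\mu)^\omega)$ and $\IC(\Lg_1^{n,\omega},\cT(\nu;\emptyset,\emptyset)^\omega)$ --- I use Proposition~\ref{prop-induction2}: each equals $\on{Ind}_{\Ll_1^m\subset\Lp_1^m}^{\Lg_1}\IC(\Ll_1^m,\cT)$ (resp.\ induction from $\Ll_1^{n,\omega}$), where $\cT$ is the local system on $(\Ll_1^m)^{rs}$ for the representation $\rho_\nu\boxtimes V_{\mu^1,\mu^2}$ of $\pi_1^{L_K^m}((\Ll_1^m)^{rs})\cong B_m\times\widetilde B_{N-2m}$ (with $\rho_\nu$ inflated along $B_m\twoheadrightarrow S_m$, and similarly in the other cases). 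Now $\fF$ commutes with parabolic induction by \eqref{commutativity}, and $\Ll_1^m\cong\mathfrak{gl}(m)\oplus\mathfrak{sl}(N-2m)_1$, $L_K^m\cong\GL(m)\times\SO(N-2m)$ by \eqref{LK and l1}; writing $\IC(\Ll_1^m,\cT)=A_1\boxtimes A_2$ accordingly, $A_1$ is the IC extension from $\mathfrak{gl}(m)^{rs}$ of the local system for $\rho_\nu$ and $A_2=\IC(\mathfrak{sl}(N-2m)_1,\cT(\emptyset;\mu^1,\mu^2))$. By the classical type-$A$ Springer correspondence for $\GL(m)$, $\fF^{-1}A_1$ is a nilpotent orbital complex of $\mathfrak{gl}(m)$; and $A_2$ is the $m'=0$ member of $T_{N-2m}$, so by the induction hypothesis $\fF^{-1}A_2$ is a nilpotent orbital complex of $\mathfrak{sl}(N-2m)_1$. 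Hence $\fF^{-1}\IC(\Ll_1^m,\cT)=\fF^{-1}A_1\boxtimes\fF^{-1}A_2$ is supported on the nilpotent cone of $\Ll_1^m$, so $\on{Ind}_{\Ll_1^m\subset\Lp_1^m}^{\Lg_1}(\fF^{-1}\IC(\Ll_1^m,\cT))=\fF^{-1}\IC(\Lg_1^m,\cT(\nu;\mu^1,\mu^2))$ is a perverse sheaf supported on $\cN_1$; and it is irreducible because $\IC(\Lg_1^m,\cT(\nu;\mu^1,\mu^2))$ is. Therefore $\fF^{-1}\IC(\Lg_1^m,\cT(\nu;\mu^1,\mu^2))\in\cA_N$, and the two even-$N$ families follow in the same way, using $P^{n,\omega}$, \eqref{map of fd2}, and $\Ll_1^{n,\omega}\cong\mathfrak{gl}(n)$. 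This establishes $\fF^{-1}(T_N)\subseteq\cA_N$; combined with $|\cA_N|=|T_N|$ from Lemma~\ref{lemma-count} and the injectivity of $\fF$ it gives $\fF(\cA_N)=T_N$.

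I expect the two genuinely substantial points to be: first, in the cuspidal step, the precise identification of the monodromy of Grinberg's (twisted) nearby cycle sheaves with the modules $D_{\mu^1}\boxtimes D_{\mu^2}$ over the Hecke algebras at $q=-1$ --- the least formal input, and exactly where the representation theory of the $\cH_{\bullet,-1}$ (and Grojnowski's observation) enters; and second, the bookkeeping in Lemma~\ref{lemma-count}, in particular for $N$ even, where one must match the splitting of $\mathrm{O}(N)$-orbits into pairs $\cO_\lambda^{\I},\cO_\lambda^{\II}$ and the component groups of nilpotent orbits in $\mathfrak{sl}(N)_1$ against the identifications $(\nu;\mu^1,\mu^2)\sim(\nu;\mu^2,\mu^1)$ and the labels $\I,\II$ attached to the triples $(\nu;\mu,\mu)$ in $\Sigma_N$.
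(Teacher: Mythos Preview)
Your proposal is correct and follows essentially the same strategy as the paper: establish the cuspidal ($m=0$) case via Grinberg's nearby cycle theorem and Lemma~\ref{composition factors}, handle the remaining cases by writing each target sheaf as a parabolic induction via Proposition~\ref{prop-induction2} and using \eqref{commutativity} together with classical Springer theory for $\mathfrak{gl}(m)$, and conclude by the counting argument Lemma~\ref{lemma-count}. The only cosmetic difference is that the paper does not set up an induction on $N$: since the Grinberg step is proved uniformly in $N$, one may simply apply the already-established cuspidal statement \eqref{FT full support} to the smaller pair $(\SL(N-2m),\SO(N-2m))$ directly, which is exactly what your ``induction hypothesis'' supplies.
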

\subsection{Proof of Theorem~\ref{main theorem}}Let $p(k)$ denote the number of partitions of $k$ and let $q(k)$ denote the number of 2-regular partitions of $k$. We write $p(0)=q(0)=1$. Let us define 
\beq\label{def of d(k)}
d(k)=\sum_{s=0}^kq(s)q(2k+1-s),
\eeq
\beq\label{def of e(k)}
e(k)=\sum_{s=0}^{k-1}q(s)\,q(2k-s)+\frac{q(k)^2+3q(k)}{2}.
\eeq
 \begin{lemma}\label{lemma-count}
We have 
\beq\label{odd number}
|\cA_{2n+1}|=\sum_{k=0}^{n}p(n-k)d(k)=|\Sigma_{2n+1}|
\eeq
\beq\label{even number}
|\cA_{2n}|=\sum_{k=0}^{n}p(n-k)e(k)=|\Sigma_{2n}|.
\eeq
\end{lemma}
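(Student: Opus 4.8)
The plan is to prove the two identities in \eqref{odd number} and \eqref{even number} by computing $|\cA_N|$ and $|\Sigma_N|$ separately and matching them. For $|\Sigma_N|$ the computation is a direct bookkeeping exercise from the definition of $\Sigma_N$: a triple $(\nu;\mu^1,\mu^2)$ is specified by a choice of $m$, then $\nu\in\cP(m)$, then $k$, then $\mu^1\in\cP_2(k)$ and $\mu^2\in\cP_2(N-2m-k)$, with the identification $(\nu;\mu^1,\mu^2)\sim(\nu;\mu^2,\mu^1)$ when $|\mu^1|=|\mu^2|$, $\mu^1\neq\mu^2$, and the doubling of the diagonal triples $(\nu;\mu,\mu)$ in the even case. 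Summing over $\nu$ contributes the factor $p(m)$; the remaining sum over $k$ and $(\mu^1,\mu^2)$ with the symmetrization/doubling convention is precisely $d(k')$ (when $N=2n+1$, $k'=n-m$) or $e(k')$ (when $N=2n$, $k'=n-m$), by the definitions \eqref{def of d(k)} and \eqref{def of e(k)}: indeed $d(k')=\sum_{s=0}^{k'}q(s)q(2k'+1-s)$ counts unordered pairs from distinct-part partitions summing to an odd number $2k'+1$ (no self-pairing possible since the two sizes differ), while $e(k')=\sum_{s=0}^{k'-1}q(s)q(2k'-s)+\tfrac{q(k')^2+3q(k')}{2}$ does the same for the even total $2k'$, where $\tfrac{q(k')^2+q(k')}{2}$ counts unordered pairs $\{\mu^1,\mu^2\}$ of equal size $k'$ and the extra $q(k')$ accounts for the doubling of the diagonal ones. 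This gives $|\Sigma_{2n+1}|=\sum_{k=0}^n p(n-k)d(k)$ and $|\Sigma_{2n}|=\sum_{k=0}^n p(n-k)e(k)$ directly.

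For $|\cA_N|$ I would enumerate pairs $(\cO,\cE)$ where $\cO$ is a nilpotent $K$-orbit in $\cN_1$ and $\cE$ is an irreducible $K$-equivariant local system on $\cO$, i.e.\ an irreducible representation of the component group $A_K(\cO)=Z_K(x)/Z_K(x)^\circ$ for $x\in\cO$. The orbits are parametrized (per the Preliminaries) by $\cP(N)$ when $N$ is odd, and by $\cP(N)$ with the splitting/non-splitting behavior into $\mathrm{O}(N)$- vs $K$-orbits when $N$ is even; so the content is a description of $A_K(\cO_\lambda)$ and a count of its irreducibles. For orthogonal groups the component group of the centralizer of a nilpotent is an elementary abelian $2$-group whose rank equals the number of distinct odd parts of $\lambda$ (with the standard adjustment when one passes from $\mathrm{O}$ to $\mathrm{SO}$ and to $\SL$ rather than $\GL$); hence the number of irreducible local systems on $\cO_\lambda$ is $2^{(\#\text{ distinct odd parts of }\lambda)}$ up to the correction from the $\SL$ vs $\GL$ determinant condition and the orbit-splitting in the even case. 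The combinatorial heart is then to show $\sum_{\lambda\in\cP(N)}2^{(\#\text{ distinct odd parts of }\lambda)}$, suitably corrected, equals $\sum_k p(n-k)d(k)$ resp.\ $\sum_k p(n-k)e(k)$. This is proved by a generating-function manipulation: separate the even parts of $\lambda$ (which contribute $\prod_{i\geq1}(1-x^{2i})^{-1}$, the source of the $p(n-k)$ factor) from the odd parts; a multiset of odd parts with $2^{\#\text{distinct}}$ weight has generating function $\prod_{i\geq0}\frac{1+x^{2i+1}}{1-x^{2i+1}}$, and one checks by elementary series identities (or by the known interpretation of $q(k)$ via $\prod(1+x^i)$) that the coefficient of $x^{2k+1}$ (resp.\ $x^{2k}$) in the odd-parts generating function equals $d(k)$ (resp.\ $e(k)$). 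Matching coefficients of $x^N$ after multiplying by the even-parts series yields the claim.

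The step I expect to be the main obstacle is the precise determination of $A_K(\cO_\lambda)$ and its representation count, together with the matching bookkeeping in the even case where one must track (i) the passage from $\mathrm{O}(N)$ to $\SO(N)$, (ii) the further quotient coming from working inside $\SL(N)$ rather than $\GL(N)$, and (iii) the splitting of $\cO_\lambda$ into $\cO_\lambda^{\I},\cO_\lambda^{\II}$ when $\lambda$ has only even parts, which is exactly mirrored on the $\Sigma_N$ side by the doubling of the diagonal triples and the extra $+\tfrac{3q(k)}{2}$ (versus $+\tfrac{q(k)}{2}$) in $e(k)$. Once the local-system count on each orbit is pinned down as a clean function of $\lambda$ (number of distinct odd parts, with the even-only correction), the generating-function identity reducing $\sum_\lambda 2^{\#\text{distinct odd parts}(\lambda)}$ to the stated double sums is routine, so I would organize the write-up as: (a) recall orbit parametrization; (b) compute $A_K(\cO_\lambda)$ and $|\mathrm{Irr}\,A_K(\cO_\lambda)|$; (c) the generating-function identity for $d(k)$ and $e(k)$; (d) assemble $|\cA_N|$; (e) the independent direct count of $|\Sigma_N|$; (f) compare.
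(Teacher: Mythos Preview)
Your count of $|\Sigma_N|$ is correct and matches the paper's. The gap is in your computation of $|\cA_N|$: you invoke the component-group formula for nilpotent orbits in the Lie algebra $\mathfrak{so}(N)$ (rank $=$ number of distinct \emph{odd} parts), but that is the wrong setting. Here $\Lg_1$ consists of \emph{symmetric} trace-zero matrices and $K=\SO(N)$ acts on them by conjugation; the relevant centralizer is $Z_{\SO(N)}(x)$ for $x$ a symmetric nilpotent, not an antisymmetric one. For this symmetric pair one has $A_K(x)\cong(\bZ/2\bZ)^{k-1}$ for $x\in\cO_\lambda$, where $k$ is the number of distinct part sizes of $\lambda$ (all parts, not only the odd ones). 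A sanity check at $N=3$: your recipe gives $|\cA_3|=3$ or $6$ depending on which ``adjustment'' you make, whereas the correct value is $4$ (the orbit $\cO_{(2,1)}$ has $A_K\cong\bZ/2\bZ$ and contributes two local systems, while $\cO_{(3)}$ and $\cO_{(1^3)}$ contribute one each).

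Once $A_K$ is corrected, your even/odd separation of the generating function collapses: every distinct part size contributes a factor $2$, so what is needed is the statistic $p(l,k)=\#\{\lambda\vdash l:\lambda\text{ has exactly }k\text{ distinct part sizes}\}$ with generating function $\sum_{l,k}p(l,k)x^ly^k=\prod_{s\geq 1}\bigl(1+\tfrac{yx^s}{1-x^s}\bigr)$. Setting $y=2$ and dividing by $2$ gives $\tfrac12\prod_{s\geq 1}\tfrac{1+x^s}{1-x^s}$, which the paper then rewrites as $\bigl(\sum p(k)x^{2k}\bigr)\bigl(\sum q(k)x^k\bigr)^2$ to read off $\sum_k p(n-k)d(k)$ when $N=2n+1$; for $N=2n$ an extra term accounts for the all-even-parts orbits $\cO_\lambda^{\I},\cO_\lambda^{\II}$ and produces the $\tfrac{3}{2}q(n)$ piece of $e(n)$. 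The overall architecture of your plan (component group $\Rightarrow$ generating function $\Rightarrow$ match with $|\Sigma_N|$) is exactly the paper's; only the input $A_K(\cO_\lambda)$ needs to be replaced.
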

\begin{proof}
Note that
\beq\label{partition eq1}
\sum_{k\geq 0}p\left(k\right)x^k=\prod_{s\geq 1}\frac{1}{1-x^s}\text{ and } \sum_{k\geq 0}q(k)x^k=\prod_{s\geq 1}\left(1+x^s\right).
\eeq
 Let $p(l,k)$ denote the number of partitions of $l$ into (not necessarily distinct) parts of exactly $k$ different sizes. We have (see for example \cite{W})
\beq\label{partition-1}
\sum_{l,k\geq 0}p(l,k)x^ly^k=\prod_{s\geq 1}\left(1+\frac{yx^s}{1-x^s}\right).
\eeq

Assume first that $N=2n+1$. Note that if $\lambda$ is a partition of $N$ with parts of $k$ different sizes, then the component group $A_K(x)$ of the centralizer $Z_K(x)$ for $x\in\cO_\lambda$ is $(\bZ/2\bZ)^{k-1}$. Thus there are $2^{k-1}$ irreducible $K$-equivariant local systems on $\cO_\lambda$ (up to isomorphism). Hence using~\eqref{partition-1}, we see that
\beqn
|\cA_{2n+1}|=\sum_{k\geq0}p\left(2n+1,k\right)2^{k-1}=\text{Coefficient of $x^{2n+1}$ in }\frac{1}{2}\prod_{s\geq 1}\left(\frac{1+x^s}{1-x^s}\right).
\eeqn
Using~\eqref{partition eq1}, we see that 
\beq\label{partition eq2}
\prod_{s\geq 1}\left(\frac{1+x^s}{1-x^s}\right)=\left(\sum_{k\geq 0}p\left(k\right)x^{2k}\right)\left(\sum_{k\geq 0}q\left(k\right)x^k\right)^2.
\eeq
It then follows that $|\cA_{2n+1}|$ is the desired number. The fact that $|\Sigma_{2n+1}|$ equals the same number is clear from the definition. Thus~\eqref{odd number} holds.

Assume now that $N=2n$. Suppose that $\lambda$ is a partition of $N$ with parts of exactly $k$ different sizes. If $\lambda$ has at least one odd part, then there are $2^{k-1}$ irreducible $K$-equivariant local systems on $\cO_\lambda$ (up to isomorphism). If $\lambda$ has only even parts, then there are $2^{k}$ irreducible $K$-equivariant local systems on each $\cO_\lambda^\omega$ (up to isomorphism), $\omega=\I,\II$.

Thus we have that
\beqn
\begin{gathered}
|\cA_{2n}|=\sum_{k\geq0}p(2n,k)\,2^{k-1}+\sum_{k\geq 0}p(n,k)\,3\cdot2^{k-1}\\
=\text{Coefficient of $x^{2n}$ in }\frac{1}{2}\prod_{s\geq 1}\left(\frac{1+x^s}{1-x^s}\right)+\text{Coefficient of $x^{n}$ in }\frac{3}{2}\prod_{s\geq 1}\left(\frac{1+x^s}{1-x^s}\right)\\
=\frac{1}{2}\left(\sum_{k=0}^{n}p(n-k)\left(2\sum_{s=0}^{k-1}q(s)q(2k-s)+q(k)^2\right)\right)+\frac{3}{2}\sum_{k=0}^np(n-k)q(k)=\sum_{k=0}^{n}p(n-k)e(k).
\end{gathered}
\eeqn
Here we have used~\eqref{partition eq2} and the following equation
\beqn
\prod_{s\geq 1}\left(\frac{1+x^s}{1-x^s}\right)=\left(\sum_{k\geq 0}p\left(k\right)x^{k}\right)\left(\sum_{k\geq 0}q\left(k\right)x^k\right).
\eeqn Again the fact that $|\Sigma_{2n}|$ equals the desired number is clear from the definition. 
\end{proof}

Note that the IC sheaves appearing on the right hand side of the Fourier transform map $\fF$ in Theorem~\ref{main theorem} are pairwise non-isomorphic. Thus, in view of Lemma~\ref{lemma-count}, Theorem~\ref{main theorem} follows from:\begin{proposition}
Let $({\nu;\mu^1,\mu^2})\in\Sigma_{N}$ (resp. $(\nu;\mu,\mu)^\omega\in\Sigma_N$, $\omega=\I,\II$) and write $m=|\nu|$. 
The Fourier transform of $\IC(\Lg_1^m,\cT({\nu;\mu^1,\mu^2}))$ (resp. $\IC(\Lg_1^m,\cT(\nu;\mu,\mu)^\omega)$, $\IC(\Lg_1^{n,\omega},\cT(\nu;\emptyset,\emptyset)^{\omega})$) is supported on a $K$-orbit in $\cN_1$.
\end{proposition}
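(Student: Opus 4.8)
The plan is to reduce to the ``cuspidal'' case $\nu=\emptyset$, which is handled by Grinberg's nearby cycle construction, and to propagate it to general $\nu$ by parabolic induction. I would begin with two formal reductions. Since $\fF$ is an autoequivalence of $\on{Perv}_K(\Lg_1)$, it sends simple perverse sheaves to simple perverse sheaves, so $\fF(\IC(\Lg_1^m,\cT(\nu;\mu^1,\mu^2)))$ --- and likewise the two ``$\omega$''-variants --- is a \emph{simple} $K$-equivariant perverse sheaf on $\Lg_1$. Because $\cN_1$ contains only finitely many $K$-orbits, any simple $K$-equivariant perverse sheaf supported on $\cN_1$ is of the form $\IC(\bar\cO,\cE)$ for a single $K$-orbit $\cO\subset\cN_1$: its support is an irreducible $K$-stable closed subvariety of $\cN_1$, hence the closure of the unique orbit contained densely in it. So it suffices to prove that $\fF(\IC(\Lg_1^m,\cT(\nu;\mu^1,\mu^2)))$ is supported on $\cN_1$.

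For the cuspidal case $m=|\nu|=0$ I would argue as follows. The local system $\cT(\emptyset;\mu^1,\mu^2)$ on $\Lg_1^{rs}$ is the one attached to the $\widetilde{B}_N$-representation $V_{\mu^1,\mu^2}$ of~\eqref{induced module}, which by Lemma~\ref{composition factors} is a composition factor of $L_{\chi_k}$ with $k=|\mu^1|$; hence $\IC(\Lg_1,\cT(\emptyset;\mu^1,\mu^2))$ is a composition factor of $\IC(\Lg_1,\cL_{\chi_k})$, the IC extension of the local system $\cL_{\chi_k}$ attached to $L_{\chi_k}$. (Restricting a composition series of $\IC(\Lg_1,\cL_{\chi_k})$ to the dense open set $\Lg_1^{rs}$, the factors not supported on $\Lg_1\setminus\Lg_1^{rs}$ are precisely the IC extensions of the composition factors of $\cL_{\chi_k}$.) By Grinberg's nearby cycle construction and its twisted extension (Theorem~\ref{Misha}, cf.~\cite{G1,G2}), $\fF(\IC(\Lg_1,\cL_{\chi_k}))$ is a nearby cycle sheaf along the adjoint quotient $\Lg_1\to\La/W$, and is therefore supported on the zero fibre $\cN_1$; consequently so is each of its composition factors, in particular $\fF(\IC(\Lg_1,\cT(\emptyset;\mu^1,\mu^2)))$. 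When $N$ is even the same argument, using the splitting~\eqref{two modules}, handles $\fF(\IC(\Lg_1,\cT(\emptyset;\mu,\mu)^\omega))$. I expect this step to be the crux of the proof: it rests entirely on Grinberg's construction and its twisted extension, and on the precise identification of the local systems $\cT(\emptyset;\bullet,\bullet)$ with the sheaves that construction produces.

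For general $\nu$ with $m=|\nu|\geq 1$ I would reduce to the cuspidal case for a smaller symmetric pair. By Proposition~\ref{prop-induction2}(1) and the definition of $\cT(\nu;\mu^1,\mu^2)$, $\IC(\Lg_1^m,\cT(\nu;\mu^1,\mu^2))\cong\on{Ind}_{\Ll_1^m\subset\Lp_1^m}^{\Lg_1}\IC(\Ll_1^m,\cT)$, and under the identification $\Ll_1^m\cong\mathfrak{gl}(m)\oplus\mathfrak{sl}(N-2m)_1$ of~\eqref{LK and l1} one has $\IC(\Ll_1^m,\cT)\cong A\boxtimes B$, where $A=\IC(\mathfrak{gl}(m),\cL_{\rho_\nu})$ and $B=\IC(\mathfrak{sl}(N-2m)_1,\cL_{V_{\mu^1,\mu^2}})$ are the IC extensions from the respective regular semisimple loci. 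Using the commutation~\eqref{commutativity} of $\fF$ with $\on{Ind}$, together with the orthogonality of the two summands of $\Ll_1^m$ (so that $\fF_{\Ll_1^m}=\fF_{\mathfrak{gl}(m)}\boxtimes\fF_{\mathfrak{sl}(N-2m)_1}$), this gives
\[
\fF\big(\IC(\Lg_1^m,\cT(\nu;\mu^1,\mu^2))\big)\cong\on{Ind}_{\Ll_1^m\subset\Lp_1^m}^{\Lg_1}\big(\fF_{\mathfrak{gl}(m)}A\ \boxtimes\ \fF_{\mathfrak{sl}(N-2m)_1}B\big).
\]
Now $\fF_{\mathfrak{gl}(m)}A$ is a direct summand of the Fourier transform of the Grothendieck--Springer sheaf of $\mathfrak{gl}(m)$, hence supported on the nilpotent cone of $\mathfrak{gl}(m)$, while $\fF_{\mathfrak{sl}(N-2m)_1}B$ is supported on $\cN\cap\mathfrak{sl}(N-2m)_1$ by the cuspidal case applied to $(\SL(N-2m),\SO(N-2m))$; so $\fF(\IC(\Ll_1^m,\cT))$ is supported on $\cN\cap\Ll_1^m$. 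It then remains to observe that parabolic induction of a complex supported on $\cN\cap\Ll_1^m$ is supported on $\cN_1$: in the diagram~\eqref{induction diagram}, the pullback of such a complex is supported over $\on{pr}^{-1}(\cN\cap\Ll_1^m)\subseteq\cN\cap\Lp_1^m$ --- by the Jordan decomposition, if $\on{pr}(x)$ is nilpotent then the semisimple part of $x$ maps to $0$ under $\on{pr}$, hence lies in the nilradical and so is $0$, i.e.\ $x$ is nilpotent --- and $\check\pi$ carries $K\times^{P^m_K}(\cN\cap\Lp_1^m)$ into $\cN_1$. Combined with the first reduction this proves the proposition; the remaining cases $\cT(\nu;\mu,\mu)^\omega$ and $\cT(\nu;\emptyset,\emptyset)^\omega$ are treated identically, using Proposition~\ref{prop-induction2}(2) for the last one (there the relevant Levi is a copy of $\GL(n)$, so only the classical $\GL$ input is needed).
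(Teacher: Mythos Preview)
Your proof is correct and follows essentially the same route as the paper: handle $m=0$ via Grinberg's (twisted) nearby cycle theorem (Theorem~\ref{Misha}) together with Lemma~\ref{composition factors}, then reduce $m>0$ to the Levi via Proposition~\ref{prop-induction2}, the commutation~\eqref{commutativity}, classical Springer for $\mathfrak{gl}(m)$, and the cuspidal case for $(\SL(N-2m),\SO(N-2m))$. Your additional remarks (why support on $\cN_1$ already forces $\IC(\bar\cO,\cE)$, and why parabolic induction preserves nilpotent support) make explicit what the paper leaves implicit, but do not change the argument.
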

\begin{proof}
Let $n=[N/2]$. We begin the proof by showing that for ${(\emptyset;\mu^1,\mu^2)}\in\Sigma_{N}$ (resp. $\left({\emptyset;\mu,\mu}\right)^\omega\in\Sigma_{N}$,  $\omega=\I,\II$)
\beq\label{FT full support}
\begin{gathered}
\text{The Fourier transform of $\IC\left(\Lg_1,\cT\left({\emptyset;\mu^1,\mu^2}\right)\right)$ $\left(\text{resp. }\IC\left(\Lg_1,\cT\left({\emptyset;\mu,\mu}\right)^\omega\right)\right)$}\\\text{ is supported on a $K$-orbit in $\cN_1$.}
\end{gathered}
\eeq
Recall that $\cT({\emptyset;\mu^1,\mu^2}))$ (resp. $\cT\left({\emptyset;\mu,\mu}\right)^\omega$) is the irreducible $K$-equivariant local system on $\Lg_1^{rs}$ corresponding to $V_{\mu^1,\mu^2}$ (resp. $V_{\mu,\mu}^\omega$).

We will now appeal to the nearby cycle construction in~\cite{GVX}.
Let us recall the characters $\chi_m\in I_{N}^\vee$, $0\leq m\leq n$ of~\eqref{def of chim}. In~\cite{GVX} we apply a nearby cycle construction to local systems associated to the $\chi_m$ and obtain a $K$-equivariant perverse sheaf $\cP_{\chi_m}$ on the nilpotent cone $\cN_1$. More precisely, for each character $\chi_m$, let us write 
$
W_{\chi_m}=\on{Stab}_{W}(\chi_m).
$
Consider the following base-change diagram of the adjoint quotient map
\beq
\label{nearby cycle diagram}
\begin{CD}
\Lg_{1,\chi_m} @>>> \Lg_1
\\
@V{f_{\chi_m}}VV    @VV{f}V
\\
\La/W_{\chi_m}@>>> \La/W\,.
\end{CD}
\eeq
Let us write $\Lg_{1,\chi_m}^{rs}$ for the base change of the regular semisimple locus $\Lg_1^{rs}$. Denote by $\cF_{\chi_m}$  the $K$-equivariant local system  on  $\Lg_{1,\chi_m}^{rs}$ corresponding to the representation of $\pi_1^{K}(\Lg_{1,\chi_m}^{rs})=I_N\rtimes B_{\chi_m}$ where $I_N$ acts via the character $\chi_m$ and $B_{\chi_m}$ acts trivially (recall that $B_{\chi_m}=\on{Stab}_{B_N}(\chi_m)$). We form the nearby cycle sheaf $\cP_{\chi_m}=\psi_{f_{\chi_m}}\cF_{\chi_m}$, appropriately shifted, so that $\cP_{\chi_m}\in\on{Perv}_K(\cN_1)$. 

Applying~\cite[Theorem 3.2]{GVX}, we obtain that
\beqn
\fF(\cP_{\chi_m})=\on{IC}(\Lg_1,\cM_{\chi_m}),
\eeqn
where $\cM_{\chi_m}$ is the $K$-equvariant local system on $\Lg_1^{rs}$ corresponding to the $\widetilde{B}_N$-representation
\beqn
M_{\chi_m}=\bC[\widetilde{B}_N]\otimes_{\bC[\widetilde{B}_{\chi_m}^0]}(\bC_{\chi_m}\otimes\cH_{W_{\chi_m}^0}).
\eeqn
Let us explain the notations in the above formula in our setting. Let $W_{\chi_m}^0$ be the Coexter subgroup of $W$ generated by $s_\alpha$ with $\chi_m(\check\alpha(-1))=1$, $\alpha\in\Phi(\Lg,\La)$, where $\Phi(\Lg,\La)$ is the root system of $\Lg$ with respect to $\La$. Note that $\check\alpha(-1)\in I_N$. Then $\cH_{W_{\chi_m}^0}$ is the Hecke algebra associated to the Coexter group $W_{\chi_m}^0$ with parameter $-1$. Let $B_{\chi_m}^0\subset B_N$ be the inverse image of $W_{\chi_m}^0\subset W$ under the natural map $B_N\to W=S_N$. Then  $\widetilde{B}_{\chi_m}^0=I_N\rtimes B_{\chi_m}^0$. In our setting, one readily checks that $W_{\chi_m}^0=\langle s_i,i\neq m\rangle$, $\cH_{W_{\chi_m}^0}=\cH_{\chi_m,-1}$ and $B_{\chi_m}^0=B_{m,N-m}$ (here we use the notations in~\S\ref{ssec-representations of tB}). The action of $\widetilde{B}_{\chi_m}^0$ on $(\bC_{\chi_m}\otimes\cH_{W_{\chi_m}^0})$ is given by $I_N$ acting via the character $\chi_m$ and ${B}_{\chi_m}^0$ acting via the quotient map $\bC[{B}_{\chi_m}^0]\to\cH_{W_{\chi_m}^0}$. Thus we conclude
\beq\label{FT of nearby cycles}
\mathfrak{F}(\cP_{\chi_m})\cong\on{IC}(\Lg_1,\cL_{\chi_m})\,,
\eeq
where $\cL_{\chi_m}$ is the $K$-equivariant local system on $\Lg_1^{rs}$ corresponding to the representations $L_{\chi_m}$ of $\pi_1^K(\Lg_1^{rs})=\widetilde{B}_{N}$ defined in~\eqref{induced module L}.

By Lemma~\ref{composition factors} the IC sheaves $\IC(\Lg_1,\cT({\emptyset;\mu^1,\mu^2}))$ and $\IC\left(\Lg_1,\cT\left({\emptyset;\mu,\mu}\right)^\omega\right)$ are composition factors of the $\on{IC}(\Lg_1,\cL_{\chi_m})$. Hence~\eqref{FT full support} follows from~\eqref{FT of nearby cycles}.

Now let $(\nu;\mu^1,\mu^2)\in\Sigma_{N}$ with $|\nu|=m>0$. Let
\beqn
\cK(\rho_\nu\boxtimes V_{\mu^1,\mu^2})
\eeqn
 denote the irreducible $L_K$-equivariant local system on $\Ll_1^{rs}$ associated to the irreducible representation  of $\pi_1^{L_K}(\Ll_1^{rs})$ obtained as a pullback 
 of $\rho_\nu\boxtimes V_{\mu^1,\mu^2}$ via the map $\pi_1^{L_K}(\Ll_1^{rs})\cong B_m\times\widetilde{B}_{N-2m}\twoheadrightarrow S_m\times\widetilde{B}_{N-2m}$.

 By Proposition~\ref{prop-induction2}, we have that 
\beq\label{induced IC sheaf}
\IC\left(\Lg_1^m,\cT({\nu;\mu^1,\mu^2})\right)=\on{Ind}_{\Ll^m_1\subset\Lp^m_1}^{\Lg_1}\IC\left(\Ll_1,\cK(\rho_\nu\boxtimes V_{\mu^1,\mu^2})\right).
\eeq 

Since Fourier transform commutes with induction  (see~\eqref{commutativity}), it suffices to show that the Fourier transform of 
$\IC\left(\Ll_1,\cK(\rho_\nu\boxtimes V_{\mu^1,\mu^2})\right)$ is supported on an $L_K$-nilpotent orbit in $\Ll_1$. This follows from the classical Springer correspondence for $\mathfrak{gl}(m)$ and~\eqref{FT full support} applied to the symmetric pair $(\SL(N-2m),\SO(N-2m))$ (see~\eqref{LK and l1}). 

The proof for $\IC(\Lg_1^m,\cT(\nu;\mu,\mu)^\omega)$, $\IC(\Lg_1^{n,\omega},\cT(\nu;\emptyset,\emptyset)^{\omega})$ proceeds in the same manner; in the latter case one uses the corresponding $\theta$-stable Levi and parabolic subgroups. We omit the details. 
\end{proof}

\subsection{More on induction}Let $(\nu;\mu^1,\mu^2)\in\Sigma_{N}$. Assume that $|\nu|=m>0$. Let $L^m\subset P^m$  be as in \S\ref{sec-parabolic}. Recall that $L^m_K\cong \GL(m)\times \SO(N-2m)$ and $\Ll^m_1\cong\mathfrak{gl}(m)\oplus \mathfrak{sl}(N-2m)_1$.

A nilpotent $L^m_K$-orbit in $\Ll^m_1$ is given by a nilpotent orbit in $\mathfrak{gl}(m)$ and a nilpotent $\SO(N-2m)$-orbit in $\mathfrak{sl}(N-2m)_1$. Thus the nilpotent $L^m_K$-orbits in $\Ll^m_1$ are parametrized by $\cP(m)\times\cP(N-2m)$, with extra labels $\I$ and $\II$ for partitions in $\cP(N-2m)$ with all parts even. For $\alpha\in\cP(m)$ and $\beta\in\cP(N-2m)$, we denote by $\cO_{\alpha,\beta}$ (or $\cO_{\alpha,\beta}^\omega$) the nilpotent $L_K^m$-orbit  in $\Ll^m_1$ given by the nilpotent orbit $\cO_\alpha$ in $\mathfrak{gl}(m)$ and the nilpotent $\SO(N-2m)$-orbit $\cO_{\beta}$ (or $\cO_\beta^\omega$) in $\mathfrak{sl}(N-2m)_1$. 

In the following we will omit the labels $\I$ and $\II$ with the understanding that everything should have corresponding labels, for example, $\cO_\lambda^\omega=\on{Ind}_{\Ll_1^m\subset\Lp_1^m}^{\Lg_1}\cO_{\alpha,\beta}^\omega$ etc.

\begin{proposition}\label{prop-induction}
 Let $\alpha\in\cP(m)$ and $\beta\in\cP(N-2m)$. Let $\cO_\lambda=\on{Ind}_{\Ll_1^m\subset\Lp_1^m}^{\Lg_1}\cO_{\alpha,\beta}$, i.e., $\lambda_i=\beta_i+2\alpha_i$. Assume that $u\in\cO_{\alpha,\beta}$ and $v\in\cO_\lambda\cap(u+(\Ln_{P^m})_1)$. We have a natural surjective map
 \beqn
\psi:A_K(v)\twoheadrightarrow A_{L^m_K}(u).
\eeqn
Moreover, let $\bC\boxtimes\cE$ be an $L^m_K$-equivariant irreducible local system on $\cO_{\alpha,\beta}$ and let $\tilde{\cE}$ be the $K$-equivariant local system on $\cO_\lambda$ obtained from $\bC\boxtimes\cE$ via the map $\psi$ above. Then $\IC(\cO_\lambda,\tilde{\cE})$ is a direct summand of $\on{Ind}_{\Ll^m_1\subset\Lp^m_1}^{\Lg_1}\on{IC}(\cO_{\nu,\mu},\bC\boxtimes\cE)$.

\end{proposition}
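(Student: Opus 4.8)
The plan is to transpose to the symmetric pair the classical theory of induced nilpotent classes of Lusztig--Spaltenstein together with the standard computation of parabolic induction on the open orbit. Throughout write $P=P^m$, $L=L^m$, $\Ln=\Ln_{P^m}$, and use $\on{pr}\colon\Lp_1=\Ll_1\oplus\Ln_1\to\Ll_1$ and $\check\pi\colon K\times^{P_K}\Lp_1\to\Lg_1$ from \S\ref{sec-parabolic} (and their $\omega=\I,\II$ analogues). The geometric input I would isolate first is the following ``Richardson/Lusztig--Spaltenstein'' statement: $\on{pr}^{-1}(\cO_{\alpha,\beta})=\cO_{\alpha,\beta}+\Ln_1$ is $P_K$-stable under the adjoint action, its $\on{Ad}(K)$-saturation has closure $\overline{\cO_\lambda}$, and $\cO_\lambda\cap\on{pr}^{-1}(\cO_{\alpha,\beta})$ is a single $P_K$-orbit, open and dense in $\on{pr}^{-1}(\cO_{\alpha,\beta})$; in particular, with $u$ fixed, $\cO_\lambda\cap(u+\Ln_1)$ is a single orbit under $\widetilde P:=\{p\in P_K\,|\,\overline p\cdot u=u\}$ (where $\overline p$ is the image of $p$ in $L_K$), and $\check\pi$ is finite onto $\cO_\lambda$ over the open locus where $\on{pr}$ of the argument lies in $\cO_{\alpha,\beta}$. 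I would verify this either by reduction to the known statement for the parabolic $P\subset\SL(N)$ or directly, using dimension estimates of the type performed in the proof of Proposition~\ref{small}; the splitting into the orbits $\cO^\omega$ when all parts are even is accounted for by working with $P^{n,\omega}$.

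For the surjection $\psi$: since $\on{pr}$ is $P_K$-equivariant (with $P_K$ acting on $\Ll_1$ through $P_K\twoheadrightarrow L_K$) and $\on{pr}(v)=u$, for $p\in Z_{P_K}(v)$ we get $\overline p\cdot u=\overline p\cdot\on{pr}(v)=\on{pr}(\on{Ad}(p)v)=\on{pr}(v)=u$, so $p\mapsto\overline p$ is a homomorphism $Z_{P_K}(v)\to Z_{L_K}(u)$. Using the single-orbit statement one checks, just as in Lusztig--Spaltenstein, that this homomorphism is surjective with connected kernel (contained in $(U_P)_K$), and that the inclusion $Z_{P_K}(v)\hookrightarrow Z_K(v)$ is surjective on component groups --- i.e. every connected component of $Z_K(v)$ meets $P_K$, which holds because $v$ is generic in $\cO_\lambda\cap\Lp_1$. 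A short diagram chase with the $\pi_0$'s then yields the unique surjection $\psi\colon A_K(v)\twoheadrightarrow A_{L_K}(u)$ making the square with $Z_{P_K}(v)$ commute. This both defines $\tilde\cE$ and records the first assertion of the proposition.

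For the direct-summand statement: let $B=\IC(\cO_{\alpha,\beta},\bC\boxtimes\cE)$ and let $B'$ be the descent of $(\on{pr}\circ p_1)^*B$ to $K\times^{P_K}\Lp_1$; then $B'$, up to shift, is the IC sheaf of $K\times^{P_K}\overline{\cO_{\alpha,\beta}+\Ln_1}$ with the evident irreducible local system on its smooth part. As $\check\pi$ is proper, the decomposition theorem shows $\on{Ind}_{\Ll_1\subset\Lp_1}^{\Lg_1}B=\check\pi_!B'[\dim P-\dim L]$ is a semisimple perverse sheaf supported on $\overline{\cO_\lambda}$ (perversity for these parabolics via \cite{H,Lu3}), and by $K$-equivariance the only IC summands meeting the open orbit $\cO_\lambda$ are of the form $\IC(\cO_\lambda,\cF)$; hence the multiplicity of $\IC(\cO_\lambda,\tilde\cE)$ equals the multiplicity of $\tilde\cE$ in the local system $\cM:=\mathcal H^{-\dim\cO_\lambda}\bigl((\on{Ind}_{\Ll_1\subset\Lp_1}^{\Lg_1}B)|_{\cO_\lambda}\bigr)$. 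I would then compute the $A_K(v)$-representation $\cM$ by proper base change at $v\in\cO_\lambda$: its bottom-degree part comes from the open stratum $\{gP_K\,|\,\on{pr}(\on{Ad}(g^{-1})v)\in\cO_{\alpha,\beta}\}$ of $\check\pi^{-1}(v)$, a finite set $Z_v$ on which $Z_K(v)$ acts, containing the basepoint $eP_K$ (valid because $\on{pr}(v)=u\in\cO_{\alpha,\beta}$), whose stabiliser in $Z_K(v)$ is $Z_{P_K}(v)$; the summand of $\cM$ indexed by the $A_K(v)$-orbit of $eP_K$ is induced from the representation of the stabiliser of $eP_K$ in $A_K(v)$ on the stalk $\cE_u$. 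Since that stabiliser is all of $A_K(v)$ by the surjectivity in the previous paragraph and the action factors through $\psi$, this summand is exactly $\tilde\cE$, so $\IC(\cO_\lambda,\tilde\cE)$ occurs in $\on{Ind}_{\Ll_1\subset\Lp_1}^{\Lg_1}B$ with multiplicity $\ge 1$. The cases with labels $\I,\II$ are handled verbatim using the corresponding $\theta$-stable parabolic and Levi subgroups.

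The main obstacle is the surjectivity $\pi_0(Z_{P_K}(v))\twoheadrightarrow A_K(v)$ --- the statement that every connected component of the centraliser of the induced nilpotent element $v$ meets the $\theta$-stable parabolic $P_K$. In the reductive-group setting this is a theorem of Lusztig--Spaltenstein, but here one must run the argument inside $\Lg_1$, which requires the explicit description of centralisers of nilpotent elements for $(\SL(N),\SO(N))$ (as in \cite{CVX1}) together with careful bookkeeping of the order-two component groups, in particular of the $\I/\II$ phenomenon for nilpotent orbits with only even parts. Given the single-orbit density statement, everything else is essentially formal.
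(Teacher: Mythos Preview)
Your outline is correct in spirit and would succeed, but you miss the type-$A$ simplification that the paper exploits, and as a result you work much harder than necessary on exactly the point you flag as the ``main obstacle.'' In the paper the key observation (Lemma~\ref{lemma-induced orbit}) is that $Z_G(v)\subset P$ \emph{exactly}, not merely $Z_G^0(v)\subset P$: one passes to $GL(N)$, where nilpotent centralisers are connected, so Lusztig--Spaltenstein's $Z_G^0(v)\subset P$ already gives the full centraliser. Hence $Z_K(v)=Z_{P_K}(v)$ on the nose, and the map $K\times^{P_K}(\bar\cO_{\alpha,\beta}+(\Ln_P)_1)\to\bar\cO_\lambda$ is generically \emph{one-to-one}, not just finite. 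Your ``main obstacle'' --- surjectivity of $\pi_0(Z_{P_K}(v))\to A_K(v)$ --- therefore evaporates: the two groups coincide. Likewise the direct-summand statement is immediate from the one-to-one property, and there is no need for the fiber decomposition and induced-representation computation you set up.

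The paper's argument for surjectivity of $\psi$ is also different from yours: rather than checking surjectivity and connected kernel of $Z_{P_K}(v)\to Z_{L_K}(u)$ directly, it observes that $Z_{L_K}(u)(U_P\cap K)$ acts on the irreducible variety $u+(\Ln_P)_1$ with a dense orbit (that of $v$), so the stabiliser $Z_{P_K}(v)=Z_K(v)$ must meet every connected component of $Z_{L_K}(u)(U_P\cap K)$, which is exactly surjectivity on $\pi_0$. Your route via explicit centraliser computations in $(\SL(N),\SO(N))$ would work but is considerably heavier; the paper's argument, by contrast, is essentially formal once one has $Z_K(v)=Z_{P_K}(v)$, and (as remarked after the proof) goes through for any $\theta$-stable Levi in a $\theta$-stable parabolic.
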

\begin{corollary}\label{coro-ind}
If moreover $(\cO_\mu,\cE)\in\cA_{N-2m}$ is a pair such that $\fF(\IC(\cO_\mu,\cE))$ has full support, then 
we have
\beqn
\on{Ind}_{\Ll^m_1\subset\Lp^m_1}^{\Lg_1}\on{IC}(\cO_{\nu,\mu},\bC\boxtimes\cE)\cong\IC(\cO_\lambda,\tilde{\cE}).
\eeqn
\end{corollary}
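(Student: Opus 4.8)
The goal is to upgrade the ``direct summand'' conclusion of Proposition~\ref{prop-induction} to an isomorphism under the extra hypothesis that $\fF(\IC(\cO_\mu,\cE))$ has full support. The plan is to combine Proposition~\ref{prop-induction} with the support-counting mechanism already established in Section~\ref{sec-FT}. First I would invoke Proposition~\ref{prop-induction} to write $\IC(\cO_\lambda,\tilde\cE)$ as a direct summand of $\on{Ind}_{\Ll^m_1\subset\Lp^m_1}^{\Lg_1}\IC(\cO_{\nu,\mu},\bC\boxtimes\cE)$, so it suffices to show the induced complex is \emph{simple}, hence equal to this single summand. Since the induction functor commutes with Fourier transform (see~\eqref{commutativity}), and $\fF$ takes simple perverse sheaves to simple perverse sheaves, it is equivalent to show that $\fF\left(\on{Ind}_{\Ll^m_1\subset\Lp^m_1}^{\Lg_1}\IC(\cO_{\nu,\mu},\bC\boxtimes\cE)\right)\cong\on{Ind}_{\Ll^m_1\subset\Lp^m_1}^{\Lg_1}\fF\left(\IC(\cO_{\nu,\mu},\bC\boxtimes\cE)\right)$ is simple.

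Next I would compute $\fF\left(\IC(\cO_{\nu,\mu},\bC\boxtimes\cE)\right)$ on the Levi. Because $\Ll^m_1\cong\mathfrak{gl}(m)\oplus\mathfrak{sl}(N-2m)_1$ and $L^m_K\cong\GL(m)\times\SO(N-2m)$, the Fourier transform factors as a tensor (external) product: on the $\mathfrak{gl}(m)$-factor, classical Springer theory tells us $\fF(\IC(\cO_\nu,\bC))$ is the IC extension of an irreducible local system $\cK(\rho_\nu)$ on $\mathfrak{gl}(m)^{rs}$, i.e.\ it has full support; on the $\mathfrak{sl}(N-2m)_1$-factor, the hypothesis that $\fF(\IC(\cO_\mu,\cE))$ has full support means $\fF(\IC(\cO_\mu,\cE))\cong\IC(\mathfrak{sl}(N-2m)_1,\cK)$ for an irreducible $\SO(N-2m)$-equivariant local system $\cK$ on $\mathfrak{sl}(N-2m)_1^{rs}$. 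Hence $\fF\left(\IC(\cO_{\nu,\mu},\bC\boxtimes\cE)\right)\cong\IC\left(\Ll^m_1,\cK(\rho_\nu)\boxtimes\cK\right)$ is the IC extension of an irreducible local system on the regular semisimple locus $(\Ll^m_1)^{rs}$ of $\Ll^m_1$.

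Now I would apply Proposition~\ref{prop-induction2}(1): inducing an IC sheaf of the form $\IC(\Ll^m_1,\cT)$ for $\cT$ an irreducible $L^m_K$-equivariant local system on $(\Ll^m_1)^{rs}$ produces exactly $\IC(\Lg_1^m,\cT')$, the IC extension of the corresponding irreducible local system $\cT'$ on $Y^r_m$ pulled back via the surjection $\pi_1^K(Y^r_m)\twoheadrightarrow\pi_1^{L^m_K}((\Ll^m_1)^{rs})$. In particular the induced complex is \emph{simple}. Tracing back through $\fF$ (which preserves simplicity), $\on{Ind}_{\Ll^m_1\subset\Lp^m_1}^{\Lg_1}\IC(\cO_{\nu,\mu},\bC\boxtimes\cE)$ is simple, and being a complex containing $\IC(\cO_\lambda,\tilde\cE)$ as a direct summand by Proposition~\ref{prop-induction}, it must equal $\IC(\cO_\lambda,\tilde\cE)$. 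This gives the claimed isomorphism.

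\textbf{Main obstacle.} The delicate point is the interface between the two propositions: Proposition~\ref{prop-induction2}(1) is stated for $\cT$ a local system on $(\Ll^m_1)^{rs}$, i.e.\ on the full regular semisimple locus, so I must be sure that the irreducible local system $\cK(\rho_\nu)\boxtimes\cK$ obtained as the Fourier transform in the previous step genuinely extends to (or is defined on) all of $(\Ll^m_1)^{rs}$ and is irreducible as a $\pi_1^{L^m_K}$-representation --- this is where the full-support hypothesis on $\fF(\IC(\cO_\mu,\cE))$ is essential, since without it the Fourier transform could be supported on a proper subvariety and Proposition~\ref{prop-induction2}(1) would not apply directly. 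One also needs the bookkeeping that the local system $\cT'$ produced on $Y^r_m$ by Proposition~\ref{prop-induction2}(1) matches, after $\fF$, the local system $\tilde\cE$ attached to $\cO_\lambda$ via the map $\psi$ of Proposition~\ref{prop-induction}; but this compatibility is forced because $\IC(\cO_\lambda,\tilde\cE)$ is already known to be a summand, and a simple complex has a unique summand. The labelled ($\I,\II$) variants are handled verbatim, replacing $P^m$ by the appropriate $\theta$-stable parabolic and using Proposition~\ref{prop-induction2}(2).
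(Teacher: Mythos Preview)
Your proposal is correct and follows essentially the same route as the paper: reduce to simplicity of the induced complex, pass to the Fourier transform via~\eqref{commutativity}, use the full-support hypothesis on the $\mathfrak{sl}(N-2m)_1$-factor together with classical Springer on the $\mathfrak{gl}(m)$-factor to identify $\fF(\IC(\cO_{\nu,\mu},\bC\boxtimes\cE))$ with $\IC(\Ll_1^m,\cG)$ for an irreducible local system $\cG$, and then conclude irreducibility of the induction. The only cosmetic difference is that the paper cites Proposition~\ref{small} (smallness of $\check\pi_m^N$) directly for the last step, whereas you cite its packaged consequence Proposition~\ref{prop-induction2}(1); since the latter is proved from the former, the arguments are the same.
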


As before let us now write $L=L^m$ and $P=P^m$ etc. We begin the proof of the above proposition with the following lemma.
\begin{lemma}\label{lemma-induced orbit}

The map
\beqn
\gamma:K\times ^{P_K}(\bar\cO_{\alpha,\beta}+(\Ln_P)_1)\to\bar\cO_\lambda
\eeqn
is generically one-to-one.
\end{lemma}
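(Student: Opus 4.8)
The plan is to reduce the statement to a dimension count, following the same strategy as in the proof of Proposition~\ref{small}. Recall that $\on{Ind}_{\Ll_1^m\subset\Lp_1^m}^{\Lg_1}\cO_{\alpha,\beta}$ is by definition the dense $K$-orbit in $\on{Ad}_K(\bar\cO_{\alpha,\beta}+(\Ln_P)_1)$, so the map $\gamma$ is dominant and both sides have the same dimension; generic one-to-oneness then amounts to showing that for $v$ in the open orbit $\cO_\lambda$, the fibre $\gamma^{-1}(v)$ is a single point. Concretely, $\gamma^{-1}(v)$ is identified with the set of $v$-stable isotropic $m$-dimensional subspaces $U\subset V$ together with an $L_K$-orbit datum, but the real content is that $U$ is uniquely determined.

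First I would make the identification explicit: a point of $K\times^{P_K}(\bar\cO_{\alpha,\beta}+(\Ln_P)_1)$ over $v$ is the same as an isotropic $m$-dimensional subspace $U$ with $vU\subset U$ (equivalently $v$ preserves the partial flag $0\subset U\subset U^\perp\subset V$), such that the induced action of $v$ on $U^\perp/U$ lies in $\bar\cO_\beta$ and the induced ``glued'' nilpotent part on the $\GL(m)$-factor lies in $\bar\cO_\alpha$. Since $v\in\cO_\lambda$ with $\lambda_i=\beta_i+2\alpha_i$, the partition $\lambda$ is exactly the one produced by this gluing, which forces the Jordan-type data on $U$ and $U^\perp/U$ to be the generic (largest) possibility, namely $v|_{U^\perp/U}\in\cO_\beta$ and $\alpha$-part in $\cO_\alpha$. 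So the fibre is cut out by: $U$ isotropic of dimension $m$, $v$-stable, and such that the two induced nilpotent operators have the prescribed (open) Jordan types.

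Next I would pin down $U$ using the Jordan structure of $v$. Decompose $V$ under $v$ into a direct sum of Jordan blocks; the subspace $U$ must interact with these blocks in a way compatible with producing the partition $\lambda=\beta+2\alpha$ from the quotient $U^\perp/U$. The key linear-algebra point is that, just as $(\tau_m^N)^{-1}(x)$ was computed to be $\on{Im}\,x^2$ or $\ker x$ in Proposition~\ref{small}, here $U$ is forced to be a specific canonical subspace built functorially from powers of $v$ (roughly, the span of the images $v^{k}$ on the blocks where $\alpha$ contributes) — hence unique. I expect the main obstacle to be bookkeeping: carefully matching up the combinatorics of how a block of size $\lambda_i$ in $v$ decomposes into a block of size $\beta_i$ for $U^\perp/U$ plus a ``width $\alpha_i$'' contribution for the $\GL(m)$-part, and checking that the isotropy condition together with $v$-stability leaves no freedom. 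This is exactly the type of argument carried out in \cite[Section 2]{CVX2} and in the proof of~\eqref{claim-one-to-one} above, where regularity was used to force uniqueness of the relevant subspace; here one uses instead the extremal nature of $\cO_\lambda$ inside $\on{Ad}_K(\bar\cO_{\alpha,\beta}+(\Ln_P)_1)$. Once uniqueness of $U$ is established, $\gamma$ is generically one-to-one and the lemma follows.
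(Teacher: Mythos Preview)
Your plan is reasonable in outline but leaves the decisive step unproved, and the paper takes a quite different route that sidesteps the difficulty entirely.

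The paper's argument is short: fix $x_0\in\cO_\lambda\cap(\cO_{\alpha,\beta}+(\Ln_P)_1)$. The ambient $G$-orbit $\widetilde\cO_\lambda$ is the Lusztig--Spaltenstein induced orbit $\on{Ind}_{\Ll}^{\Lg}\widetilde\cO_{\alpha,\beta}$, so by \cite[Theorem~1.3]{LS} one has $Z_G^0(x_0)\subset P$; passing to $GL(N)$ (where nilpotent centralizers are connected) upgrades this to $Z_G(x_0)\subset P$, hence $Z_K(x_0)\subset P_K$. Since $\widetilde\cO_\lambda\cap(\widetilde\cO_{\alpha,\beta}+\Ln_P)$ is a single $P$-orbit, any other $(k,x)\in\gamma^{-1}(x_0)$ has $\on{Ad}p(x)=x_0$ for some $p\in P$, whence $k^{-1}p\in Z_G(x_0)\subset P$ and $k\in P_K$; thus $(k,x)=(1,x_0)$ in the associated bundle.

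Your approach instead tries to pin down the isotropic $v$-stable subspace $U$ directly from the Jordan structure of $v$. The step you label ``bookkeeping'' --- that $U$ is forced to be a canonical subspace built from powers of $v$ --- is exactly the content of the Lusztig--Spaltenstein result in this setting: $Z_G(x_0)\subset P$ \emph{is} the statement that the flag (equivalently $U$) is uniquely determined by $x_0$. For general $\alpha,\beta$ this is not an elementary computation of the $\on{Im}\,x^2$ or $\ker x$ type; those formulas worked in Proposition~\ref{small} only because $\alpha=1^m$ was extremely special. So your strategy is not wrong, but executing it by hand would amount to reproving a special case of \cite{LS}, whereas the paper simply invokes it. If you want to keep your perspective, the cleanest fix is to replace the direct Jordan-block analysis with the citation to \cite{LS} at the point where you need uniqueness of $U$.
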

\begin{proof}
Let $x_0\in\cO_\lambda$. We can and will assume that $x_0\in\cO_{\alpha,\beta}+(\Ln_P)_1$. We show that $\gamma^{-1}(x_0)$ is a point. Assume that $\gamma(k,x)=x_0$. i.e. $\on{Ad}k(x)=x_0$. Then $x\in\cO_{\alpha,\beta}+(\Ln_P)_1$. Let $\widetilde\cO_\lambda$ (resp. $\widetilde\cO_{\alpha,\beta}$) be the (unique) $G$-orbit (resp. $L$-orbit) in $\Lg$ (resp. $\Ll$) that contains $\cO_\lambda$ (resp. $\cO_{\alpha,\beta}$). We have that 
\beqn
\widetilde\cO_\lambda=\on{Ind}_{\Ll}^{\Lg}\widetilde\cO_{\alpha,\beta}
\eeqn
in the notation of Lusztig and Spaltenstein \cite{LS}. By \cite[Theorem 1.3]{LS}, we have $Z_G^0(x_0)\subset P$. In fact, we have that $Z_G(x_0)\subset P$. This can be seen by enlarging the group $G$ to $GL(N)$ and using the fact that $Z_{GL(N)}(x_0)$ is connected. Thus $Z_K(x_0)\subset P_K$. Furthermore, $\widetilde\cO_\lambda\cap(\widetilde\cO_{\alpha,\beta}+\Ln_P)$ is a single orbit under $P$. Thus there exists $p\in P$ such that $\on{Ad}p(x)=x_0$. It follows that $k^{-1}p\in Z_G(x_0)\subset P$. Thus $k\in P\cap K=P_K$. Now we have that $(k,x)=(1,\on{Ad}k(x))=(1,x_0)$.
\end{proof}

\begin{proof}[Proof of Proposition~\ref{prop-induction}]
Note that the proof of the above lemma shows that $Z_G(v)=Z_P(v)$. We have $Z_P(v)\subset Z_L(u)U_P$.
Thus $Z_K(v)=Z_{P_K}(v)\subset Z_{L_K}(u)(U_P\cap K)$. It follows that we have a natural projection map
\beqn
Z_{K}(v)/Z_{K}^0(v)=Z_{P_K}(v)/Z_{P_K}^0(v)\to Z_{L_K}(u)/Z_{L_K}^0(u).
\eeqn
We show that  this gives us the desired map $\psi$.  Following \cite{LS}, we have that $Z_{L_K}(u)(U_P\cap K)$ has a dense orbit, i.e. the orbit of $v$, in the irreducible variety $u+(\Ln_P)_1$. Thus $Z_{P_K}(v)=Z_K(v)$ meets all the irreducible components of $Z_{L_K}(u)(U_P\cap K)$, which implies that $\psi$ is surjective.

It is easy to see that 
\beq\label{support of induced}
\on{supp}\left(\on{Ind}_{\Ll_1\subset\Lp_1}^{\Lg_1}\on{IC}(\cO_{\alpha,\beta},\bC\boxtimes\cE)\right)=\bar\cO_\lambda.
\eeq
The proposition follows from the definition of parabolic induction functor and Lemma~\ref{lemma-induced orbit}.
\end{proof}

\begin{remark}
The proof of Lemma~\ref{lemma-induced orbit} and the existence and surjectivity of the map $\psi$ in Proposition~\ref{prop-induction} works for any $\theta$-stable Levi contained in a $\theta$-stable parabolic subgroup.
\end{remark}

\begin{proof}[Proof of Corollary~\ref{coro-ind}]
Note that the assumption implies that  $\fF\left(\on{IC}(\cO_{\alpha,\beta},\bC\boxtimes\cE)\right)$ has full support, i.e. $\on{IC}(\cO_{\alpha,\beta},\bC\boxtimes\cE)=\IC(\Ll_1,\cG)$ for some irreducible $L_K$-equivariant local system $\cG$ on $\Ll_1^{rs}$. We have that
\beqn
\fF\left(\on{Ind}_{\Ll_1\subset\Lp_1}^{\Lg_1}\on{IC}(\cO_{\alpha,\beta},\bC\boxtimes\cE)\right)=\on{Ind}_{\Ll_1\subset\Lp_1}^{\Lg_1}\fF\left(\on{IC}(\cO_{\alpha,\beta},\bC\boxtimes\cE)\right)=\on{Ind}_{\Ll_1\subset\Lp_1}^{\Lg_1}\IC(\Ll_1,\cG).
\eeqn
It suffices to show that $\on{Ind}_{\Ll_1\subset\Lp_1}^{\Lg_1}\IC(\Ll_1,\cG)$ is irreducible. This follows from the definition of the induction functor and Proposition \ref{small}.
\end{proof}

\begin{corollary}\label{coro-full supp1}
The Fourier transform of a nilpotent orbital complex $\IC(\cO,\cE)\in\cA_N$ has full support, i.e., $\on{supp}\fF(\IC(\cO,\cE))=\Lg_1$, if and only if it is not of the form $\on{Ind}_{\Ll_1\subset\Lp_1}^{\Lg_1}\IC(\cO',\cE')$ where $\on{supp}\fF(\IC(\cO',\cE'))=\Ll_1$, and $L\subset P$ is a pair chosen as in \S\ref{sec-parabolic}. 
\end{corollary}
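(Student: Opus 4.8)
The plan is to read the corollary off from Theorem \ref{main theorem} together with the computations already made in the proof of the Proposition preceding Lemma \ref{lemma-count}; essentially no new geometric input is needed. I would use throughout that $\fF$ is an equivalence of categories — in particular injective on isomorphism classes of simple perverse sheaves — and that it commutes with parabolic induction by \eqref{commutativity}.

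One implication I would prove by contrapositive. Suppose $\IC(\cO,\cE)\cong\on{Ind}_{\Ll_1\subset\Lp_1}^{\Lg_1}\IC(\cO',\cE')$ for one of the chosen pairs $L\subset P$ of \S\ref{sec-parabolic}, so $L=L^m$ with $1\le m<N/2$ or $L=L^{n,\omega}$ with $N=2n$. Then $\fF(\IC(\cO,\cE))\cong\on{Ind}_{\Ll_1\subset\Lp_1}^{\Lg_1}\fF(\IC(\cO',\cE'))$ by \eqref{commutativity}. By the definition of the induction functor via the diagram \eqref{induction diagram}, $\on{Ind}_{\Ll_1\subset\Lp_1}^{\Lg_1}(-)=\check{\pi}_{!}(-)[\dim P-\dim L]$, so its support is contained in $\on{Im}\check{\pi}=\Lg_1^m$ (resp. $\Lg_1^{n,\omega}$) for any input, since $\check{\pi}_{!}$ of anything vanishes off $\on{Im}\check{\pi}$. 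As $m\ge1$, this is a proper closed subvariety of $\Lg_1$ — its generic elements have a repeated eigenvalue, unlike the regular semisimple elements of $\Lg_1$ — so $\fF(\IC(\cO,\cE))$ does not have full support.

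For the converse, again by contrapositive, suppose $\on{supp}\fF(\IC(\cO,\cE))\ne\Lg_1$. By Theorem \ref{main theorem} the sheaf $\fF(\IC(\cO,\cE))$ is one of $\IC(\Lg_1^m,\cT(\nu;\mu^1,\mu^2))$, $\IC(\Lg_1^m,\cT(\nu;\mu,\mu)^\omega)$, or $\IC(\Lg_1^{n,\omega},\cT(\nu;\emptyset,\emptyset)^\omega)$; since $\Lg_1^0=\Lg_1$ whereas $\Lg_1^m\subsetneq\Lg_1$ for $m\ge1$ and $\Lg_1^{n,\omega}\subsetneq\Lg_1$, the hypothesis forces $m=|\nu|\ge1$, and we are in the range of one of the chosen pairs $L^m\subset P^m$ or $L^{n,\omega}\subset P^{n,\omega}$. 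But in the proof of the Proposition preceding Lemma \ref{lemma-count} — see \eqref{induced IC sheaf} and the paragraphs treating the labels $\cT(\nu;\mu,\mu)^\omega$ and $\cT(\nu;\emptyset,\emptyset)^\omega$ — it is established that this sheaf equals $\on{Ind}_{\Ll_1^m\subset\Lp_1^m}^{\Lg_1}\IC(\Ll_1,\cK)$ (resp. $\on{Ind}_{\Ll_1^{n,\omega}\subset\Lp_1^{n,\omega}}^{\Lg_1}\IC(\Ll_1^{n,\omega},\cK)$) for a suitable $L_K$-equivariant local system $\cK$ on the regular semisimple locus, and moreover that $\IC(\Ll_1,\cK)$ (resp. $\IC(\Ll_1^{n,\omega},\cK)$) is $\fF(\IC(\cO',\cE'))$ for a simple $L_K$-equivariant perverse sheaf $\IC(\cO',\cE')$ supported on a nilpotent $L_K$-orbit — using classical Springer theory for $\mathfrak{gl}(m)$ (resp. $\mathfrak{gl}(n)$) and \eqref{FT full support} for $(\SL(N-2m),\SO(N-2m))$. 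Hence $\fF(\IC(\cO,\cE))=\on{Ind}(\fF(\IC(\cO',\cE')))=\fF(\on{Ind}(\IC(\cO',\cE')))$, and injectivity of $\fF$ gives $\IC(\cO,\cE)=\on{Ind}_{\Ll_1\subset\Lp_1}^{\Lg_1}\IC(\cO',\cE')$, where by construction $\fF(\IC(\cO',\cE'))$ has full support.

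This is essentially bookkeeping: the substantive inputs — the classification of Theorem \ref{main theorem}, the smallness of $\check{\pi}$ in Proposition \ref{small} (which confines every induction along a chosen pair to the proper subvariety $\Lg_1^m$ or $\Lg_1^{n,\omega}$), and the realization \eqref{induced IC sheaf} of the non-full-support Fourier transforms as inductions from the chosen $\theta$-stable Levis — are all in hand. The only point needing care is aligning the three families of Theorem \ref{main theorem} with the three types of chosen pair $L\subset P$, and noticing that ``$m\ge1$'' is exactly the condition ``$\Lg_1^m\subsetneq\Lg_1$'' that drives the dichotomy; I foresee no real obstacle beyond that.
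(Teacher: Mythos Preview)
Your proposal is correct and follows essentially the same route as the paper's proof: the ``only if'' direction uses \eqref{commutativity} together with $\on{Im}\check\pi\subsetneq\Lg_1$, and the ``if'' direction combines Theorem~\ref{main theorem} with \eqref{induced IC sheaf} and the commutativity of $\fF$ with induction (the paper also invokes Corollary~\ref{coro-ind} here, but your appeal to $\fF$ being an equivalence is an equally valid shortcut). One small slip: the unlabelled Proposition you cite comes \emph{after} Lemma~\ref{lemma-count}, not before it.
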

\begin{proof}
The only if part follows from the facts that Fourier transform commutes with parabolic induction and that $\on{supp}\on{Ind}_{\Ll_1\subset\Lp_1}^{\Lg_1}A\subsetneq \Lg_1$. The if part follows from~\eqref{induced IC sheaf},~\eqref{coro-ind} and Theorem~\ref{main theorem}.
\end{proof}

\begin{corollary}\label{coro-full supp2}

Let $\lambda=(\lambda_1\geq\lambda_2\geq\cdots)\in\cP(N)$. 

(1) If $\lambda_i-\lambda_{i+1}\geq 3$ for some $i$, then $\on{supp}\fF(\IC(\cO_\lambda,\cE))\neq\Lg_1$ for any $K$-equivariant local system $\cE$ on $\cO_\lambda$. The same holds for $\cO_\lambda^\omega$ if $\lambda$ has only even parts.

(2) Suppose that $\lambda_i-\lambda_{i+1}\leq 2$ for all $i$. Let $f_\lambda$ be the number of different sizes of parts of $\lambda$, and $g_\lambda$ the number of $i$'s such that $\lambda_{i}-\lambda_{i+1}=2$.
\begin{enumerate}
\item[(a)] If at least one part of $\lambda$ is odd, then    
 there are $2^{f_\lambda-1-g_\lambda}$ irreducible $K$-equivariant local systems $\cE$ on $\cO_\lambda$ such that $\on{supp}\fF(\IC(\cO_\lambda,\cE))=\Lg_1$.
 \item[(b)] If all parts of $\lambda$ are even, then there is exactly one irreducible $K$-equivariant local system $\cE^\omega$ on each orbit $\cO_\lambda^{\omega}$, $\omega=\I,\II$,  such that $\on{supp}\fF(\IC(\cO_\lambda^\omega,\cE^\omega))=\Lg_1$.
 \end{enumerate}
 In particular, if  $\lambda_i-\lambda_{i+1}\leq 1$ for all $i$, then $\on{supp}\fF(\IC(\cO_\lambda,\cE))=\Lg_1$ for any $K$-equivariant local system $\cE$ on $\cO_\lambda$. 

\end{corollary}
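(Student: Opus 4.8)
The plan is to deduce both parts from Corollary~\ref{coro-full supp1}, which reduces the question to recognizing $\IC(\cO_\lambda,\cE)$ as (a summand of) a parabolic induction from one of the chosen pairs $L^m\subset P^m$ or $L^{n,\omega}\subset P^{n,\omega}$.

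\emph{Part (1).} Suppose $\lambda_i-\lambda_{i+1}\geq 3$. Then $\lambda_1,\dots,\lambda_i\geq 3$, so $3i\leq N$ and hence $1\leq i<N/2$ and $L^i$ is one of the chosen Levi subgroups. Put $m=i$, $\alpha=1^m\in\cP(m)$ and $\beta=\lambda-2\cdot 1^m$; the gap condition makes $\beta$ a partition of $N-2m$, and since the values occurring in the first $m$ positions of $\lambda$ all drop by $2$ while staying strictly larger than $\lambda_{i+1}$, the number of distinct parts is preserved: $f_\beta=f_\lambda$. Therefore $A_K(v)\cong(\bZ/2\bZ)^{f_\lambda-1}$ and $A_{L^m_K}(u)\cong A_{\GL(m)}(1^m)\times A_{\SO(N-2m)}(\beta)\cong(\bZ/2\bZ)^{f_\beta-1}$ have the same order, so the surjection $\psi\colon A_K(v)\twoheadrightarrow A_{L^m_K}(u)$ of Proposition~\ref{prop-induction} is an isomorphism. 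Hence every $K$-equivariant local system $\cE$ on $\cO_\lambda$ is of the form $\widetilde{\bC\boxtimes\cE'}$, and Proposition~\ref{prop-induction} exhibits $\IC(\cO_\lambda,\cE)$ as a direct summand of $\on{Ind}_{\Ll^m_1\subset\Lp^m_1}^{\Lg_1}\IC(\cO_{\alpha,\beta},\bC\boxtimes\cE')$. Since $\fF$ commutes with parabolic induction by~\eqref{commutativity}, and $\on{supp}\on{Ind}_{\Ll^m_1\subset\Lp^m_1}^{\Lg_1}(-)\subseteq\Lg_1^m\subsetneq\Lg_1$ (as used in the proof of Corollary~\ref{coro-full supp1}), we get $\on{supp}\fF(\IC(\cO_\lambda,\cE))\subsetneq\Lg_1$. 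The all-even case $\cO_\lambda^\omega$ is identical, carrying the labels $\I,\II$ through Proposition~\ref{prop-induction}.

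\emph{Part (2).} Assume all gaps are $\leq 2$. The ``in particular'' clause is easy: if some $\cE$ had non-full-support Fourier transform then, by Corollary~\ref{coro-full supp1} and Corollary~\ref{coro-ind}, $\IC(\cO_\lambda,\cE)=\on{Ind}^{\Lg_1}_{\Ll^m_1\subset\Lp^m_1}\IC(\cO_{\alpha,\beta},\bC\boxtimes\cE')$ with $m\geq 1$, $\lambda_j=\beta_j+2\alpha_j$, and $\fF(\IC(\cO_\beta,\cE'))$ of full support; taking $j$ maximal with $\alpha_j>0$ gives $\lambda_j-\lambda_{j+1}=(\beta_j-\beta_{j+1})+2\alpha_j\geq 2$, a contradiction. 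In general, the same reduction (using that the Fourier transform on $\Ll^m_1\cong\mathfrak{gl}(m)\oplus\mathfrak{sl}(N-2m)_1$ is the external product of the two factor transforms) shows that the bad local systems on $\cO_\lambda$ are precisely the $\widetilde{\bC\boxtimes\cE'}$ coming from decompositions $\lambda=\beta+2\alpha$ with $m=|\alpha|\geq 1$ and $\fF(\IC(\cO_\beta,\cE'))$ of full support on $\mathfrak{sl}(N-2m)_1$. Given $\lambda_j-\lambda_{j+1}\leq 2$, requiring $\beta=\lambda-2\alpha$ to be a partition forces $\alpha_j-\alpha_{j+1}\in\{0,1\}$ with value $1$ only at the $g_\lambda$ positions where $\lambda$ has gap exactly $2$; so $\alpha$, and with it the decomposition, is indexed by a nonempty subset $T$ of these positions, with $\beta=\beta_T$ obtained from $\lambda$ by turning the chosen gap-$2$'s into gap-$0$'s. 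Write $B_T$ for the associated set of bad local systems.

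A short computation shows $f_{\beta_T}=f_\lambda-|T|$ and $g_{\beta_T}=g_\lambda-|T|$, so $f-1-g$ is a peeling invariant. By induction on $N$ (base $N\leq 2$ immediate), $\cO_{\beta_T}$ carries $2^{f_{\beta_T}-1-g_{\beta_T}}=2^{f_\lambda-1-g_\lambda}$ full-support local systems, and since $\psi_T$ is surjective its pullback is injective, so $|B_T|=2^{f_\lambda-1-g_\lambda}$. The sets $B_T$ are pairwise disjoint: if $\cE\in B_T\cap B_{T'}$ then by Corollary~\ref{coro-ind} the complex $\IC(\cO_\lambda,\cE)$ is simultaneously $\on{Ind}_{\Ll^{|\alpha_T|}_1\subset\Lp^{|\alpha_T|}_1}^{\Lg_1}$ and $\on{Ind}_{\Ll^{|\alpha_{T'}|}_1\subset\Lp^{|\alpha_{T'}|}_1}^{\Lg_1}$ of the corresponding complexes, so applying $\fF$ and Proposition~\ref{prop-induction2} forces the two $\IC(\Lg_1^m,\cT(\nu;\dots))$ to agree; comparing supports and then local systems, and using the injectivity of $\fF$ from Theorem~\ref{main theorem} together with the classical Springer bijection $\nu\leftrightarrow\alpha$ on $\mathfrak{gl}$, yields $\alpha_T=\alpha_{T'}$ and hence $T=T'$. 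Thus the bad locus has $(2^{g_\lambda}-1)2^{f_\lambda-1-g_\lambda}$ elements, and since there are $2^{f_\lambda-1}$ local systems in all, the full-support locus has $2^{f_\lambda-1}-(2^{g_\lambda}-1)2^{f_\lambda-1-g_\lambda}=2^{f_\lambda-1-g_\lambda}$ elements, proving (2)(a). For (2)(b): an all-even $\lambda$ with gaps $\leq 2$ has every jump of size $2$ and last part $2$, so $g_\lambda=f_\lambda$; the top peeling ($T$ the full set of gap-$2$ positions) lands in $L^{n,\omega}\cong\GL(n)$ and produces one bad local system by the Springer picture for $\mathfrak{gl}(n)$, every proper nonempty $T$ goes through an $L^m$ ($m<n$) and produces one by induction (all-even orbits carry exactly one full-support local system), disjointness is as above, and on each $\cO_\lambda^\omega$ this leaves $2^{f_\lambda}-(2^{f_\lambda}-1)=1$ full-support local system.

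The main obstacle I anticipate is the disjointness of the $B_T$ and keeping the induction clean through the all-even orbits and the two Levi subgroups $L^{n,\omega}$; the peeling identities $f_{\beta_T}=f_\lambda-|T|$, $g_{\beta_T}=g_\lambda-|T|$ and the external-product structure of $\fF$ on $\Ll^m_1$ are the remaining points requiring care, but are straightforward.
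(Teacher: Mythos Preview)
Your argument is correct, and Part~(1) follows the paper's proof essentially verbatim. For Part~(2), however, you take a genuinely different route.

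The paper argues by induction on $g_\lambda$: each nonempty peeling produces, via the inductive hypothesis and Corollary~\ref{coro-ind}, a batch of $2^{f_\lambda-1-g_\lambda}$ local systems with non-full-support Fourier transform; this yields the upper bound $m_\lambda\le 2^{f_\lambda-1-g_\lambda}$ on the number of full-support ones. The paper does \emph{not} prove directly that distinct peelings produce disjoint batches. Instead it finishes with a global count: Theorem~\ref{main theorem} gives $\sum_\lambda m_\lambda=d(n)$ (or $e(n)$), and the generating-function identity~\eqref{est sum} shows $\sum_\lambda 2^{f_\lambda-1-g_\lambda}$ equals the same number, forcing all inequalities to be equalities.

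You instead induct on $N$ and establish the disjointness of the $B_T$ directly. The key observation is that, unwinding~\eqref{induced IC sheaf} and the classical Springer correspondence on $\mathfrak{gl}(m_T)$, the Fourier transform of a local system in $B_T$ lands on the specific sheaf $\IC(\Lg_1^{m_T},\cT(\nu_T;\mu^1,\mu^2))$ with $\nu_T$ the Springer partner of $\alpha_T$; since distinct $T$'s give distinct $\alpha_T$'s (your parametrisation makes this transparent) and hence distinct $\nu_T$'s, the pairwise non-isomorphism of the right-hand side of Theorem~\ref{main theorem} forces $T=T'$. This bypasses the combinatorial identity~\eqref{est sum} entirely at the cost of tracking the $\nu$-parameter through the induction functor. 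Both approaches lean on Theorem~\ref{main theorem}, but the paper uses only its cardinality statement while you use the explicit labelling of the image; what you gain is a self-contained orbit-by-orbit count, what the paper gains is not having to chase parameters through~\eqref{induced IC sheaf}. The peeling invariant $f_{\beta_T}-1-g_{\beta_T}=f_\lambda-1-g_\lambda$ you flag is indeed the routine check that makes your induction close, and your treatment of the all-even case via the extra Levi $L^{n,\omega}$ (the ``top peeling'') is the correct way to account for the missing subset.
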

\begin{proof}

(1) Assume that $\lambda_{i_0}-\lambda_{i_{0}+1}\geq 3$. Let $m=i_0$, $\alpha=1^{i_0}$, $\beta=(\lambda_1-2,\ldots,\lambda_{i_0}-2,\lambda_{i_0+1},\ldots)$. Then $\cO_\lambda=\on{Ind}_{\Ll^m_1\subset\Lp_1^m}^{\Lg_1}\cO_{\alpha,\beta}$.
Let $u\in\cO_{\alpha,\beta}$ and $v\in\cO_\lambda\cap(u+(\Ln_{P^m})_1)$. Note that $A_{K}(v)\cong A_{L^m_K}(u)$. 
It then follows from Proposition~\ref{prop-induction} that for each irreducible $K$-equivariant local system $\cE$ on $\cO_\lambda$, $\IC(\cO_\lambda,\cE)$ is a direct summand of $\on{Ind}_{\Ll^m_1\subset\Lp_1^m}^{\Lg_1}\IC(\cO_{\alpha,\beta},\cE_0)$ for some irreducible $L_K$-equivariant local system $\cE_0$ on $\cO_{\alpha,\beta}$. As before, this shows that $\fF(\IC(\cO_\lambda,\cE))$ has smaller support.

In the case when $\lambda$ has only even parts, we let $\cO_\lambda^\omega=\on{Ind}_{\Ll^m_1\subset\Lp_1^m}^{\Lg_1}\cO_{\alpha,\beta}^\omega$, if $m<N/2$, and we let $\cO_\lambda^\omega=\on{Ind}_{\Ll^{n,\omega}_1\subset\Lp^{n,\omega}_1}^{\Lg_1}\cO_{\alpha,\beta}$, if $m=N/2=n$, where $\omega=\I,\II$. The proof for $\cO_\lambda^\omega$ then proceeds in the same way.

(2) We argue by induction on $g_\lambda$. If $g_\lambda=0$, then (2) follows from~\eqref{support of induced} and Corollary~\ref{coro-full supp1}. Assume by induction hypothesis  that (2) holds for all $\mu$ with $g_{\mu}<g_\lambda$. 

Assume first that $\lambda$ has at least one odd part. Suppose that $i_1,\ldots,i_k$ are such that $\lambda_{i_j}-\lambda_{i_{j}+1}=2$, where $k=g_\lambda$.  

Let $a=(a_1\geq a_2\geq \cdots\geq a_k\geq 0)$ be a partition such that $a\neq\emptyset$, $a_k\leq 1$, and $a_l\leq a_{l+1}-1$. Note that the number of such partitions is $2^k-1$. Consider a partition $\mu(a)$ such that $\mu_l=\lambda_l-2a_j$ for $l\in[i_{j-1}+1,i_j]$. Then $\mu(a)$ satisfies that $\mu(a)_i-\mu(a)_{i+1}\leq 2$ and $g_{\mu(a)}<g_\lambda$. Moreover, $\mu$ has at least one odd part, and $f_{\lambda}-g_\lambda=f_{\mu(a)}-g_{\mu(a)}$. Let $m=\sum_{j=1}^k{i_j}$. We have that
\beqn
\on{Ind}_{\Ll^m_1\subset\Lp^m_1}^{\Lg_1}\cO_{a,\mu(a)}=\cO_\lambda.
\eeqn
By induction hypothesis, there are $2^{f_\lambda-g_\lambda-1}$ irreducible $K$-equivariant local systems $\cE$ on $\cO_{a,\mu(a)}$ such that $\fF(\on{IC}(\cO_{a,\mu(a)},\cE)$ has full support. By Corollary~\ref{coro-ind}, we have that 
 \beqn
\on{Ind}_{\Ll^m_1\subset\Lp^m_1}^{\Lg_1}\on{\IC}(\cO_{a,\mu(a)},\cE)=\on{\IC}(\cO_\lambda,\tilde\cE).
\eeqn
This gives rise to $(2^{k}-1)\cdot2^{f_\lambda-g_\lambda-1}=2^{f_\lambda-1}-2^{f_\lambda-g_\lambda-1}$ irreducible $K$-equivariant local systems $\tilde\cE$ on $\cO_{\lambda}$ such that $\fF(\on{IC}(\cO_{\lambda},\tilde\cE)$ has smaller support (with $a$ varying). 

The case when all parts of $\lambda$ even can be argued in the same way. Note that in this case $g_\lambda=f_\lambda$.

Let us write $m_\lambda$ (resp. $m_\lambda^\omega$, $\omega=\I,\II$) for the number of irreducible $K$-equivariant local systems $\tilde\cE$ on $\cO_{\lambda}$ (resp. $\cO_{\lambda}^\omega$) such that $\fF(\on{IC}(\cO_{\lambda},\tilde\cE)$ (resp. $\fF(\on{IC}(\cO_{\lambda}^\omega,\tilde\cE)$) has full support when at least one part of $\lambda$ is odd (resp. when all parts of $\lambda$ are even).

We conclude from the discussion above that
\beq\label{estimate}
\begin{gathered}
m_\lambda\leq 2^{f_\lambda-g_\lambda-1}\text{ if }\lambda\text{ has at least one odd part},\\\text{ resp. }
m_\lambda^\omega\leq 1\text{ if all parts of }\lambda\text{ are even}.
\end{gathered}
\eeq
Theorem~\ref{main theorem} implies that the number of pairs $\on{IC}(\cO,\cE)\in\cA_N$ such that $\on{supp}\fF(\on{IC}(\cO,\cE))=\Lg_1$ is $d(n)$ (see~\eqref{def of d(k)}), when $N=2n+1$, and $e(n)$ (see~\eqref{def of e(k)}), when $N=2n$. 
In view of~\eqref{estimate} and claim (1) of the corollary, it suffices to show that
\beq\label{est sum}
\sum_{\substack{\lambda\in\cP(2n+1)\\\lambda_i-\lambda_{i+1}\leq 2}}2^{f_\lambda-g_\lambda-1}=d(n)
,\ \ 
\sum_{\substack{\lambda\in\cP(2n),\lambda_i-\lambda_{i+1}\leq 2,\\\text{ not all parts of $\lambda$ even}}}2^{f_\lambda-g_\lambda-1}+2q(n)=e(n).
\eeq
This can be seen as follows.  Note that when $N$ is even, the number of  orbits of the form $\cO_\lambda^\omega$, where all parts of $\lambda$ are even and $\lambda_i-\lambda_{i+1}\leq 2$,  is $2q(n)$.
We know that 
\beqn
\text{$d(n)$ = Coefficient of $x^{2n+1}$ in }\frac{1}{2}\prod_{s\geq 1}(1+x^s)^2.
\eeqn
\beqn
e(n)=\frac{3}{2}q(n)+\text{Coefficient of $x^{2n}$ in } \frac{1}{2}\prod_{s\geq 1}(1+x^s)^2.
\eeqn
A partition $\lambda$ satisfies that $\lambda_i-\lambda_{i+1}\leq 2$ if and only if each part of the transpose partition $\lambda'$ has multiplicity at most  $2$. We have $f_\lambda=f_{\lambda'}$ and $g_\lambda$ equals the number of parts in $\lambda'$ with multiplicity $2$. 
It is easy to see that each $\lambda'$ whose parts have multiplicity at most $2$ appears in $\prod_{s\geq 1}(1+x^s)^2$ exactly $2^{f_\lambda-g_\lambda}$ times. Hence~\eqref{est sum} follows.
\end{proof}

\begin{remark}
In \cite[Conjecture 1.2]{CVX1}, we conjectured that one can obtain 
all nilpotent orbital complexes by induction
from those of smaller groups whose Fourier transforms have full support. 
This conjecture follows from Corollary \ref{coro-full supp1}.

\end{remark}

\section{Cohomology of Hessenberg varieties}\label{sec-Hess}

Hessenberg varieties, defined generally in \cite{GKM}, arise naturally in our setting (for details, see \cite{CVX2}). In particular, they arise as fibers of maps $\pi$ and $\check{\pi}$ in the following diagram
\beqn
\xymatrix{&K/P_K\times\Lg_1&\\K\times^{P_K}E\ar[d]^-{\pi}\ar[ur]&&K\times^{P_K}E^\perp\ar[d]_-{\check{\pi}}\ar[ul]\\\cN_1&&\Lg_1}
\eeqn
where $P_K=P\cap K$ for a $\theta$-stable parabolic subgroup $P$ of $G$, $E$ is a $P_K$-stable subspace of $\Lg_1$ consisting of nilpotent elements, and $E^\p$ is the orthogonal complement of $E$ in $\Lg_1$ via a $K$-invariant non-degenerate form on $\Lg_1$. The generic fibers of maps $\check{\pi}$ are Hessenberg varieties. 

In this section we discuss an application of our result to 
cohomology of Hessenberg varieties. 
Let us fix $s\in\fg_1^{rs}$ and consider the corresponding Hessenberg 
variety \[\on{Hess}:=\check\pi^{-1}(s)=\{gP_K\in K/P_K\,|\,g^{-1}sg\in E^\perp\}.\] 
The centralizer $Z_K(s)$ acts naturally on $\on{Hess}$ and it induces 
an action of the component group $\pi_0(Z_K(s))\cong I_N$ on the 
cohomology groups $H^*(\on{Hess},\bC)$. Let 
\[\oh^*(\on{Hess},\bC)=\bigoplus_{\chi\in I_N^\vee} \oh^*(\on{Hess},\bC)_\chi\]
be the eigenspace decomposition with respect to the action 
of $I_N$.

\begin{definition}
The stable part $\oh^*(\on{Hess},\bC)_{\on{st}}$ of $\oh^*(\on{Hess},\bC)$ is the 
direct summand\linebreak $\oh^*(\on{Hess},\bC)_{\chi_{\on{triv}}}$ where $\chi_{\on{triv}}\in I_N^\vee$
is the trivial character. 
\end{definition}

For simplicity we now assume $\check\pi$ is onto. 
In this case $\check\pi$ is 
smooth over $\fg_1^{rs}$ (e.g. see \cite[Lemma 2.1]{CVX2}) and  
the equivariant fundamental group $\pi_1^K(\fg_1^{rs},s)\cong I_N\rtimes B_N$
acts on $H^*(\on{Hess},\bC)$ by the monodromy action. 
Recall that for $\chi\in I_N^\vee$, $B_\chi$
stands for the stabilizer of $\chi$ in $B_N$.
Clearly, each summand $H^*(\on{Hess},\bC)_\chi$ is 
stable under the action of 
$B_{\chi}$.
Let $\chi_m\in I_N^\vee$, $B_{\chi_m}$, and 
$B_{m,N-m}$ be as in \S\ref{ssec-representations of tB}.
Assume that $\chi$ is in the $B_N$-orbit of $\chi_m$. 
Then for any $b\in B_N$ with $b.\chi=\chi_m$ we have an 
isomorphism $\iota_b:B_\chi\cong B_{\chi_m}, u\to bub^{-1}$.
Note that
$\chi_{\on{triv}}=\chi_0$ and 
$B_{\chi_m}=B_{m,N-m}$ except 
when $N$ is even and $m=N/2$. 
In that case,  $B_{m,N-m}$ is an index two subgroup of $B_{\chi_m}$.

Recall the 
algebra $\cH_{\chi_m,-1}=\cH_{m,-1}\otimes \cH_{N-m,-1}$
and their representations $D_{\mu^1}\otimes D_{\mu^2}$ introduced in \S\ref{ssec-representations of tB}. 
Each $\cH_{\chi_m,-1}$ is a quotient of the group algebra $\bC[B_{m,N-m}]$
and  $\cH_{\chi_{0},-1}=\cH_{\chi_{\on{triv}},-1}=\cH_{N,-1}$
is the Hecke algebra of $S_N$ at $q=-1$.

\begin{theorem}\label{coh of Hess}
\begin{enumerate}
\item 
Let $\chi_m\in I_N^\vee$ be the representatives
of $B_N$-orbtis in \S\ref{ssec-representations of tB}.
To every $\chi\in I_N^\vee$ in the orbit of $\chi_m$ and 
an element $b\in B_N$ satisfying $b(\chi)=\chi_m$, 
the monodromy action of $b$ on $\oh^*(\on{Hess},\bC)$ induces 
an isomorphism 
$\oh^{*}(\on{Hess},\bC)_\chi\cong \oh^{*}(\on{Hess},\bC)_{\chi_m}$
compatible with the actions of $B_\chi\stackrel{\iota_b}\cong B_{\chi_m}$ on 
both sides.\\
\item 
The action of $\bC[B_{m,N-m}]$ on
$\oh^*(\on{Hess},\bC)_{\chi_m}$ factors through the algebra $\cH_{\chi_{m},-1}$ 
and the resulting representation is a direct sum of 
$D_{\mu^1}\otimes D_{\mu^2}$, $\mu^1\in\cP_2(m)$, 
$\mu^2\in\cP_2(N-m)$.
In particular, the stable part $\oh^*(\on{Hess},\bC)_{\on{st}}$
is generated by irreducible representations of 
the Hecke algebra of $S_N$ at $q=-1$.
\end{enumerate}

\end{theorem}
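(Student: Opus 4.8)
The plan is to deduce part~(1) from the formalism of the monodromy action, and part~(2) by first identifying the $\widetilde{B}_N$-module $\oh^*(\on{Hess},\bC)$ via the decomposition theorem, Fourier transform and Theorem~\ref{main theorem}, and then computing the $\chi_m$-eigenspaces of the modules $V_{\mu^1,\mu^2}$ and $V_{\mu,\mu}^\omega$ directly from their definitions. For part~(1): since $\check\pi$ is proper and (being onto) smooth over $\Lg_1^{rs}$, the local system $(\check\pi_*\bC)|_{\Lg_1^{rs}}$ has stalk $\oh^*(\on{Hess},\bC)$ at $s$, and its monodromy makes this a module over $\pi_1^K(\Lg_1^{rs},s)\cong I_N\rtimes B_N$ whose restriction to $I_N=\pi_0(Z_K(s))$ is the natural action, so the summands $\oh^*(\on{Hess},\bC)_\chi$ are precisely the $I_N$-eigenspaces. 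Writing $b(\chi)$ for the character $a\mapsto\chi(b^{-1}ab)$, for $a\in I_N$, $b\in B_N$ and $v\in\oh^*(\on{Hess},\bC)_\chi$ we have $a\cdot(b\cdot v)=b\cdot\big((b^{-1}ab)\cdot v\big)=b(\chi)(a)\,(b\cdot v)$, so $b$ carries $\oh^*(\on{Hess},\bC)_\chi$ isomorphically onto $\oh^*(\on{Hess},\bC)_{b(\chi)}$; taking $b$ with $b(\chi)=\chi_m$ gives the isomorphism, and compatibility with $\iota_b\colon B_\chi\to B_{\chi_m},\ u\mapsto bub^{-1}$, is immediate from $b\cdot(u\cdot v)=(bub^{-1})\cdot(b\cdot v)$.

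For part~(2) I first pin down the $\widetilde{B}_N$-module structure. The complex $\check\pi_*\bC[\dim]$ is pure, hence by the decomposition theorem a direct sum of shifts of $K$-equivariant IC sheaves with semisimple local systems. Fourier transform intertwines $\check\pi_*\bC_{K\times^{P_K}E^\perp}$ with $\pi_*\bC_{K\times^{P_K}E}$ up to a shift, since $K\times^{P_K}E$ and $K\times^{P_K}E^\perp$ are orthogonal complementary subbundles of $(K/P_K)\times\Lg_1$ (the same mechanism as in~\eqref{FT in prop}); as $E$ consists of nilpotent elements, the latter is supported on $\cN_1$ and is a sum of shifted nilpotent orbital complexes. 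By Theorem~\ref{main theorem} the Fourier transform of each such complex has the form $\IC(\Lg_1^{m'},\cT(\nu;\dots))$, which has full support exactly when $\nu=\emptyset$, giving $\IC(\Lg_1,\cT(\emptyset;\mu^1,\mu^2))$ with $\mu^1\neq\mu^2$, or $\IC(\Lg_1,\cT(\emptyset;\mu,\mu)^\omega)$. All IC summands with non-full support restrict to $0$ over $\Lg_1^{rs}$, because the supports $\Lg_1^{m'}$ and $\Lg_1^{n,\omega}$ with $m',n\geq1$ consist of elements with repeated eigenvalues (recall $\overline{Y^r_{m'}}=\Lg_1^{m'}$) and so miss $\Lg_1^{rs}$. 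Hence $(\check\pi_*\bC)|_{\Lg_1^{rs}}$ is a direct sum of shifted copies of the $\cT(\emptyset;\mu^1,\mu^2)$ and $\cT(\emptyset;\mu,\mu)^\omega$, and passing to stalks yields an isomorphism of $\widetilde{B}_N$-modules
\[
\oh^*(\on{Hess},\bC)\ \cong\ \bigoplus_{\mu^1\neq\mu^2} V_{\mu^1,\mu^2}^{\oplus n_{\mu^1,\mu^2}}\ \oplus\ \bigoplus_{\mu,\,\omega}\big(V_{\mu,\mu}^\omega\big)^{\oplus n_{\mu}^\omega}
\]
for suitable multiplicities $n_\bullet\geq0$.

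Now I compute the $\chi_m$-eigenspaces. From the model~\eqref{induced module L}, $V_{\mu^1,\mu^2}=\bigoplus_{\bar b\in B_N/B_{k,N-k}}b\otimes(D_{\mu^1}\otimes D_{\mu^2})$ with $k=|\mu^1|$, the summand indexed by $\bar b$ being the $b(\chi_k)$-eigenspace of $I_N$; since $\chi_k$ and $\chi_m$ lie in the same $B_N$-orbit only for $k=m$ (both indices being $\leq[N/2]$), $(V_{\mu^1,\mu^2})_{\chi_m}$ is nonzero only if $|\mu^1|=m$, and then by~\eqref{stabilizer in sn}--\eqref{stabilizer in bn} the contributing summands form a single $B_{m,N-m}$-orbit (one coset if $m\neq N/2$; two cosets, swapped by $B_{\chi_m}/B_{m,N-m}$, if $m=N/2$), on which $B_{m,N-m}$ acts through the quotient~\eqref{quotient}. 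Thus $(V_{\mu^1,\mu^2})_{\chi_m}$ is a sum of modules of the form $D_{\mu^1}\otimes D_{\mu^2}$ over $\bC[B_{m,N-m}]$, and likewise, using~\eqref{two modules}, $(V_{\mu,\mu}^\omega)_{\chi_n}\cong D_\mu\otimes D_\mu$ as $\bC[B_{n,n}]$-modules when $N=2n$. Combined with the displayed decomposition this shows that the $\bC[B_{m,N-m}]$-action on $\oh^*(\on{Hess},\bC)_{\chi_m}$ factors through $\cH_{\chi_m,-1}$ and is a direct sum of the $D_{\mu^1}\otimes D_{\mu^2}$ with $\mu^1\in\cP_2(m)$, $\mu^2\in\cP_2(N-m)$; specializing to $m=0$, where $\cH_{\chi_0,-1}=\cH_{N,-1}$ and $D_\emptyset\otimes D_{\mu^2}=D_{\mu^2}$, gives the assertion about $\oh^*(\on{Hess},\bC)_{\on{st}}$.

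The main obstacle is the geometric input in the second paragraph: one must check that Fourier transform identifies $\check\pi_*\bC_{E^\perp}$ with $\pi_*\bC_E$ with the correct shift and $K$-equivariance, and that the decomposition-theorem summands not supported on all of $\Lg_1$ genuinely disappear upon restriction to $\Lg_1^{rs}$ (where the eigenvalue observation about $Y^r_{m'}$ enters). Granting this, the rest is routine Mackey bookkeeping; the only delicate case is $N=2n$, $m=n$, where $B_{n,n}$ sits inside $B_{\chi_n}$ with index two and one must keep track of the labels $\omega$.
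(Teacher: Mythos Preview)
Your proof is correct and follows essentially the same route as the paper: apply the decomposition theorem to $\pi_*\bC$ (or equivalently $\check\pi_*\bC$), use the Fourier transform relation $\fF(\pi_*\bC)\cong\check\pi_*\bC$ together with Theorem~\ref{main theorem} to identify the generic stalk $\oh^*(\on{Hess},\bC)$ as a sum of the $\widetilde B_N$-modules $V_{\mu^1,\mu^2}$ (and $V_{\mu,\mu}^\omega$), and then read off the $\chi_m$-eigenspace from the explicit induced-module description. You supply more detail than the paper---spelling out part~(1), justifying why non-full-support summands vanish over $\Lg_1^{rs}$, and handling the $m=N/2$ case carefully---but the architecture is identical.
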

\begin{proof}
Part (1) is clear. 
To prove part (2) we proceed as follows. 
By the decomposition theorem
$\pi_*\bC$ is a direct sum of shifts 
of nilpotent orbital complexes. Since 
$\fF(\pi_*\bC)\cong\check{\pi}_*\bC$ (up to shift), 
Theorem~\ref{main theorem} implies that
a generic stalk of 
$\check{\pi}_*\bC$,
which is isomorphic to $H^*(\on{Hess},\bC)$,
is a direct sum of 
the local systems 
 $V_{\mu^1,\mu^2}=\on{Ind}_{\bC[B_{m,N-m}]}^{\bC[B_N]}D_{\mu_1}\otimes D_{\mu_2}$
introduced in (\ref{induced module L}). 
Since $I_N$ acts on $V_{\mu^1,\mu^2}$ by the formula 
$a.(b\otimes v)=\left((b.\chi_m)(a)\right)( b\ot v)$ for $a\in I_{N}$, $b\in B_{N}$ and $v\in D_{\mu_1}\otimes D_{\mu_2}$, we have 
$(V_{\mu^1,\mu^2})_\chi\cong
D_{\mu_1}\otimes D_{\mu_2}$. The theorem follows.
\end{proof}

\begin{example}
Let $C$ be the hyper-elliptic curve with affine equation $y^2=\prod_{j=1}^N(x-a_j)$ (here $a_i\neq a_j$ for $i\neq j$). 
Assume 
$N=2n+2$ is even. Then according to \cite[Section 2.3]{CVX3} the Jacobian $\on{Jac}(C)$ is 
an example of Hessenberg variety and 
the monodromy action of 
$\pi_1(\fg_1^{rs},s)$ factors through $B_N$, that is,
$\oh^*(\on{Jac}(C),\bC)=\oh^*(\on{Jac}(C),\bC)_{\on{st}}$.
Let $\mu_k=(N-k,k)\in\cP_2(N)$ and $D_{\mu_k}$ be the corresponding 
representation of $\cH_{N,-1}$. 
Using \cite{A}, one can check that the induced action of the group algebra 
$\bC[B_N]$ on 
$\oh^i(\on{Jac}(C),\bC)$
factors through $\cH_{N,-1}$ and for $i\leq n$ the resulting representation of $\cH_{N,-1}$
is isomorphic to 
\beqn
\oh^i(\on{Jac}(C),\bC)\cong\bigoplus_{j=0}^{[i/2]} D_{\mu_{i-2j}}
\eeqn
with the primitive part
$
\oh^i(\on{Jac}(C),\bC)_{\mathrm{prim}}\cong D_{\mu_i}$.

\end{example}

\begin{remark}
It would be nice to have an explicit  
decomposition of $H^*(\on{Hess},\bC)_{\chi_m}$ into irreducible representations of $\cH_{\chi_m,-1}$. 
For this one  needs finer information for the bijection in Theorem \ref{main theorem}
(see Section \ref{sec-conj}).
In \cite{CVX1,CVX3}, we establish an explicit bijection
for certain nilpotent orbital complexes and we 
work out an explicit decomposition for the cohomology of the Hessenberg varieties that are isomorphic to Fano varieties of $k$-planes in smooth complete intersections of 
two quadrics in projective space. 
\end{remark}

\section{Representations of \texorpdfstring{$\cH_{N,-1}$}{lg}}\label{Rep}
In this section we show that \emph{all} irreducible representations 
of the Hecke algebra $\cH_{N,-1}$ come from geometry. Indeed they all appear in intersection cohomology of a Hessenberg variety with coefficient in a local system. In particular, this shows that all irreducible representations of $\cH_{N,-1}$ carry a Hodge structure. In particular, the irreducible representations of $\cH_{N,-1}$ can be viewed as variations of Hodge structure.

Let $\cO$ be a nilpotent $K$-orbit on $\fg_1$ and $\cL$ an irreducible 
$K$-equivariant local system on $\cO$.
We call $(\cO,\cL)$ a nilpotent pair.
Following \cite{LY}, we associate to each nilpotent pair $(\cO,\cL)$ two families of 
Hessenberg varieties $\on{Hess}_{\cL,\pm 1}\to\fg_1$ together with 
local systems $\hat\cL_{\pm 1}$ on open subsets $\accentset{\circ}{\on{Hess}}_{\cL,\pm1}\subset
\on{Hess}_{\cL,\pm 1}$.

Let $x\in\fg_1$ be a nilpotent element in $\cO$.
Choose a normal $\on{sl}_2$-triple $\{x,h,y\}$ 
and let 
\beqn
\text{$\fg(i)=\{v\in\fg|[h,v]=iv\}$, $\fg_0(i)=\fg(i)\cap\fg_0$, and $\fg_1(i)=\fg(i)\cap\fg_1$.}
\eeqn
For any $N\in\bZ$ we write $\underline N\in\{0,1\}$ for its image in $\bZ/2\bZ$.
Define 
\beqn
\text{$\fp_N^x=\bigoplus_{k\geq 2N}\fg_{\underline N}(k)$, $\fl_N^x=\fg_{\underline N}(2N)$,
and $\fl^x=\bigoplus_{N\in\bZ}\fl_N^x$.}
\eeqn
One can check that 
$\fl^x\subset\fg$ is a graded Lie subalgebra of $\fg$ and $x\in\fl_1^x=\fg_1(2)$.
Let $L_0^x\subset K$ be the reductive subgroup with Lie algebra 
$\fl_0^x=\fg_0(0)$. By \cite[2.9(c)]{LY}, the restriction 
\beqn
\cL'_1:=\cL|_{\fl_{1}^x}
\eeqn
is an irreducible $L_0^x$-equivariant local system 
on the unique open $L_0^x$-orbit
$\accentset{\circ}{\fl^{x}_{1}}$
on $\fl^x_{1}$.

According to \cite{L1}, there exists a graded parabolic subalgebra 
$\mathfrak q=\bigoplus_{N\in\bZ}\mathfrak q_N$ of $\fl^x$, a Levi subalgebra $\fm
=\bigoplus_{N\in\bZ}\fm_N$ of $\mathfrak q$, and a 
cuspidal local system $\cL_1$ on the open $M_0$-orbit
$\accentset{\circ}{\mathfrak m}_1$ of $\fm_1$ (here $M_0$ is the reductive subgroup of $L_0^x$
with Lie algebra $\fm_0$) such that 
\beqn
\text{some shift of 
the IC-complex $\on{IC}(\fl_1^x,\cL_1')$ is a direct summand of 
$\on{Ind}_{\fm_1\subset\mathfrak q_1}^{\fl_1^x}\on{IC}(\fm_1,\cL_1)$.} 
\eeqn
In addition, we have  
\beqn
\mathfrak F(\on{IC}(\fm_1,\cL_1))\cong\on{IC}(\fm_{-1},\cL_{-1})
\eeqn
where $\cL_{-1}$ is a cuspical local system on the unique open orbit $\accentset{\circ}{\fm}_{-1}\subset\fm_{-1}$.

Define $\hat{\mathfrak q}_N$ to be the pre-image of 
$\mathfrak q_N$ under the projection map
$\fp_N^x\to\fl_N^x$. 
Let $Q_K\subset K$ be the parabolic subgroup with Lie algebra 
$\hat{\mathfrak q}_0$. Denote by $\accentset{\circ}{\hat{\mathfrak q}}_{\pm 1}$ the preimage of 
$\accentset{\circ}{\fm}_{\pm1}$ under the projection map 
$\hat{\mathfrak q}_{\pm 1}\to\mathfrak q_{\pm 1}\to \mathfrak m_{\pm 1}$.
The group $Q_K$ acts naturally on
$\hat{\mathfrak q}_{\pm 1}$ and $\accentset{\circ}{\hat{\mathfrak q}}_{\pm 1}$
and 
we define 
\[\on{Hess}_{\cL,\pm 1}:=K\times^{Q_K}\hat{\mathfrak q}_{\pm 1},\\\ 
\accentset{\circ}{\on{Hess}}_{\cL,\pm 1}:=K\times^{Q_K}\accentset{\circ}{\hat{\mathfrak q}}_{\pm 1}.\]
Let 
\beqn
\pi_{\cL,\pm1}:\on{Hess}_{\cL,\pm 1}\to\fg_1, (x,v)\to xvx^{-1}
\eeqn and 
let $\accentset{\circ}{\pi}_{\cL,\pm1}$ be its restriction to $\accentset{\circ}{\on{Hess}}_{\cL,\pm 1}$. 
For any $s\in\fg_1$, we denote by 
$\on{Hess}_{\cL,\pm1,s}$ and $\accentset{\circ}{\on{Hess}}_{\cL,\pm1,s}$ the fiber of 
$\pi_{\cL,\pm1}$ and $\accentset{\circ}{\pi}_{\cL,\pm1}$ over $s$, respectively.

There are natural maps 
\[h_{\cL,\pm 1}:\on{Hess}_{\cL,\pm 1}\to[\fm_{\pm1}/M_0], \ \ \ 
\accentset{\circ}{h}_{\cL,\pm 1}:\accentset{\circ}{\on{Hess}}_{\cL,\pm 1}\to[\accentset{\circ}{\fm}_{\pm1}/M_0]\]
sending $(k,v)\in\on{Hess}_{\cL,\pm 1}=K\times^{Q_K}\hat{\mathfrak q}_{\pm 1}$ to 
$\bar v$, the image of $v\in\hat{\mathfrak q}_{\pm 1}$ under the map
$\hat{\mathfrak q}_{\pm 1}\to\fm_{\pm1}\to
[\fm_{\pm1}/M_0]$.
We define the following local system 
\[\hat\cL_{\pm1}:=(\accentset{\circ}{h}_{\cL,\pm 1})^*\cL_{\pm 1}\]
on $\accentset{\circ}{\on{Hess}}_{\cL,\pm 1}$. 
Here we view the $M_0$-local systems $\cL_{\pm1}$ as sheaves on $[\accentset{\circ}{\fm}_{\pm1}/M_0]$.

\begin{example}
Consider the nilpotent pair $(\cO,\cL=\cL_{\on{triv}})$ where 
$\cL_{\on{triv}}$ is the trivial local system on $\cO$. 
Using \cite[Proposition 7.3]{L1} one can check that in this case 
$\mathfrak q=\oplus_{N\in\bZ}\mathfrak q_N$ is a Borel 
subalgebra of $\mathfrak l^x$
and $\mathfrak m=\oplus_{N\in\bZ}\mathfrak m_N$ is a Cartan subalgebra.
Moreover the grading on $\mathfrak m$ is concentrated in 
degree zero, i.e., $\mathfrak m=\mathfrak m_0$, and the 
cuspidal local system $\cL_{\pm 1}$ is the skyscraper sheaf supported 
on $\mathfrak m_{\pm 1}=\{0\}$. It follows that  in this case 
${\on{Hess}}_{\cL_{\on{triv}},\pm 1}=\accentset{\circ}{\on{Hess}}_{\cL_{\on{triv}},\pm 1}$
and $\hat\cL_{\pm1}$ is the constant local system.
\end{example}

In \cite[\S 7]{LY}, the authors prove the following:
\beq\label{LY2}
\begin{gathered}
(\pi_{\cL,-1})_*\on{IC}(\on{Hess}_{\cL,-1},\hat\cL_{-1})\ \ 
\text{is the Fourier transform of}\ \
(\pi_{\cL,1})_*\on{IC}(\on{Hess}_{\cL,1},\hat\cL_1).
 \end{gathered}
\eeq
\beq\label{LY}
\begin{gathered}
\text{Some shift of}\ \on{IC}(\cO,\cL)\ (\text{resp. the Fourier transform of}\ \on{IC}(\cO,\cL))
\ \text{appears in} 
\\
(\pi_{\cL,1})_*\on{IC}(\on{Hess}_{\cL,1},\hat\cL_1) 
\ (\on{resp.}\ \  (\pi_{\cL,-1})_*\on{IC}(\on{Hess}_{\cL,-1},\hat\cL_{-1})
)\  
\text{as a direct summand.}
\end{gathered}
\eeq

Assume from now on that $\pi_{\cL,-1}:\on{Hess}_{\cL,-1}\to\fg_1$ is surjective. Then the sheaf\linebreak $(\pi_{\cL,-1})_*\on{IC}(\on{Hess}_{\cL,-1},\hat\cL_{-1})$ is smooth over $\fg_1^{rs}$. One sees this as follows. According to the first statement of~\eqref{LY2} the characteristic variety of $(\pi_{\cL,-1})_*\on{IC}(\on{Hess}_{\cL,-1},\hat\cL_{-1})$ coincides with that of $(\pi_{\cL,1})_*\on{IC}(\on{Hess}_{\cL,1},\hat\cL_1)$ as they are Fourier transforms of each other. But\linebreak $(\pi_{\cL,1})_*\on{IC}(\on{Hess}_{\cL,1},\hat\cL_1)$ is $K$-equivariant and supported on the nilpotent cone. A straightforward calculation then shows the smoothness of $(\pi_{\cL,-1})_*\on{IC}(\on{Hess}_{\cL,-1},\hat\cL_{-1})$ on $\fg_1^{rs}$. Thus, by the decomposition theorem, we conclude that:
\beq
\text{$(\pi_{\cL,-1})_*\on{IC}(\on{Hess}_{\cL,-1},\hat\cL_{-1})|_{\Lg_1^{rs}}$ is a direct sum of shifts of irreducible local systems}\,.
\eeq
In addition, the $\on{IC}(\on{Hess}_{\cL,-1},\hat\cL_{-1})$ and hence $(\pi_{\cL,-1})_*\on{IC}(\on{Hess}_{\cL,-1},\hat\cL_{-1})$ has a canonical structure as a Hodge module and thus the direct summands are $\on{IC}$-extensions of irreducible variations of pure Hodge structure, see, \cite{S}.

We fix a generic $s\in\fg_1^{rs}$ and then 
\beq\label{stalk}
H^*((\pi_{\cL,-1})_*\on{IC}(\on{Hess}_{\cL,-1},\hat\cL_{-1}))_s \ = \ \on{IH^*}(\on{Hess}_{\cL,-1,s},\hat\cL_{-1})\,.
\eeq
Thus we obtain an action of the  fundamental group $\pi_1^K(\fg_1^{rs},s)$ on $\on{IH^*}(\on{Hess}_{\cL,-1,s},\hat\cL_{-1})$ and by the discussion above this action breaks into a direct sum of irreducible representations which are also variations of Hodge structure. 

The component group $\pi_0(Z_K(s))\cong I_N$ acts on 
$\on{IH^*}(\on{Hess}_{\cL,-1,s},\hat\cL_{-1})$ and we write  
\[\on{IH^*}(\on{Hess}_{\cL,-1,s},\hat\cL_{-1})=\bigoplus_{\chi\in I_N^\vee} \on{IH^*}(\on{Hess}_{\cL,-1,s},\hat\cL_{-1})_\chi\]
for the corresponding eigenspace decomposition.

\begin{definition}
The stable part $\on{IH^*}(\on{Hess}_{\cL,-1,s},\hat\cL_{-1})_{\on{st}}$ of 
$\on{IH^*}(\on{Hess}_{\cL,-1,s},\hat\cL_{-1})$ is the direct summand 
$\on{IH^*}(\on{Hess}_{\cL,-1,s},\hat\cL_{-1})_{\chi_{\on{triv}}}$ where $\chi_{\on{triv}}\in I_N^\vee$ is the trivial character.
\end{definition}
Observe that $\on{IH^*}(\on{Hess}_{\cL,-1,s},\hat\cL_{-1})_{\on{st}}$
is stable under the monodromy action of $\pi_1^K(\fg_1^{rs},s)$. Moreover, the action 
factors through the braid group $B_N$ via the quotient map 
$\pi_1^K(\fg_1^{rs},s)\to B_N$.

For every irreducible representation $D_\mu$ of $\cH_{N,-1}$, let $V_\mu$ be the local system on $\fg_1^{rs}$ 
associated to $D_\mu$.
By Theorem \ref{main theorem}, there exists a unique nilpotent 
pair $(\cO_\mu,\cL_\mu)$ such that 
$\fF(\IC(\cO_\mu,\cL_\mu))\cong\IC(\fg_1,V_\mu)$.

\begin{thm}\label{geometric rel of rep}
Let $D_\mu$ be an irreducible representation of $\cH_{N,-1}$ and let 
$(\cO_\mu,\cL_\mu)$ be the associated nilpotent pair as above. 
We have 
\begin{enumerate}
\item
The map $\pi_{\cL_\mu,-1}$ is onto, 
the action of 
the braid group $B_N$ on
$\on{IH^*}({\on{Hess}}_{\cL_\mu,-1,s},\hat\cL_{\mu,-1})_{\on{st}}$ factors through 
the Hecke algebra $\cH_{N,-1}$ and 
$\on{IH^*}({\on{Hess}}_{\cL_\mu,-1,s},\hat\cL_{\mu,-1})_{\on{st}}$
is a direct sum of irreducible representations of $\cH_{N,-1}$.
\item
$D_\mu$ appears in $\on{IH^*}({\on{Hess}}_{\cL_\mu,-1,s},\hat\cL_{\mu,-1})_{\on{st}}$
with non-zero multiplicity.
\end{enumerate}
\end{thm}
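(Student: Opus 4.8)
The plan is to feed the pair $(\cO_\mu,\cL_\mu)$ into the Lusztig--Yun package \eqref{LY2}--\eqref{LY}, identify the resulting local systems on $\fg_1^{rs}$ using Theorem~\ref{main theorem}, and then extract the $\chi_{\on{triv}}$-isotypic part via the explicit description of the $\widetilde B_N$-modules $V_{\mu^1,\mu^2}$ from \S\ref{ssec-representations of tB}.

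First I would verify the surjectivity hypothesis of \S\ref{Rep} for $\cL=\cL_\mu$. The map $\pi_{\cL_\mu,-1}$ is proper: it factors as the closed embedding $\on{Hess}_{\cL_\mu,-1}=K\times^{Q_K}\hat{\mathfrak q}_{-1}\hookrightarrow\fg_1\times K/Q_K$, $(k,v)\mapsto(kvk^{-1},kQ_K)$, which is a closed condition since $\hat{\mathfrak q}_{-1}$ is a linear subspace of $\fg_1$, followed by the proper projection to $\fg_1$ (as $K/Q_K$ is projective); hence the image of $\pi_{\cL_\mu,-1}$ is closed. By \eqref{LY} a shift of $\fF(\IC(\bar\cO_\mu,\cL_\mu))=\IC(\fg_1,V_\mu)$ is a direct summand of $(\pi_{\cL_\mu,-1})_*\IC(\on{Hess}_{\cL_\mu,-1},\hat\cL_{\mu,-1})$. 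Restricting over $\fg_1^{rs}$, where this pushforward is a local system — the characteristic-variety argument in \S\ref{Rep} establishing smoothness over $\fg_1^{rs}$ uses only \eqref{LY2} and the $K$-equivariance of $(\pi_{\cL_\mu,1})_*\IC(\on{Hess}_{\cL_\mu,1},\hat\cL_{\mu,1})$, not surjectivity — the restriction contains $V_\mu\neq0$ as a summand, hence is a nonzero local system, so every fibre of $\pi_{\cL_\mu,-1}$ over $\fg_1^{rs}$ is nonempty. A closed subset of $\fg_1$ containing the dense set $\fg_1^{rs}$ is all of $\fg_1$, so $\pi_{\cL_\mu,-1}$ is onto and the constructions of \S\ref{Rep} apply.

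Next I would determine which local systems occur. The map $\pi_{\cL_\mu,1}$ is proper by the same argument, and $\on{Hess}_{\cL_\mu,1}=K\times^{Q_K}\hat{\mathfrak q}_1$ maps into $\cN_1$ since $\hat{\mathfrak q}_1\subset\cN_1$; so by the decomposition theorem $(\pi_{\cL_\mu,1})_*\IC(\on{Hess}_{\cL_\mu,1},\hat\cL_{\mu,1})$ is a direct sum of shifts of nilpotent orbital complexes. Applying $\fF$ together with \eqref{LY2}, and then Theorem~\ref{main theorem}, the pushforward $(\pi_{\cL_\mu,-1})_*\IC(\on{Hess}_{\cL_\mu,-1},\hat\cL_{\mu,-1})$ is a direct sum of shifts of the IC sheaves on the right-hand side of Theorem~\ref{main theorem}. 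Among these, precisely those of full support $\fg_1$ — the $\IC(\fg_1,\cT(\emptyset;\mu^1,\mu^2))$ with $\mu^1\neq\mu^2$, together with the $\IC(\fg_1,\cT(\emptyset;\mu,\mu)^\omega)$ when $N=2n$ — contribute a nonzero stalk at a generic $s\in\fg_1^{rs}$, and that stalk, as a representation of $\pi_1^K(\fg_1^{rs})=\widetilde B_N$, is the module $V_{\mu^1,\mu^2}$ (resp.\ $V_{\mu,\mu}^\omega$) of \S\ref{ssec-representations of tB}. Hence $\on{IH}^*(\on{Hess}_{\cL_\mu,-1,s},\hat\cL_{\mu,-1})$ is, as a graded $\widetilde B_N$-module, a direct sum of copies of such $V_{\mu^1,\mu^2}$ and $V_{\mu,\mu}^\omega$.

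Finally I would pass to the stable part. For $V_{\mu^1,\mu^2}=\on{Ind}_{\bC[B_{k,N-k}]}^{\bC[B_N]}(D_{\mu^1}\otimes D_{\mu^2})$ with $k=|\mu^1|$, the $I_N$-action $a.(b\otimes v)=((b.\chi_k)(a))(b\otimes v)$ shows the $\chi_{\on{triv}}=\chi_0$-eigenspace is spanned by the $b\otimes v$ with $b.\chi_k=\chi_0$, which is possible only when $k=0$ since the $\chi_m$ represent distinct $B_N$-orbits; in that case $B_{0,N}=B_N$, $I_N$ acts trivially, and $(V_{\emptyset,\mu^2})_{\chi_0}=D_{\mu^2}$, a $B_N$-module on which the action factors through $\cH_{N,-1}$. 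Likewise $(V_{\mu,\mu}^\omega)_{\chi_0}=0$, as there $|\mu|=N/2\neq0$. Therefore $\on{IH}^*(\on{Hess}_{\cL_\mu,-1,s},\hat\cL_{\mu,-1})_{\on{st}}=\on{IH}^*(\on{Hess}_{\cL_\mu,-1,s},\hat\cL_{\mu,-1})_{\chi_0}$ is a direct sum of such $D_{\mu^2}$'s, which gives (1). For (2): by \eqref{LY} a shift of $\IC(\fg_1,V_\mu)$ is a direct summand of $(\pi_{\cL_\mu,-1})_*\IC(\on{Hess}_{\cL_\mu,-1},\hat\cL_{\mu,-1})$, and $V_\mu=V_{\emptyset,\mu}$ by construction, so taking the generic stalk and then the $\chi_0$-part shows $D_\mu=(V_{\emptyset,\mu})_{\chi_0}$ occurs in the stable part with nonzero multiplicity. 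I expect the only genuinely non-formal step to be the surjectivity of $\pi_{\cL_\mu,-1}$, handled above via properness plus the non-vanishing forced by \eqref{LY}; the remainder is bookkeeping, the crucial point being that in the $\chi_0$-isotypic part only the full-support summands with $|\mu^1|=0$ survive.
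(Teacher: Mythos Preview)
Your proposal is correct and follows essentially the same approach as the paper: use \eqref{LY2}--\eqref{LY} to realize the pushforward as the Fourier transform of a sum of nilpotent orbital complexes, apply Theorem~\ref{main theorem} to identify the generic stalk as a sum of the $V_{\mu^1,\mu^2}$, and then compute the $\chi_{\on{triv}}$-isotypic piece exactly as in the proof of Theorem~\ref{coh of Hess}. Your treatment of the surjectivity of $\pi_{\cL_\mu,-1}$ (properness plus the nonzero full-support summand forced by \eqref{LY}) is in fact more explicit than the paper's own proof, which simply asserts part~(1) by reference to Theorem~\ref{coh of Hess} without isolating the surjectivity step.
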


\begin{proof}
Since for every irreducible subrepresentation $W$ of $\on{IH^*}({\on{Hess}}_{\cL_\mu,-1,s},\hat\cL_{\mu,-1})_{\on{st}}$
the corresponding Fourier transform $\fF(\IC(\fg_1,\cW))$ is supported on
the nilpotent cone (here $\cW$ is the local system on $\fg_1^{rs}$ associated to $W$), 
the same argument as in the proof of Theorem \ref{coh of Hess} implies part (1).
Part (2) follows from (\ref{LY2}), (\ref{LY}), and (\ref{stalk}).
\end{proof}

\section{Conjecture on more precise matching}\label{sec-conj}

In Theorem~\ref{main theorem} we show that the Fourier transform establishes a bijection between two sets of intersection cohomology sheaves. In this section we formulate a conjecture which refines the bijection in Theorem~\ref{main theorem}. 
We also relate the conjecture to our earlier conjectures in \cite{CVX2}. Our conjecture is not strong enough to produce an exact matching. The exact description of the bijection is crucial for applications, for example, computing cohomologies of Hessenberg varieties as explained in Section~\ref{sec-Hess}.

We begin with associating to each nilpotent orbit $\cO_\lambda$ (resp. $\cO_\lambda^\omega$, $\omega=\I,\II$) a subset $\Sigma_\lambda\subset\Sigma_{N}$ (resp. $\Sigma_\lambda^\omega\subset\Sigma_{N}$), if $\lambda\in\cP(N)$ has at least one odd part (resp. has only even parts). Let  $\lambda$ be a partition of $N$ and let $\lambda'$ be the transpose partition of $\lambda$. Suppose that
\beq\label{transpose partition}
\lambda'=(\lambda_1')^{2m_1}\cdots(\lambda_{l}')^{2m_l}(\lambda_{l+1}')^{2m_{l+1}-1}\cdots(\lambda_{k}')^{2m_k-1},
\eeq
where $m_i\geq 1$, $i=1,\ldots,k$.
Here and in what follows we write the parts in a partition in the order which is most convenient for us. In particular, in~\eqref{transpose partition} we place the parts with even multiplicity before the parts with odd multiplicity.

Let $\delta_i\in\{0,1\}$ for $i\in[1,l]$ and let
\beqn
\nu(\delta_1,\ldots,\delta_l)=(\lambda_1')^{m_1-\delta_1}\cdots(\lambda_{l}')^{m_l-\delta_l}(\lambda_{l+1}')^{m_{l+1}-1}\cdots(\lambda_{k}')^{m_k-1},
\eeqn
\beqn
\mu(\delta_1,\ldots,\delta_l)=(\lambda_1')^{2\delta_1}\cdots(\lambda_{l}')^{2\delta_l}(\lambda_{l+1}')\cdots(\lambda_{k}').
\eeqn
Note that $2|\nu(\delta_1,\ldots,\delta_l)|+|\mu(\delta_1,\ldots,\delta_l)|=N$. Let 
\beqn
\text{$J\subset J_0:=\{l+1,\ldots,k\}$ such that $\sum_{j\in J}{\lambda'_j}<\sum_{j\in J_0-J}\lambda'_j$.}
\eeqn
 We define
\beqn
\mu^1(\delta_1,\ldots,\delta_l;J)=(\lambda_1')^{\delta_1}\cdots(\lambda_{l}')^{\delta_l}(\lambda_{j_1}')\cdots(\lambda_{j_s}'),\ J=\{j_1,\ldots,j_s\}.
\eeqn
\beqn
\mu^2(\delta_1,\ldots,\delta_l;J)=(\lambda_1')^{\delta_1}\cdots(\lambda_{l}')^{\delta_l}(\lambda_{i_1}')\cdots(\lambda_{i_{k-l-s}}'),\ J_0-J=\{i_1,\ldots,i_{k-l-s}\}.
\eeqn

Note that $\lambda_{l+1}'=0$ if and only if all parts of $\lambda$ are even. In this case, $J_0=\emptyset=J$ and $\mu^1(\delta_1,\ldots,\delta_l;J)=\mu^2(\delta_1,\ldots,\delta_l;J)$ and we write $\mu(\delta_1,\ldots,\delta_l)=\mu^i(\delta_1,\ldots,\delta_l;J)$, $i=1,2$. 

If $\lambda$ has at least one odd part, then let  
\beqn
\begin{gathered}
\Sigma_\lambda:=\{\left(\nu(\delta_1,\ldots,\delta_l);\mu^1(\delta_1,\ldots,\delta_l;J),\mu^2(\delta_1,\ldots,\delta_l;J)\right)|\,\delta_i\in\{0,1\}, i=1,\ldots,l,\\
\qquad \ J\subset \{l+1,\ldots,k\},\text{ such that }\sum_{j\in J}{\lambda'_j}<\sum_{j\in J_0-J}\lambda'_j\}.
\end{gathered}
\eeqn
If all parts of $\lambda$ are even (in which case $\lambda_{l+1}'=0$), then let
\beqn
\Sigma_\lambda^\omega=\{\left(\nu(\delta_1,\ldots,\delta_l);\mu(\delta_1,\ldots,\delta_l),\mu(\delta_1,\ldots,\delta_l)\right)^\omega|\,\delta_i\in\{0,1\}, i=1,\ldots,l\}, \ \omega=\I,\II.
\eeqn
We have $|\Sigma_\lambda|=2^{k-1}$ (resp. $|\Sigma_\lambda^\omega|=2^l$), which equals the number of non-isomorphic irreducible $K$-equvariant local systems on $\cO_\lambda$ (resp. $\cO_\lambda^\omega$).

\begin{conjecture}\label{conjecture}
Let $\lambda$ be a partition of $N$.

(1) If $\lambda$ has at least one odd part, then the Fourier transform $\fF$ induces the following bijection
\beqn
\begin{gathered}
\fF:\{\IC(\cO_\lambda,\cE)\,|\, \cE \text{ irreducible $K$-equivariant local system on $\cO_\lambda$ (up to isomorphism)}\}\\
\xrightarrow{\sim}\{\IC\left(\Lg_1^{|\nu|},\cT\left(\nu;\mu^1,\mu^2\right)\right)|\,(\nu;\mu^1,\mu^2)\in\Sigma_\lambda\}.
\end{gathered}
\eeqn
Moreover, 
\beqn
\fF\left(\IC(\cO_\lambda,\bC)\right)=\IC\left(\Lg_1^{|\nu_0|},\cT\left(\nu_0;\mu^1_0,\mu^2_0\right)\right)
\eeqn
where $(\nu_0;\mu^1_0,\mu^2_0)\in\Sigma_\lambda$ is the unique triple such that $|\nu_0|=\max\{|\nu|,(\nu,\mu^1,\mu^2)\in \Sigma_\lambda\}$ and the parts of $\mu^1_0$ and the parts of $\mu^2_0$ have the opposite parity (in particular, all parts of $\mu_0^i$ have the same parity).

(2) If all parts of $\lambda$ are even, then the Fourier transform induces the following bijection
\beqn
\begin{gathered}
\fF:\{\IC(\cO_\lambda^\omega,\cE)\,|\,\omega=\I,\II,\ \cE \text{ irreducible $K$-equivariant local system on $\cO_\lambda^\omega$ (up to isom)}\}\\
\xrightarrow{\sim}\{\IC\left(\Lg_1^{|\nu|},\cT\left(\nu;\mu,\mu\right)^\omega\right)|\,\omega=\I,\II,\ (\nu;\mu,\mu)^\omega\in\Sigma_\lambda^\omega,\ \mu\neq\emptyset\}\\
\cup \left\{\IC\left(\Lg_1^{n,\omega},\cT\left(\nu;\emptyset,\emptyset\right)\right)|\,\omega=\I,\II,\ (\nu;\emptyset,\emptyset)\in\Sigma_\lambda^\omega\right\}.
\end{gathered}
\eeqn
Moreover, 
\beqn
\fF\left(\IC(\cO_\lambda^\omega,\bC)\right)=\IC\left(\Lg_1^{n,\omega},\cT\left(\nu_0;\emptyset,\emptyset\right)\right)
\eeqn
where $|\nu_0|=n$ and $(\nu_0;\emptyset,\emptyset)\in\Sigma_\lambda$.

\end{conjecture}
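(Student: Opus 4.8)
The plan is to upgrade Theorem~\ref{main theorem} by proving that the bijection $\fF$ is compatible with the partition of $\cA_N$ into blocks indexed by nilpotent orbits. Since $|\Sigma_\lambda|=2^{k-1}$ equals the number of irreducible $K$-equivariant local systems on $\cO_\lambda$ (and $|\Sigma_\lambda^\omega|=2^l$ that on $\cO_\lambda^\omega$), while $\Sigma_N=\bigsqcup_\lambda\Sigma_\lambda$, Lemma~\ref{lemma-count} shows that Conjecture~\ref{conjecture} is equivalent to the following: for every $\lambda$ and every triple $(\nu;\mu^1,\mu^2)\in\Sigma_\lambda$ the nilpotent orbital complex $\fF\bigl(\IC(\Lg_1^{|\nu|},\cT(\nu;\mu^1,\mu^2))\bigr)$ is supported on $\bar\cO_\lambda$ (with the obvious modification in the even-part case), together with the stated identification of $\fF(\IC(\cO_\lambda,\bC))$. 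I would argue by induction on $N$.

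For a triple with $m=|\nu|>0$ the computation reduces to smaller groups. By~\eqref{induced IC sheaf}, $\IC(\Lg_1^m,\cT(\nu;\mu^1,\mu^2))=\on{Ind}_{\Ll^m_1\subset\Lp^m_1}^{\Lg_1}\IC\bigl(\Ll_1,\cK(\rho_\nu\boxtimes V_{\mu^1,\mu^2})\bigr)$, and by~\eqref{LK and l1} the IC sheaf on $\Ll_1\cong\mathfrak{gl}(m)\oplus\mathfrak{sl}(N-2m)_1$ is an external product. Applying $\fF$ factor by factor — the classical Springer correspondence for $\mathfrak{gl}(m)$ sends the local system $\rho_\nu$ to $\IC(\bar\cO_{a},\bC)$ for a partition $a=a(\nu)$ of $m$ (a transpose of $\nu$ in the standard normalization), and the induction hypothesis for $(\SL(N-2m),\SO(N-2m))$ sends $\IC(\mathfrak{sl}(N-2m)_1,V_{\mu^1,\mu^2})$ to $\IC(\bar\cO_b,\cE_b)$ for a definite nilpotent orbit $\cO_b$ — one obtains $\fF\bigl(\IC(\Ll_1,\cK(\rho_\nu\boxtimes V_{\mu^1,\mu^2}))\bigr)=\IC(\bar\cO_{a,b},\bC\boxtimes\cE_b)$, a nilpotent orbital complex on $\Ll_1$. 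Since $\fF$ commutes with parabolic induction~\eqref{commutativity}, we get $\fF\bigl(\IC(\Lg_1^m,\cT(\nu;\mu^1,\mu^2))\bigr)=\on{Ind}_{\Ll^m_1\subset\Lp^m_1}^{\Lg_1}\IC(\bar\cO_{a,b},\bC\boxtimes\cE_b)$, which by Lemma~\ref{lemma-induced orbit} and Proposition~\ref{prop-induction} is supported on $\bar\cO_{\lambda^\dagger}$ with $\lambda^\dagger_i=b_i+2a_i$. It then remains to check the purely combinatorial identity $\lambda^\dagger=\lambda$ for $(\nu;\mu^1,\mu^2)\in\Sigma_\lambda$: with $\nu=\nu(\delta_1,\ldots,\delta_l)$, $a=\nu'$, and $b$ the partition whose transpose is the ordered merge of $\mu^1(\delta;J)$ and $\mu^2(\delta;J)$ (the orbit supplied by the base case below), the relation $b_i+2a_i=\lambda_i$ follows directly from~\eqref{transpose partition}. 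The even-part triples $(\nu;\mu,\mu)^\omega$ and $(\nu;\emptyset,\emptyset)^\omega$ are handled the same way, using the parabolics $P^{n,\omega}$ in place of $P^m$ when $m=N/2$ (for which the needed generalizations of Lemma~\ref{lemma-induced orbit} and Proposition~\ref{prop-induction} hold by the Remark following Proposition~\ref{prop-induction}).

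This reduces the conjecture to the triples with $\nu=\emptyset$ (equivalently $m=0$), which occur in $\Sigma_\lambda$ precisely when $\lambda'$ has all multiplicities $\leq 2$; by Corollary~\ref{coro-full supp2}(2) the corresponding $\IC(\cO_\lambda,\cE)$ are exactly the nilpotent orbital complexes with full-support Fourier transform. For these, Theorem~\ref{Misha} and Lemma~\ref{composition factors} show that $\fF\bigl(\IC(\Lg_1,\cT(\emptyset;\mu^1,\mu^2))\bigr)$ is a composition factor of Grinberg's nearby cycle sheaf $\cP_{\chi_{|\mu^1|}}$, hence is of the form $\IC(\bar\cO_\mu,\cE)$ with $\bar\cO_\mu\subset\on{supp}\cP_{\chi_{|\mu^1|}}$. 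The crux is therefore to determine, for each $m$, the orbits supporting the composition factors of $\cP_{\chi_m}$ together with their multiplicities, and to match $\cT(\emptyset;\mu^1,\mu^2)$ with the orbit $\cO_\lambda$ for which $(\emptyset;\mu^1,\mu^2)\in\Sigma_\lambda$. This orbit-by-orbit analysis of the composition series — equivalently, of the characteristic variety — of the nearby cycle sheaves $\cP_{\chi_m}$ is the main obstacle, and the reason the statement is only conjectured: Theorem~\ref{Misha} identifies the top constituent $\IC(\Lg_1,\cL_{\chi_m})$ on the spectral side but says nothing about the supports of the remaining constituents. The line of attack would be to carry the local system $\cL_{\chi_m}$ through the Picard--Lefschetz construction of $\cP_{\chi_m}$ from the proof of Theorem~\ref{Misha}, and to constrain the resulting supports using the spiral-induction results of \cite{LY} recalled in Section~\ref{Rep}, in which each $\IC(\bar\cO_\lambda,\cL)$ is realized as a summand of a Hessenberg sheaf $(\pi_{\cL,1})_*\on{IC}(\on{Hess}_{\cL,1},\hat\cL_1)$ whose characteristic variety is accessible.

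Finally, the ``moreover'' assertions say that $\fF(\IC(\cO_\lambda,\bC))$ is the constituent of smallest support in the block of $\lambda$: the triple with $|\nu_0|$ maximal and, in the odd-part case, with the parts of $\mu^1_0$ and of $\mu^2_0$ of opposite parity. Running the inductive step of the second paragraph with $m=|\nu_0|$ maximal — so $\alpha=\nu_0'$ and $\beta$ is the partition with distinct parts $\lambda'_{l+1}>\cdots>\lambda'_k$, whence $\cO_\lambda=\on{Ind}_{\Ll^m_1\subset\Lp^m_1}^{\Lg_1}\cO_{\alpha,\beta}$ and, by Corollary~\ref{coro-ind}, $\IC(\cO_\lambda,\bC)=\on{Ind}_{\Ll^m_1\subset\Lp^m_1}^{\Lg_1}\IC(\cO_{\alpha,\beta},\bC)$ — reduces, via the Springer identification $\fF(\IC(\cO_\alpha,\bC))\leftrightarrow\rho_{\nu_0}$ and the induction hypothesis, to the case where $\lambda$ itself has distinct $\lambda'$ (all gaps $\leq 1$). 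In that case $\IC(\cO_\lambda,\bC)$ occurs, by~\eqref{LY}, in $(\pi_{\cL_{\on{triv}},1})_*\bC$, so $\fF(\IC(\cO_\lambda,\bC))$ occurs in $(\pi_{\cL_{\on{triv}},-1})_*\bC$, which is smooth over $\Lg_1^{rs}$; identifying, via the residual $I_N$-action, which constituent of this Hessenberg cohomology carries the Fourier transform of the trivial local system then pins down $(\mu^1_0,\mu^2_0)$. All of these compatibilities can in addition be cross-checked against the explicit Fourier-transform computations of \cite{CVX1,CVX2}, and the even-part case is entirely analogous.
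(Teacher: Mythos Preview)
The statement you are attempting to prove is labeled \emph{Conjecture} in the paper precisely because the paper does \emph{not} provide a proof; Section~\ref{sec-conj} only formulates the conjecture, checks its compatibility with Corollary~\ref{coro-full supp2}, and relates it to earlier conjectures in \cite{CVX2}. So there is no ``paper's own proof'' to compare against.

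Your proposal is not a proof either, and you say so yourself: you correctly isolate the genuine obstruction, namely the determination of the supports of the composition factors of the twisted nearby cycle sheaves $\cP_{\chi_m}$ (the base case $\nu=\emptyset$). Your inductive reduction via parabolic induction is sound and is in the spirit of the paper's techniques: for $m=|\nu|>0$, combining \eqref{induced IC sheaf}, \eqref{commutativity}, the classical Springer correspondence on the $\mathfrak{gl}(m)$ factor, the induction hypothesis on the $\mathfrak{sl}(N-2m)_1$ factor, and then Corollary~\ref{coro-ind} together with~\eqref{support of induced} does pin down the support as exactly $\bar\cO_\lambda$ and yields an irreducible IC sheaf there. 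The counting you invoke (that $|\Sigma_\lambda|$ matches the number of local systems on $\cO_\lambda$, and that the triples with $\nu=\emptyset$ occur only when $\lambda_i-\lambda_{i+1}\le 2$, in number $2^{f_\lambda-g_\lambda-1}$ matching Corollary~\ref{coro-full supp2}(2)) is also correct, so pigeonhole would finish the bijection \emph{if} the base case were known.

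But the base case is exactly what is missing, and your suggested ``line of attack'' (Picard--Lefschetz with local coefficients plus the spiral induction of \cite{LY}) is a plan, not an argument: Theorem~\ref{Misha} only identifies $\fF(\cP_{\chi_m})$ globally as $\IC(\Lg_1,\cL_{\chi_m})$ and gives no grip on which nilpotent orbit supports each composition factor of $\cP_{\chi_m}$, and nothing in \cite{LY} as quoted in Section~\ref{Rep} supplies that information either. Likewise, in the ``moreover'' part, your reduction leaves the terminal case (all gaps $\le 1$) unresolved: knowing that $\fF(\IC(\cO_\lambda,\bC))$ sits inside $(\pi_{\cL_{\on{triv}},-1})_*\bC$ does not by itself single out the triple $(\mu_0^1,\mu_0^2)$ with parts of opposite parity. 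In short, your outline is a faithful explanation of \emph{why} the paper leaves this as a conjecture, not a proof of it.
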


Note that $\fF(\IC(\cO_\lambda,\cE))$ has full support if and only if $\nu(\delta_1,\ldots,\delta_l)=\emptyset$. Thus we see  that the conjecture is compatible with Corollary~\ref{coro-full supp2}. We also remark that special cases of the conjecture are verified by \cite[Theorems 4.1 and 4.3]{CVX1}.

Let us relate the conjecture above to our previous conjectures in~\cite{CVX2}. In \cite{CVX2} we constructed local systems $E_{i,j}^{2n+1}$ and $\widetilde{E}_{i,j}^{2n+1}$ on $\Lg_1^{rs}$. In terms of the parametrization introduced in this paper, we have
\beqn
\text{$E_{i,j}^{2n+1}=\cT(\emptyset;(2i-j,\,j),(2n+1-2i))$}
\eeqn
\beqn
\text{$\widetilde{E}_{i,j}^{2n+1}=\cT(\emptyset;(2i-1-j,\,j),(2n+2-2i))$}.
\eeqn
Thus we see that Conjecture~\ref{conjecture} applied to $E_{i,j}^{2n+1}$ agrees with Conjectures 6.1 and 6.3 in \cite{CVX2}. Applied to  $\widetilde{E}_{i,j}^{2n+1}$, Conjecture~\ref{conjecture} implies that the supports of $\fF(\IC(\Lg_1,\widetilde{E}_{i,j}^{2n+1}))$ are as follows:
\beqn
\begin{gathered}
\cO_{3^j2^{2i-2j-1}1^{2n+3-4i+j}}\text{ if }4i-j\leq 2n+3,\\
\cO_{3^j2^{2n+2-2i-j}1^{4i-j-2n-3}}\text{ if }2i+j\leq 2n+2\text{ and }4i-j\geq 2n+3\\
\cO_{3^{2n-2i+2}2^{2i+j-2n-2}1^{2i-2j-1}}\text{ if }2i+j\geq 2n+2.
\end{gathered}
\eeqn
Note that the above orbits are all of even dimensional and  each of the even-dimensional orbits appears twice there.

\end{document}